
\documentclass[a4paper,10pt,oneside]{article}




\usepackage{diagbox}
\usepackage{amssymb}
\usepackage{amsmath,bm}
\usepackage{amsmath,amsthm}
\usepackage[pdftex, pdfpagelabels, bookmarks, hyperindex, hyperfigures,
colorlinks, pagebackref]{hyperref}

\usepackage{multirow}
\usepackage{makecell}
\usepackage{graphicx}
\usepackage{subfigure}
\usepackage{float}
\usepackage{appendix}
\numberwithin{equation}{section}
\newtheorem{theorem}{Theorem}[section]

\newtheorem{definition}[theorem]{Definition}
\newtheorem{lemma}[theorem]{Lemma}
\newtheorem{remark}{Remark}[section]



\begin{document}




\title
{
	\Large\bf Robust  weak Galerkin finite element methods  for linear  elasticity
with continuous  displacement trace approximation
	\thanks{This work was supported by National Natural Science Foundation of China (11771312) and Major Research
	Plan of  National Natural Science Foundation of China (91430105).}
}

\author
{
 Gang Chen\thanks{Email: 569615491@qq.com}\\
	{School of Mathematics Sciences,}\\
  {University of Electronic Science and Technology of China,}\\
 {Chengdu 611731, China}\\
 \\
  \quad Xiaoping Xie \thanks{Corresponding author. Email: xpxie@scu.edu.cn}\\
	{School of Mathematics, Sichuan University, Chengdu 610064, China}
}

\date{}
\maketitle


\begin{abstract}
This paper proposes and analyzes a class of new weak Galerkin (WG) finite element methods for 2- and 3-dimensional  linear  elasticity problems. The  methods use discontinuous piecewise-polynomial approximations of degrees  $k(\geq 0)$  for the stress,  $k+1$ for the displacement, and a continuous piecewise-polynomial approximation of degree $k+1$ for the  displacement trace on the inter-element boundaries, respectively.  After the local elimination of unknowns defined in the interior of elements,  the WG methods result in  SPD systems where the unknowns are only the degrees of freedom describing the continuous trace   approximation. We show that the proposed methods are  robust in the sense that the derived a priori  error estimates are  optimal and uniform with respect to the Lam\'{e} constant $\lambda$. Numerical experiments confirm the theoretical results.

Keywords: linear  elasticity; weak Galerkin method;   robust a priori error estimate; strong symmetric stress
\end{abstract}




\section{Introduction.}
\label{}
Let $\Omega\color{black}\subset\color{black}\mathbb{R}^d \ (d=2,3)$ be a polyhedral region with boundary $\partial\Omega={\Gamma_D\cup\Gamma_N}$, where $meas(\Gamma_D)>0$ and $\Gamma_D\cap\Gamma_N=\emptyset$.
 We consider 
the following   linear isotropic elasticity model:
\begin{eqnarray}
\left\{
   \begin{aligned}
 \mathcal{A}\bm{\sigma}-\bm{\epsilon}(\bm{u})&=\bm{0},\text{ in }\Omega, \label{o1}\\
 \nabla\cdot\bm{\sigma}&=\bm{f},\text{ in }\Omega,  \\
 \bm{u}&=\bm{g}_D,\text{ on }\Gamma_D, \\
  \bm{\sigma}\bm{n}&=\bm{g}_N,\text{ on }\Gamma_N.
  \end{aligned}
\right.
 \end{eqnarray}
Here  $\bm{\sigma}:\Omega\rightarrow  \mathbb{R}^{d\times d}_{sym}$ denotes  the symmetric $d\times d$ stress tensor field,   $\bm{u}:\Omega\to \mathbb{R}^d$  the displacement field,  $\bm{\epsilon}(\bm{u})=\left(\nabla \bm{u}+(\nabla \bm{u})^T\right)/2$ the strain tensor,  and $\mathcal{A}\bm{\sigma} \in \mathbb{R}^{d\times d}_{sym}$ the compliance tensor with
\begin{eqnarray}
\mathcal{A}\bm{\sigma}=\frac{1}{2\mu}\left(\bm{\sigma}-\frac{\lambda}{2\mu+d\lambda}tr(\bm{\sigma})I\right),
\end{eqnarray}
where $\lambda> 0,\mu>0$ are the Lam\'e coefficients,  $tr(\bm{\sigma})$ denotes the trace of $\bm{\sigma}$, and $I$ is the $d\times d$ identity matrix.
$\bm{f}$ is the  body force  acting on $\Omega$, $\bm{n}$  is   the unit outward
vector normal to $\Gamma_N$, and
  $\bm{g}_D$ and $\bm{g}_N$   are   the surface displacement on $\Gamma_D$ and the surface traction on $\Gamma_N$, respectively.


It is well-known \cite{Vogelius1983} that conforming finite element methods of displacement types suffer from a performance deterioration, called  Poisson-locking,  as  the material becomes incompressible or, equivalently, the Lam\'e constant $\lambda$ tends to $\infty$.

When using  a mixed finite element method based on  Hellinger-Reissner variational principle to solve the model  (\ref{o1}), the combination of stress  and displacement approximation is required to satisfy  two stability conditions, i.e.  a coercivity condition and an inf-sup condition; see, e.g. \cite{Arnold;Awanou2005, Arnold;Brezzi;Jr1984, Arnold;Falk1988, Arnold;Jr;1984, Arnold.D;Winther.R2002, Arnold.D;Winther.R2003, book1,Brezzi.F;Fortin.M1991, Chen.S2011, Fraejis1965,Hu;Shi2007, Johnson;1978,Stenberg1986,Stenberg1988, Xie.X;Xu.J2011}. These  stability constraints usually lead to complicated construction of finite element combinations, in which the stress finite elements are of many degrees of freedom.
In particular,   when dealing with nearly incompressible materials, one needs 
some more-severe stability condition  for the finite element combination  so as to derive  a uniformly stable mixed method which is free from Poisson-locking.

Due to the relaxation of function continuity,  hybrid stress/strain finite element methods based on generalized variational principles \cite{Pian1964, pian1984rational,Punch-Atluri1984, Pian1995, Reddy-Simo1995, Simo-Rifai1990, xie2004optimization, Zhou-Nie, Zhou-Xie2002} are a class of  special mixed approaches that allow   for piecewise-independent approximation to the  stress solution, and thus   lead to   symmetric and positive definite (SPD) discrete systems of nodal displacements after  the local elimination of the stress unknowns defined in the  interior of the elements.
We refer to  \cite{ Bai2016,Braess1998, Braess-C-Reddy2004,yu2011uniform, Li-Xie-Zhang2017}  for the  stability and error estimation of some robust 4-node hybrid  stress/strain  quadrilateral/rectangular elements    that hold uniformly with respect to the the Lam\'e constant $\lambda$.

 The hybridizable discontinuous Galerkin (HDG) framework, proposed in\cite{Cockburn;unified HDG;2009}  for second order elliptic problems, provides a unifying strategy for hybridization of finite element methods. In the HDG model, the constraint of function continuity on the inter- element boundaries is relaxed by introducing Lagrange multipliers defined on the the inter-element boundaries. Similar to the  hybrid stress/strain finite element methods,  by the local elimination of the unknowns defined in the interior of elements,  the HDG  method  results in   a SPD system where the unknowns are only the globally coupled degrees of freedom describing the introduced Lagrange multipliers. We refer to \cite{projection_based_hdg, cond_super_con_hdg,  Li;hdg;2014}
 for the analysis of several HDG methods   for diffusion equations.

The first HDG method for linear
elasticity was proposed in \cite{Soon2008,Soon;2009} and analyzed in \cite{HDG1}, where strongly symmetric stresses and piecewise-polynomial
approximations of degree $k$ in all variables are used.  The method converges optimally for the  displacement and  the antisymmetric part of the displacement gradient, but sub-optimally for the    stress and the strain, i.e.  the symmetric part of the displacement gradient, with $1/2$- order loss of accuracy.
In  \cite{HDG2} an   HDG method was presented based on a strong symmetric stress formulation, where the piecewise-polynomial approximations of degrees  $k(\geq 1)$,  $k+1$ and $k$ are used for the stress, displacement and the  numerical trace of the displacement, respectively. The method was  shown   to yield optimal convergence for both the displacement and stress approximations with the constants in the upper bounds of the errors depending on $\lambda$; In particular, a suboptimal convergence rate for the stress approximation was shown to hold uniformly   with respective to  $\lambda$.  We refer to \cite{Cockburn;Shi2013, Kabaria;Lew;Cockburn?, Nguyen;Peraire;Cockburn2011,Nguyen;Peraire2012} for several related HDG methods for linear elasticity, nonlinear elasticity and elasto-dynamics. We also
  refer to  \cite{LDG} for a local discontinuous Galerkin (LDG) method with strongly symmetric stresses  for linear elasticity.

Closely related to the HDG framework is the weak Galerkin (WG) method developed in \cite{WG1,WG2} for second-order elliptic problems. The WG method   is designed by using a weakly defined gradient operator over functions with discontinuity, and allows the use of totally discontinuous functions in the finite element procedure.   Similar to the hybrid stress/strain  and HDG methods,  the WG scheme leads to a SPD system through the local elimination of unknowns defined in the interior of elements.
We refer to \cite{WG3, WG4, WG5,MY1,MY2,WG-my-forth, Wang;;Zhang2016} for   applications of the WG method to some other partial  differential equations,  and refer to \cite{Chen-Wang-Wang-Ye2014, Li-Xie2014WG,Li;Xie;2015;HDG, Li;bpx;2014} for fast solvers of WG methods.

%

In a very recent work \cite{elasticity}, 
we developed a  class of WG methods  with strong symmetric stresses  for the model \eqref{o1} on  polygon or polyhedral meshes, where discontinuous
piecewise-polynomial approximations of degrees $k(\geq1)$,  $k+1$ are used for the stress,  the displacement, respectively, and discontinuous piecewise-polynomial approximation  of degree $k$ is used for  the displacement trace on the inter-element boundaries.
Optimal convergence rates for both the displacement and   stress approximations  are obtained which hold uniformly with respective to the Lam\'{e} constant $\lambda$. We note that the methods in  \cite{elasticity}, as well as that in \cite{HDG2},  are not applicable to the   lowest order case  $k=0$.

In this paper, we  shall  propose a class of new weak Galerkin  method  with strong symmetric stresses for the model  (\ref{o1})  on  polygon or polyhedral meshes. We use
 discontinuous piecewise-polynomial approximations of degrees $k(\geq0)$  for the stress,  $k+1$ for the displacement, and a continuous piecewise-polynomial approximation of degree $k+1$ for the  displacement trace on the inter-element boundaries, respectively.  Compared with the WG methods in \cite{elasticity},  the new methods have the following features.
\begin{itemize}
\item  They adopts continuous approximation for the displacement trace, while the methods in   \cite{elasticity} use discontinuous trace approximation.

\item They allow the   lowest order case  $k=0$.

\item  They yield optimal convergence rates for both the displacement and   stress approximations which are uniformly with respective to the Lam\'{e} constant $\lambda$.

\item  After the local elimination of unknowns
defined in the interior of elements,  the unknowns  of the resultant  SPD systems  of the new WG method are only the degrees of freedom describing the continuous piecewise-polynomial approximation of the displacement trace on the inter-element boundaries. Especially, the SPD systems is are of smaller sizes than the corresponding systems of the methods  in \cite{elasticity}.

\end{itemize}



The rest of this paper is arranged   as follows. In Section 2 we introduce notation and      WG  finite element schemes. Section 3 derives stability results of the methods. Sections 4 and 5 are  devoted to the a priori  error estimation for the displacement and   stress approximations, respectively. Finally, Section 6 provides   numerical results.

%
%
%
%
%
%
%
%
%
%
%
%
%
%
%
%
%

Throughout this paper,  we use $a\lesssim b$
 to denote $a\le Cb$, where   $C$ is a positive constant independent of the mesh parameters $h$, $h_T$, $h_E$ and the Lam\'e coefficients $\lambda$ and $\mu$.

\section{WG finite element scheme}
\subsection{Notations and preliminary results}
\label{}
%
%
%

For any bounded domain $\Lambda\color{black}\subset\color{black} \mathbb{R}^s$ $(s=d,d-1)$, let $H^{m}(\Lambda)$ and $H^m_0(\Lambda)$  denote the usual  $m^{th}$-order Sobolev spaces on $\Lambda$, and $\|\cdot\|_{m, \Lambda}$, $|\cdot|_{m,\Lambda}$  denote the norm and semi-norm on these spaces.
We use $(\cdot,\cdot)_{m,\Lambda}$ to denote the inner product of $H^m(\Lambda)$, with $(\cdot,\cdot)_{\Lambda}:=(\cdot,\cdot)_{0,\Lambda}$.
When $\Lambda=\Omega$, we denote $\|\cdot\|_{m }:=\|\cdot\|_{m, \Omega}, |\cdot|_{m}:=|\cdot|_{m,\Omega}$, $(\cdot,\cdot):=(\cdot,\cdot)_{\Omega}$.  In particular, when $\Lambda\in \mathbb{R}^{d-1}$, we use $\langle\cdot,\cdot\rangle_{\Lambda}$ to replace $(\cdot,\cdot)_{\Lambda}$.
We note that bold face fonts will be used for vector (or tensor) analogues of the Sobolev spaces along with vector-valued (or tensor-valued) functions. For an integer $k\ge 0$, $\mathbb{P}_k(\Lambda)$  denotes the set of all \color{black} polynomials \color{black} defined on $\Lambda$ with degree not greater than $k$.   In addition, we introduce the following two spaces:
$$L^2_0(\Lambda):=\{v| v\in L^2(\Lambda):= H^0(\Lambda), (v,1)_{\Lambda}=0\},$$
$$\bm{H}(div,\Lambda):=\{\bm{v}| \bm{v}\in [L^2(\Lambda)]^d,\nabla\cdot\bm{v}\in L^2(\Lambda)\}.$$

Let $\mathcal{T}_h=\bigcup\{T\}$  be a shape regular partition (to be defined later) of the domain $\Omega$ consists of arbitrary polygons, \color{black} such that each (open) boundary edge (or face) belongs either to $\Gamma_D$, or to $\Gamma_N$, and there should be at least two edges belong the interior of $\Gamma_S$ $(S=D,N)$ if $\Gamma_S\neq\emptyset$. \color{black}
We note that $\mathcal{T}_h$ can be a conforming partition or a nonconforming partition which allows hanging nodes.

For any $T\in\mathcal{T}_h$, we let $h_T$ be the infimum of the diameters of circles (or spheres) containing $T$ and denote the mesh size $h:=\max_{T\in\mathcal{T}_h}h_T$.
\color{black} 
An edge (or face) $E$ on the boundary  $\partial T$ of $T$ is called a proper edge (or face) if the endpoints (or vertexes) of the edge (or face) $E$ are the nodes of $\mathcal{T}_h$ and no other nodes of $\mathcal{T}_h$ are on $E$. See Figure \ref{fig0} for example, $EF$, $FH$ and $HI$ are proper edges, while $EH$, $FI$ and $EI$ are not. \color{black}
Let $\mathcal{E}_h=\bigcup\{E\}$ be the union of all  proper  edges (faces) of $T\in\mathcal{T}_h$.  We denote by $h_E$ the length of  edge $E$ if $d=2$ and the  infimum of the diameters of circles  containing face $E$ if $d=3$.
For all   $T\in\mathcal{T}_h$ and $E\in\mathcal{E}_h$, we denote by $ \bm{n}_T $ and $\bm{n}_E$   the  unit outward  normal vectors along $\partial T$ and  $E$, respectively.

\begin{figure}[!h]
\centering
\includegraphics[height=5cm ,width=7cm,angle=0]{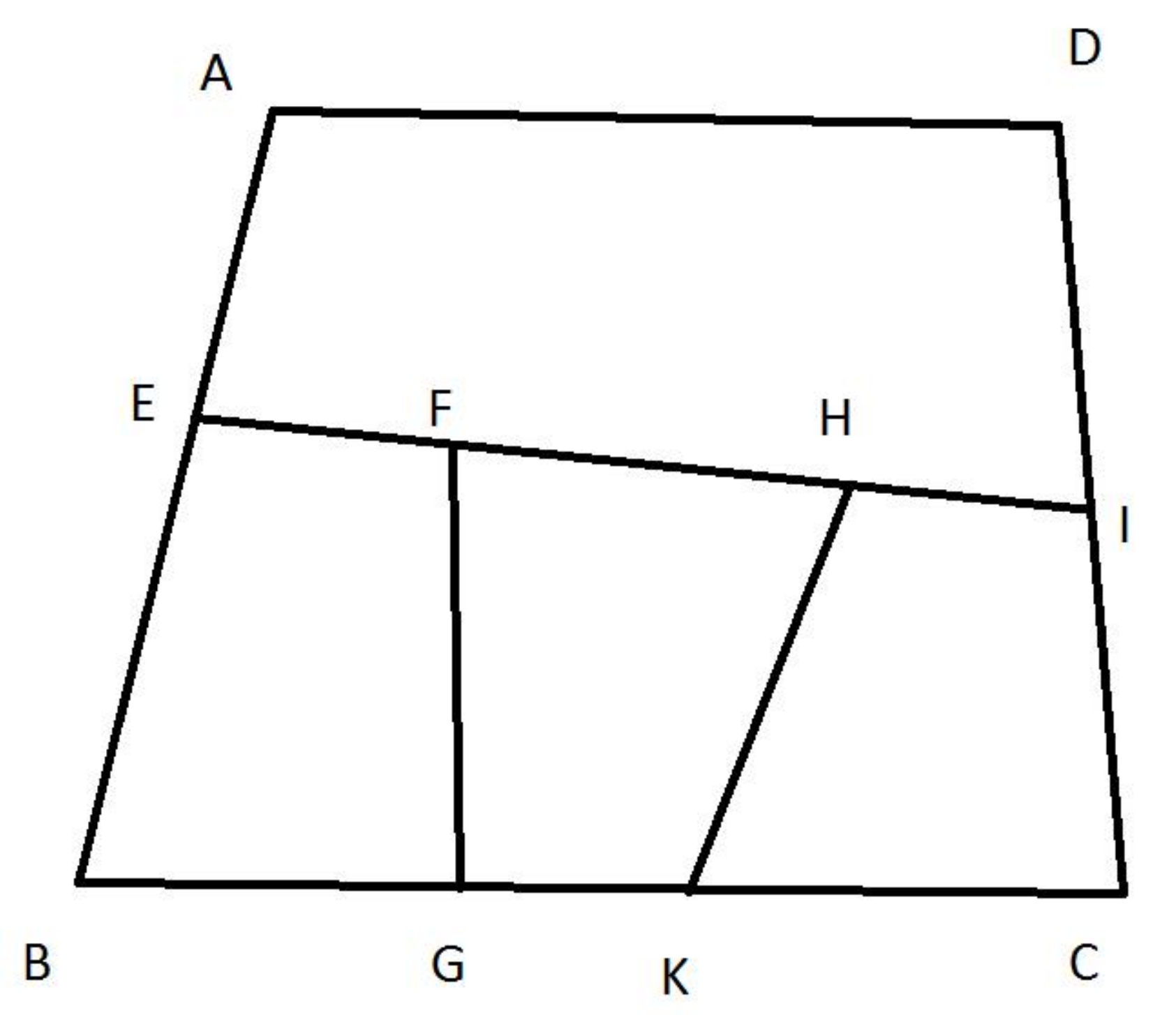}
\caption{\label{fig0} Demonstrating of proper edges in 2D }
\end{figure}

The partition $\mathcal{T}_h$ is called shape regular in the sense that   assumptions \textbf{M1}-\textbf{M2} hold true.
\begin{itemize}

\item \textbf{M1} (Star-shaped elements). There exists a positive constant $\theta_*$ such that the following holds: for each element $T\in\mathcal{T}_h$, there exists a point $M_T\in T$ such that $T$ is star-shaped with respect to every point in the circle (or  sphere) of center $M_T$ and radius $\theta_* h_T$.

\item \textbf{M2} (Edges or faces). There exist a positive constant $l_*$ such that: every element $T\in\mathcal{T}_h$, the distance between any two vertexes (include the hang nodes) is $\ge l_* h_T$.
%
%
%

\end{itemize}


%
When $d=2$,
 for every $T\in\mathcal{T}_h$,
 we connect $M_T$ and $T$'s vertexes (including hang nodes) to get a set of triangles, $w(T)$;
 when $d=3$, for every $T, T^{'}\in\mathcal{T}_h$ and every face $E\subset\partial T\bigcap \partial T^{'}$, we choose any vertex $A$ on $E$ and   connect $A$ to  the rest of $E$'s vertexes (including hang nodes) to get a set of
 triangles, $v(E)$, then we connect $M_T$ and every $v(E)$ ($E\subset\partial T$) to get a set of tetrahedrons, $w(T)$.
Set
 \begin{equation}\label{T^*_h}
 \mathcal{T}^*_h:=\bigcup_{T\in\mathcal{T}_h}w(T).
 \end{equation}
  We note that $\mathcal{T}^*_h$ is shape regular due to \textbf{M1} and \textbf{M2}.
Let $\mathcal{E}^{*}_h$ be the union of all the edges (faces) of $\mathcal{T}_h^*$.
 Notice that when $d=2$, it holds $\mathcal{E}^{*}_h=\mathcal{E}_h$.

For any nonnegative integer $j$, define
$$ \mathbb{P}_{j}(\mathcal{T}_h):=\left\{ q_h: \ q_h|_T\in \mathbb{P}_{j}(T), \ \forall T\in  \mathcal{T}_h\right\}.$$
The space $\mathbb{P}_{j}(\mathcal{T}_h^*)$ is defined similarly.

By using the inverse inequality and trace inequality on the shape regular simplex mesh $\mathcal{T}^*_h$, it is easy to get the following inverse inequality and trace inequality on shape regular polygon mesh $\mathcal{T}_h$.

\begin{lemma} \label{lemma21}
For all $T\in\mathcal{T}_h$ and any given nonnegative integer $j$, the following inequalities hold true: 
\begin{eqnarray}
|q_h|_{1,T} & \lesssim&  h_T^{-1}\|q_h\|_{0,T}, \quad \forall q_h\in \mathbb{P}_j(T),\label{inverse}\\
\|q\|_{0,\partial T}&\lesssim& h_T^{-1/2}\|q\|_{0,T}+h_T^{1/2}|{q}|_{1,{T}},\quad \forall q\in H^1(T). \label{partial1}
\end{eqnarray}
\end{lemma}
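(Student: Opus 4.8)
The plan is to deduce both inequalities by transferring the classical inverse and trace estimates from the auxiliary simplicial mesh $\mathcal{T}^*_h$ to the polygonal element $T$. The starting point is the remark following \eqref{T^*_h}: $\mathcal{T}^*_h$ is shape regular because of \textbf{M1}--\textbf{M2}, so every simplex $K\in w(T)$ is a non-degenerate triangle ($d=2$) or tetrahedron ($d=3$) whose shape-regularity constant depends only on $\theta_*$ and $l_*$, and whose diameter $h_K$ is comparable to $h_T$; in particular $h_T\lesssim h_K$, since \textbf{M2} forces some edge of $K$ to have length at least $l_* h_T$. I would then invoke, on each such $K$, the standard inverse inequality for polynomials of degree $\le j$ and the standard trace inequality for $H^1(K)$ functions, whose constants are then independent of $h$ and of $T$.

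For \eqref{inverse}, I would take $q_h\in\mathbb{P}_j(T)$, observe that $q_h|_K\in\mathbb{P}_j(K)$ for every $K\in w(T)$, and apply the simplex-wise inverse inequality to get $|q_h|_{1,K}\lesssim h_K^{-1}\|q_h\|_{0,K}\lesssim h_T^{-1}\|q_h\|_{0,K}$. Since $w(T)$ partitions $T$ up to a null set and $q_h$ is smooth on $T$, squaring and summing over $K\in w(T)$ would give
\[
|q_h|_{1,T}^2=\sum_{K\in w(T)}|q_h|_{1,K}^2\lesssim h_T^{-2}\sum_{K\in w(T)}\|q_h\|_{0,K}^2=h_T^{-2}\|q_h\|_{0,T}^2,
\]
which is \eqref{inverse}.

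For \eqref{partial1}, I would first note that $\partial T$ is the essentially disjoint union of its proper edges when $d=2$, respectively of the triangles of $v(E)$ over the proper faces $E\subset\partial T$ when $d=3$, and that by the construction of $w(T)$ each such boundary piece $S$ is precisely one face of exactly one simplex $K_S\in w(T)$, the correspondence $S\mapsto K_S$ enumerating $w(T)$ bijectively. For $q\in H^1(T)$, the trace inequality on $K_S$ gives $\|q\|_{0,S}^2\lesssim h_{K_S}^{-1}\|q\|_{0,K_S}^2+h_{K_S}\,|q|_{1,K_S}^2\lesssim h_T^{-1}\|q\|_{0,K_S}^2+h_T\,|q|_{1,K_S}^2$. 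Summing over all boundary pieces $S$, hence over all $K\in w(T)$, I would obtain
\[
\|q\|_{0,\partial T}^2=\sum_{S}\|q\|_{0,S}^2\lesssim h_T^{-1}\sum_{K\in w(T)}\|q\|_{0,K}^2+h_T\sum_{K\in w(T)}|q|_{1,K}^2=h_T^{-1}\|q\|_{0,T}^2+h_T\,|q|_{1,T}^2,
\]
and \eqref{partial1} would follow upon taking square roots.

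The step I expect to be the main obstacle is the geometric one underlying the first paragraph: a clean verification that \textbf{M1}--\textbf{M2} force the fan simplices of $w(T)$ to be shape regular with diameter comparable to $h_T$. One has to rule out thin slivers issuing from $M_T$ — this is controlled by star-shapedness with respect to $B(M_T,\theta_* h_T)$, which bounds from below the distance of $M_T$ to $\partial T$ and hence the heights of the fan simplices — and degenerate bases on $\partial T$ — controlled by \textbf{M2}, which in 3D must also keep the auxiliary triangles of $v(E)$ non-degenerate. Once this geometric fact is granted, which is precisely the content of the assertion in the text that $\mathcal{T}^*_h$ is shape regular, the remainder is just the elementary simplex estimates together with the additivity of the $L^2$- and $H^1$-quantities over the partition $w(T)$ used above.
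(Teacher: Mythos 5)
Your argument is correct and is exactly the route the paper intends: the text proves Lemma 2.1 only by the one-line remark that the classical inverse and trace inequalities on the shape-regular simplicial refinement $\mathcal{T}^*_h$ transfer to $T$ by summing over the fan simplices $w(T)$, which is precisely what you carry out. Your additional discussion of why \textbf{M1}--\textbf{M2} make the simplices of $w(T)$ shape regular with $h_K\sim h_T$ fills in the geometric fact that the paper simply asserts after \eqref{T^*_h}, so nothing is missing.
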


For  every $T\in\mathcal{T}_h$, let  $\gamma_T:=\frac{h_T}{\varrho_{T,max}}$ be the chunkiness parameter of $T$, where $\varrho_{T,max}$ denotes the  supremum
of the radius of a sphere  with respect to which $T$ is star-shaped. Then, in view of \textbf{M1},
we have  $2\leq \gamma_T \le \frac{h_T}{\theta_* h_T}=\theta_*^{-1}$, i.e. $\gamma_T$ is independent of $h_T$. Thus, by  (Lemma 4.3.8, \cite{book0}) we obtain the following conclusion.

\begin{lemma} \label{lemma22}For all $T\in\mathcal{T}_h$ and $v\in H^{m}(T)$ with $m\ge1$, there exists $I_{m-1}v\in P_{m-1}(T)$ 
such that
\begin{eqnarray}
|v-I_{m-1}v|_{s,T}\lesssim h_T^{m-s}|v|_{m,T}, \quad \text{ for } 0\leq s\leq m.\label{est2.3} 
\end{eqnarray}
\end{lemma}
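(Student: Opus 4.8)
The plan is to take $I_{m-1}v$ to be a suitable averaged Taylor polynomial and then to quote the Bramble--Hilbert estimate of \cite{book0}, checking only that its constant depends on $T$ solely through the chunkiness parameter $\gamma_T$, which has already been bounded uniformly just before the statement. More precisely, by assumption \textbf{M1} the element $T$ is star-shaped with respect to the ball $B_T$ of center $M_T$ and radius $\theta_* h_T$, with $B_T\subset T$ and $\mathrm{diam}(T)\simeq h_T$ (the latter because $h_T$ is the infimum of the diameters of circles/spheres containing $T$). For $v\in H^m(T)$ with $m\ge 1$ one may therefore define $I_{m-1}v\in\mathbb{P}_{m-1}(T)$ to be the Taylor polynomial of order $m$ of $v$ averaged over $B_T$ in the sense of (Definition 4.1.3, \cite{book0}); this average is well defined precisely because $m\ge 1$ and $T$ is star-shaped with respect to $B_T$.

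For $0\le s\le m-1$, (Lemma 4.3.8, \cite{book0}) then yields
\begin{eqnarray*}
|v-I_{m-1}v|_{s,T}\le C\, h_T^{m-s}|v|_{m,T},
\end{eqnarray*}
where the constant $C$ depends only on $d$, $m$, $s$ and the chunkiness parameter $\gamma_T=h_T/\varrho_{T,max}$. Since $I_{m-1}v\in\mathbb{P}_{m-1}(T)$, all of its $m$-th order derivatives vanish, so the remaining endpoint case $s=m$ is trivial: $|v-I_{m-1}v|_{m,T}=|v|_{m,T}=h_T^{0}|v|_{m,T}$. Finally, by \textbf{M1} we have $2\le\gamma_T\le\theta_*^{-1}$, so $C$ is bounded by a quantity depending only on $d$, $m$, $s$ and $\theta_*$, hence independent of $h_T$ and of the particular element $T$; this gives the asserted estimate \eqref{est2.3}.

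The argument has no real obstacle: its entire substance is the classical polynomial-approximation theory on star-shaped domains from \cite{book0}. The only points requiring attention are (i) matching the star-shapedness hypothesis used in \cite{book0} with the geometric content of \textbf{M1}, i.e. verifying $B_T\subset T$ and $\mathrm{diam}(T)\simeq h_T$, and (ii) handling the endpoint $s=m$ separately, since the cited estimate is stated only for $s\le m-1$. The uniformity of the hidden constant with respect to $h$ and $\lambda$ — which is the sole reason for stating the lemma in this form — comes for free once one observes, as is done immediately before the statement, that $\gamma_T$ is bounded independently of $h_T$.
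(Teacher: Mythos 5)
Your proposal is correct and follows essentially the same route as the paper: the paper likewise observes that \textbf{M1} bounds the chunkiness parameter $\gamma_T$ uniformly ($2\le\gamma_T\le\theta_*^{-1}$) and then invokes (Lemma 4.3.8, \cite{book0}) for the averaged Taylor polynomial. Your additional remarks — verifying the star-shapedness hypothesis and treating the endpoint $s=m$ separately via the vanishing of $m$-th derivatives of a degree-$(m-1)$ polynomial — are correct details that the paper leaves implicit.
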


In Appendix \ref{A1} we construct a  modified Scott-Zhang interpolation operator, ${\mathcal{I}}_{k+1}$, and derive some properties.

\begin{lemma}\label{lemma23}  For any integer $k\geq0$, there exists an interpolation operator ${\mathcal{I}}_{k+1}: H^1(\Omega)\to H^1(\Omega)\cap \mathbb{P}_{k+1}(\mathcal{T}_h^*)$,  such that 
the following properties hold:  
\begin{eqnarray}
&&\langle{\mathcal{I}}_{k+1}{v},1\rangle_{\Gamma}=\langle{v},1 \rangle_{\Gamma}  \text{ with } \Gamma=\Gamma_D, \Gamma_N,\quad \forall {v}\in H^1(\Omega), \label{2.5}\\
&&{\mathcal{I}}_{k+1}{v}|_{\Gamma_D}={0}, \quad \forall {v}\in H^1_D(\Omega):=\left\{v\in H^1(\Omega):\ v|_{\Gamma_D}=0 \right\},\label{2.6}\\
&&|{v}-{\mathcal{I}}_{k+1}v|_{s,T}\lesssim h_T^{m-s}|v|_{m,S(T)},\quad \forall {v}\in H^{m}(\Omega), \ T\in\mathcal{T}_h^*,\label{2.7}\\
&&\|{v}-{\mathcal{I}}_{k+1}{v}\|_{0,\partial T}\lesssim h_T^{m-1/2}|{v}|_{m,S(T)},\quad \forall {v}\in H^{m}(\Omega), \  T\in\mathcal{T}_h^*,\label{2.8}
\end{eqnarray}
where  $s $, $ m$ are integers satisfying $1\leq m $ and $0\le s\le m\le k+2,$ and 
\begin{eqnarray}
S(T):=\left\{
\begin{array}{ll}
S^{'}(T), &\text{ if } k+1\ge d,\\
\bigcup\limits_{\overline{T^{'}}\bigcap
{S^{'}(T)}\neq\emptyset, T^{'}\in\mathcal{T}_h^* }\overline{T^{'}}, &\text{ if } k+1< d
\end{array}
\right.
\end{eqnarray}
with
\begin{eqnarray}
S^{'}(T):=\bigcup\limits_
{ \overline{T^{'}}\bigcap
\overline{T}\neq\emptyset, T^{'}\in\mathcal{T}_h^*} \overline{T^{'}}.
\end{eqnarray}
\end{lemma}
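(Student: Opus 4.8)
The plan is to build ${\mathcal{I}}_{k+1}$ as a modified Scott–Zhang quasi-interpolation operator on the simplicial refinement $\mathcal{T}_h^*$, and then to repair it by a low-dimensional correction so that the two integral/trace constraints \eqref{2.5}–\eqref{2.6} hold exactly. First I would recall the classical Scott–Zhang construction on $\mathcal{T}_h^*$: for each Lagrange node $a$ of the degree-$(k+1)$ conforming finite element space $H^1(\Omega)\cap\mathbb{P}_{k+1}(\mathcal{T}_h^*)$, one fixes a simplex (or a face on $\partial\Omega$, chosen inside $\Gamma_D$ or $\Gamma_N$ when $a$ lies on the boundary — this is where assumption \textbf{M2} and the ``at least two edges in the interior of $\Gamma_S$'' hypothesis are used) and defines the nodal value by an $L^2$-dual functional supported on that simplex/face. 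Because boundary nodes on $\Gamma_D$ use functionals supported on faces lying in $\Gamma_D$, the standard argument gives \eqref{2.6} for free, and the standard stability/locality estimates give \eqref{2.7}–\eqref{2.8} with the patch $S(T)$ exactly as defined (the enlarged patch in the case $k+1<d$ is needed because then the dual functionals live on lower-dimensional faces whose supporting simplices need not touch $T$). The local approximation estimate \eqref{2.7} follows in the usual way from $W^{m,2}$-stability of the operator, polynomial preservation on $S(T)$, and the Bramble–Hilbert lemma (Lemma~\ref{lemma22}) on the shape-regular mesh $\mathcal{T}_h^*$; \eqref{2.8} then follows from \eqref{2.7} by the trace inequality \eqref{partial1}.

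The nontrivial point is enforcing \eqref{2.5}: the raw Scott–Zhang interpolant $\widetilde{\mathcal{I}}_{k+1}v$ need not have the correct mean on $\Gamma_D$ and on $\Gamma_N$. I would correct this by adding a bubble-type function: pick two boundary nodes, one in the interior of $\Gamma_D$ and one in the interior of $\Gamma_N$ (available by the standing mesh assumption), let $\phi_D,\phi_N$ be the associated nonnegative degree-$(k+1)$ nodal basis functions on $\mathcal{T}_h^*$, and set
\begin{equation*}
{\mathcal{I}}_{k+1}v:=\widetilde{\mathcal{I}}_{k+1}v+\alpha_D\,\phi_D+\alpha_N\,\phi_N,
\qquad
\alpha_\Gamma:=\frac{\langle v-\widetilde{\mathcal{I}}_{k+1}v,1\rangle_{\Gamma}}{\langle\phi_\Gamma,1\rangle_{\Gamma}},\ \Gamma=\Gamma_D,\Gamma_N.
\end{equation*}
Since $\phi_D$ vanishes on $\Gamma_N$ and $\phi_N$ on $\Gamma_D$, the two corrections decouple and \eqref{2.5} holds; since the chosen $\Gamma_D$-node is in the \emph{interior} of $\Gamma_D$, $\phi_D$ vanishes at all nodes of $\Gamma_D$ used to impose the boundary value? — more precisely one must also ensure $\phi_D|_{\Gamma_D}$ does not destroy \eqref{2.6}; this forces choosing the correction node for $\Gamma_D$ to lie away from $\Gamma_D$ in the support sense, i.e. one uses a node on an edge emanating \emph{into} $\Omega$ from $\Gamma_D$ (hence the hypothesis that at least two proper edges meet the interior of $\Gamma_S$). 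With that choice $\phi_D|_{\Gamma_D}\equiv0$ is false in general, so instead one takes the correction to be supported near $\Gamma_N$ only for the $\Gamma_D$-mean as well, using that $\int_{\Gamma_D}1\,ds$ and $\int_{\Gamma_N}1\,ds$ can be matched by a single interior-supported function whose trace on $\Gamma_D$ is a fixed polynomial orthogonalized against the nodal functionals — the clean way is: do the $\Gamma_D$-correction with a function supported in a fixed one-element-wide strip whose trace on $\Gamma_D$ is identically zero (a face bubble on an interior face of a boundary simplex), which exists precisely by \textbf{M2}.

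The main obstacle, then, is this simultaneous bookkeeping: producing a fixed, $h$-independent-in-shape correction space of dimension $2$ on which the linear map $(\alpha_D,\alpha_N)\mapsto$ (means on $\Gamma_D,\Gamma_N$) is invertible with inverse bounded independently of $h$, while this space is (i) contained in $H^1(\Omega)\cap\mathbb{P}_{k+1}(\mathcal{T}_h^*)$, (ii) vanishing on $\Gamma_D$ so as not to spoil \eqref{2.6}, and (iii) localized so that adding it degrades \eqref{2.7}–\eqref{2.8} only by a constant. I expect the invertibility and $h$-uniform boundedness of the $2\times2$ correction matrix to follow from a scaling argument on a single shape-regular reference patch together with $\operatorname{meas}(\Gamma_S)>0$, and the degradation of the approximation bounds to be controlled because $|\alpha_\Gamma|\lesssim h^{-1/2}\|v-\widetilde{\mathcal{I}}_{k+1}v\|_{0,\Gamma}\lesssim h^{m-1}|v|_{m,\Gamma\text{-patch}}$ and $\|\phi_\Gamma\|_{s,T}\lesssim h^{(d-1)/2-s+?}$ combine with the right powers of $h_T$; the remaining details are the routine scaling computations deferred to Appendix~\ref{A1}.
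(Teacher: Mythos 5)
Your overall framework (a Scott--Zhang operator on $\mathcal{T}_h^*$ with boundary nodes assigned dual functionals supported on faces inside $\Gamma_D$ or $\Gamma_N$, giving \eqref{2.6}--\eqref{2.8} by the standard stability, polynomial-invariance and Bramble--Hilbert argument, with \eqref{2.8} from \eqref{2.7} and the trace inequality) matches the paper's Appendix~\ref{A1}. The genuine gap is in how you enforce \eqref{2.5}. Concentrating the whole mean mismatch $\langle v-\widetilde{\mathcal{I}}_{k+1}v,1\rangle_{\Gamma}$ into a two-dimensional correction $\alpha_D\phi_D+\alpha_N\phi_N$ cannot work, for two reasons. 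First, it destroys locality: $\alpha_\Gamma$ depends on $v$ along all of $\Gamma$, so on the elements carrying $\operatorname{supp}\phi_\Gamma$ the error is no longer controlled by $|v|_{m,S(T)}$ for a fixed local patch $S(T)$, which is exactly what \eqref{2.7}--\eqref{2.8} assert. Second, the powers of $h$ do not close: $\langle\phi_\Gamma,1\rangle_\Gamma\sim h^{d-1}$ while $|\langle v-\widetilde{\mathcal{I}}_{k+1}v,1\rangle_\Gamma|\lesssim h^{m-1/2}|v|_{m}$, so $|\alpha_\Gamma|\lesssim h^{m-d+1/2}|v|_m$ and $|\alpha_\Gamma\phi_\Gamma|_{s,T}\lesssim h^{m-s+(1-d)/2}|v|_m$, which falls short of the required $h^{m-s}$ by $h^{-1/2}$ for $d=2$ and $h^{-1}$ for $d=3$; your closing paragraph leaves exactly these exponents unresolved. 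There is also an internal inconsistency in your handling of \eqref{2.6}: a correction function whose trace vanishes identically on $\Gamma_D$ cannot adjust $\langle\cdot,1\rangle_{\Gamma_D}$ at all, so the "interior face bubble" escape route is a dead end; the actual resolution is that for $v\in H^1_D(\Omega)$ the mismatch on $\Gamma_D$ is already zero (both $v$ and $\widetilde{\mathcal{I}}_{k+1}v$ vanish there), so no precaution is needed.

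The paper sidesteps all of this by making the mean-value enforcement local: on each boundary face $E\subset\Gamma_S$ when $k+1\ge d$, or on each patch $w(A_i)$ of faces forming a partition of $\Gamma_S$ when $k+1<d$, one nodal degree of freedom is sacrificed and its interpolation condition is replaced by $\langle\mathcal{I}_{k+1}v,1\rangle_E=\langle v,1\rangle_E$ (resp.\ the sum over the patch). Summing these identities over the faces or patches yields \eqref{2.5} exactly, while each modified nodal value is still determined by $v$ on a bounded patch of elements --- this is precisely why $S(T)$ is enlarged by one layer of elements when $k+1<d$ --- and a local scaling argument shows the modified value obeys the same bound $|\mathcal{I}_{k+1}v(A_i)|\lesssim\sum_{r}h_T^{r-d/2}|v|_{r,S(T)}$ as the unmodified ones, so \eqref{2.7}--\eqref{2.8} survive with optimal powers. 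If you wish to keep a "correct afterwards" structure, you must distribute the correction over one basis function per boundary face so that each coefficient sees only the local face mismatch; doing so consistently reproduces the paper's construction.
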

\begin{proof} The proofs of \eqref{2.5}-\eqref{2.7} are given in Appendix \ref{A1}, and the estimate \eqref{2.8} follows from \eqref{2.7} and Lemma \ref{lemma21}.
\end{proof}

\color{black}

\subsection{Discrete weak gradient/divergence operators }

We follow \cite{WG4} to introduce the definitions of the discrete weak gradient/divergence operators.  For $T\in\mathcal{T}_h$, 
denote by $\mathcal{V}(T)$ a space of weak functions on $T$ with
\begin{eqnarray}
\mathcal{V}(T)=\{v=\{v_i,v_b\}:v_i\in L^2(T),v_b\in H^{1/2}(\partial T)\}.
\end{eqnarray}

Let $\bm{G}(T) \subset  \bm{H}(div,T)$ be a local finite dimensional vector space. We define the discrete weak gradient operator $\nabla_{w,G,T}: \mathcal{V}(T)\rightarrow \bm{G}(T)$  as follows.
\begin{definition} For all $v\in \mathcal{V}(T)$,  its  discrete weak gradient   $\nabla_{w,G,T}v\in \bm{G}(T)$ satisfies the equation
\begin{eqnarray}
(\nabla_{w,G,T}v,\bm{\tau})_T=-(v_i,\nabla\cdot \bm{\tau})_T+\langle v_b,\bm{\tau}\cdot  \bm{n}_T\rangle_{\partial T}, \quad\forall \bm{\tau}\in \bm{G}(T).
\end{eqnarray}
\end{definition}
Then we define the global  discrete weak gradient operator $\nabla_{w,G}$ with $$\nabla_{w,G}|_T=\nabla_{w,G,T},\forall T\in\mathcal{T}_h.$$
 In particular, if $\bm{G}(T)=[\mathbb{P}_r(T)]^d$, we write $\nabla_{w,r}:=\nabla_{w,G}$. For a  vector   $\bm{v}=(v_1,\cdots,v_d)^T\in [\mathcal{V}(T)]^d$,  we define the weak gradient $\nabla_{w,r}\bm{v} $  as
 $$\nabla_{w,r}\bm{v}=(\nabla_{w,r}v_1,\dotsm\nabla_{w,r}v_d)^T.$$

%

Let $\mathcal{W}(T)$ be a space of weak vector-valued functions on  $T$ with
\begin{eqnarray}
\mathcal{W}(T):=\{\bm{v}=\{\bm{v}_i,\bm{v}_b\}:\bm{v}_i\in [L^2(T)]^d,\bm{v}_b\cdot\bm{n}_T \in H^{-1/2}(\partial T)\},
\end{eqnarray}
and 

   ${G}(T)\subset H^1(T)$ be a local finite dimensional space.  We define the discrete weak divergence operator $\nabla_{w,G,T}\cdot: \mathcal{W}(T)\rightarrow  {G}(T)$  as follows.
\begin{definition} For any $\bm{v}\in \mathcal{W}(T)$,  its  discrete weak divergence   $\nabla_{w,G,T}\cdot\bm{v}\in {G}(T)$ satisfies the equation
\begin{eqnarray}
(\nabla_{w,G,T}\cdot \bm{v},\tau)_T=-(\bm{v}_i,\nabla{\tau})_T+\langle \bm{v}_b\cdot\bm{n}_T ,{\tau}\rangle_{\partial T}\quad \forall \tau\in G(T).
\end{eqnarray}
\end{definition}
Then we define  the global  discrete weak divergence operator  $\nabla_{w,G}\cdot$ with $\nabla_{w,G}\cdot|_T=\nabla_{w,G,T}\cdot,\forall T\in\mathcal{T}_h$. In particular, if ${G}(T)=\mathbb{P}_r(T)$, we write $\nabla_{w,r}\cdot:=\nabla_{w,G}\cdot$.
\color{black}

\subsection{New WG finite element schemes}

For any $T\in\mathcal{T}_h$, $ E\in\mathcal{E}_h$ and any   nonnegative integer $j$, let $Q_{j}^i: L^2(T)\rightarrow \mathbb{P}_{j}(T)$ and $Q_{j}^b: L^2(E)\rightarrow \mathbb{P}_j(E)$ be the usual $L^2$ projection operators. Vector or tensor  analogues of $Q_{j}^i$ and $Q_{j}^b$ are denoted by  $\bm Q_{j}^i$ and $\bm Q_{j}^b$, respectively.

For any   integer $k\ge 0$, we introduce  the following  finite dimensional spaces:
\begin{eqnarray}
  \bm{U}_{hi}&:=&\{\bm{v}_{hi}\in [L^2(\Omega)]^d:\ \bm{v}_{hi}|_T\in [\mathbb{P}_{k+1}(T)]^{d},\ \forall T\in\mathcal{T}_h\},\\
  \bm{U}_{hb}&:=&\{\bm{v}_{hb}\in [C^0(\mathcal{E}^{*}_h)]^d:\  \bm{v}_{hb}|_E\in [\mathbb{P}_{k+1}(E)]^d,\ \forall E\in\mathcal{E}^{*}_h\},    \\
 \bm{U}^{\tilde{\bm{g}}}_{hb}&:=&\{\bm{v}_{hb}\in \bm{U}_{hb}:\ \bm{v}_{hb}|_{\Gamma_D}= \bm{\mathcal{I}}_{k+1}\tilde{\bm{g}}\}    \text{ with }\tilde{\bm{g}}=\bm{g}_D,\bm{0},\\
 \bm{\Sigma}_{hi}&:=&\{\bm{\tau}_{hi}\in [L^2(\Omega)]^{d\times d}:  \bm{\tau}_{hi}^T= \bm{\tau}_{hi} \text{  and } \bm{\tau}_{hi}|_T\in [\mathbb{P}_k(T)]^{d\times d},\forall T\in\mathcal{T}_h\}.\nonumber\\
 \end{eqnarray}
 For simplicity we set
\begin{equation}
 \bm{U}_{h}:=\bm{U}_{hi}\times \bm{U}_{hb},\quad   \bm{U}_{h}^{\tilde{\bm{g}}}:= \bm{U}_{hi}\times \bm{U}_{hb}^{\tilde{\bm{g}}}    \text{ with }\tilde{\bm{g}}=\bm{g}_D,\bm{0}.
 \end{equation}
In  the case of $k+1<d$,  the following space is also required  for stabilization:
 \begin{eqnarray}
 {\Sigma}^{tr}_{hb}&:=&\{{\tau}^{tr}_{hb}\in C^0(\mathcal{E}^{*}_h/\partial\Omega): {\tau}^{tr}_{hb}|_E\in \mathbb{P}_{1}(E), \forall E\in\mathcal{E}^{*}_h/\partial\Omega\}.
\end{eqnarray}
%
Then the stabilized WG (new WG) finite element discretization of the elasticity model \eqref{o1} is given in the following two cases:

(i) Case of $k+1<d$:

 Find  $\bm{\sigma}_{hi}\in \bm{\Sigma}_{hi}, \sigma_{hb}^{tr}\in \Sigma_{hb}^{tr}, \bm{u}_h:=\{\bm{u}_{hi},\bm{u}_{hb}\}\in  \bm{U}_{h}^{\bm{g}_D}$ such that
\begin{align}
 {a}_h(\bm{\sigma}_{hi},\bm{\tau}_{hi})- {b}_h(\bm{\tau}_{hi},\bm{u}_{h})
 +{z}_h(\bm{\sigma}_{hi},\sigma^{tr}_{hb};\bm{\tau}_{hi},\tau^{tr}_{hb})=0,\quad& \forall \bm{\tau}_{hi}\in \bm{\Sigma}_{hi}, \tau_{hb}^{tr}\in \Sigma_{hb}^{tr}, \label{wg1new}\\
 -{b}_h(\bm{\sigma}_{hi},\bm{v}_{h})-{s}_h(\bm{u}_h,\bm{v}_h)=(\bm{f},\bm{v}_{hi})-\langle \bm{g}_N,\bm{v}_{hb}\rangle_{\Gamma_N},\quad &\forall \bm{v}_h:=\{\bm{v}_{hi},\bm{v}_{hb}\}\in  \bm{U}_{h}^{\bm{0}};\label{wg2new}
 \end{align}

(ii) Case of $k+1\ge d$:

Find  $\bm{\sigma}_{hi}\in \bm{\Sigma}_{hi},  \bm{u}_h:=\{\bm{u}_{hi},\bm{u}_{hb}\}\in  \bm{U}_{h}^{ \bm{g}_D}$ such that
\begin{align}
 {a}_h(\bm{\sigma}_{hi},\bm{\tau}_{hi})- {b}_h(\bm{\tau}_{hi},\bm{u}_{h})
 =0, \quad& \forall \bm{\tau}_{hi}\in \bm{\Sigma}_{hi}, \label{wg1}\\
 -{b}_h(\bm{\sigma}_{hi},\bm{v}_{h})-{s}_h(\bm{u}_h,\bm{v}_h)=(\bm{f},\bm{v}_{hi})-\langle \bm{g}_N,\bm{v}_{hb}\rangle_{\Gamma_N},   &\forall \bm{v}_h\in  \bm{U}_{h}^{ \bm{0}}.\label{wg2}
 \end{align}
Here
\begin{eqnarray}
 {a}_h(\bm{\sigma}_{hi},\bm{\tau}_{hi})&:=&(\mathcal{A}\bm{\sigma}_{hi},\bm{\tau}_{hi}),\\
 {b}_h(\bm{\tau}_{hi},\bm{v}_{h})&:=&(\bm{\tau}_{hi},\nabla_{w,k}\bm{v}_{h}),\\
 {s}_h(\bm{u}_h,\bm{v}_h)&:=&\langle \alpha          (\bm{u}_{hi}-\bm{u}_{hb}),\bm{v}_{hi}-\bm{v}_{hb}\rangle_{\partial\mathcal{T}_h},\label{227}\\
  {z}_h(\bm{\sigma}_{hi},\sigma^{tr}_{hb};\bm{\tau}_{hi},\tau^{tr}_{hb})&:=&
 \langle\beta (tr(\bm{\sigma}_{hi})-\sigma^{tr}_{hb}),
                                                         tr(\bm{\tau}_{hi})- \tau^{tr}_{hb}
  \rangle_{\partial\mathcal{T}_h/\partial\Omega},
\end{eqnarray}
and the parameter $\alpha$ and $\beta$ are taken as
\begin{eqnarray}
\alpha|_E&:=&2\mu h_E^{-1} \quad \text{for any } E\in\mathcal{E}_h ,\\
\beta|_E&:=&(2\mu)^{-1}h_E \quad \text{for any } E\in\mathcal{E}_h/\partial\Omega.
 \end{eqnarray}




\begin{remark}
We note that the stabilization term $z_h(\cdot,\cdot;\cdot,\cdot)$ is the key to   locking-free performance when $k+1<d$. 
\end{remark}
\begin{remark}\label{rem2.2}
The WG methods in   \cite{elasticity} are based on the following formulations with   $k\geq 1$. Find  $\bm{\sigma}_{hi}\in \bm{\Sigma}_{hi},  \bm{u}_h=\{\bm{u}_{hi},\bm{u}_{hb}\}\in \tilde{\bm{U}}_{h}^{ \bm{g}_D} := \bm{U}_{hi}\times \tilde{\bm{U}}_{hb}^{{\bm{g}}_D} $ such that
\begin{align}
 {a}_h(\bm{\sigma}_{hi},\bm{\tau}_{hi})- {b}_h(\bm{\tau}_{hi},\bm{u}_{h})
 =0, \quad& \forall \bm{\tau}_{hi}\in \bm{\Sigma}_{hi}, \label{wg1*}\\
 -{b}_h(\bm{\sigma}_{hi},\bm{v}_{h})-\tilde{s}_h(\bm{u}_h,\bm{v}_h)=(\bm{f},\bm{v}_{hi})-\langle \bm{g}_N,\bm{v}_{hb}\rangle_{\Gamma_N},   &\forall \bm{v}_h\in   \tilde{\bm{U}}_{h}^{ \bm{0}} .\label{wg2*}
 \end{align}
where
$$ \tilde{\bm{U}}_{hb}^{\tilde{\bm{g}}}:=  \{\bm{v}_{hb}\in [L^2(\mathcal{E}_h)]^d:\  \bm{v}_{hb}|_{\Gamma_D}= \bm Q^b_{k}\tilde{\bm{g}}, \ \bm{v}_{hb}|_E\in [\mathbb{P}_{k}(E)]^d , \forall  E\in\mathcal{E}_h \}
$$
 with $\tilde{\bm{g}}=\bm{g}_D,\bm{0}$, and
 $$  \tilde{s}_h(\bm{u}_h,\bm{v}_h):=\langle \alpha          (\bm Q^b_{k}\bm{u}_{hi}-\bm{u}_{hb}),\bm Q^b_{k}\bm{v}_{hi}-\bm{v}_{hb}\rangle_{\partial\mathcal{T}_h}.$$
 In fact, by following the routine of analysis  in \cite{elasticity},  the case of $k=0$ for \eqref{wg1*}-\eqref{wg2*}  also works if $ \tilde{\bm{U}}_{hb}^{\tilde{\bm{g}}}$ in this case  is modified as
 \begin{eqnarray*}
 \tilde{\bm{U}}_{hb}^{\tilde{\bm{g}}}:=  \{\bm{v}_{hb}\in [L^2(\mathcal{E}_h)]^d:\  \bm{v}_{hb}|_{\Gamma_D}= \bm Q^b_{1}\tilde{\bm{g}}, \ \bm{v}_{hb}|_E\in [\mathbb{P}_{1}(E)]^d , \forall  E\in\mathcal{E}_h \}.
    \end{eqnarray*}
    As a result,  for  the WG scheme  \eqref{wg1*}-\eqref{wg2*} with $k\geq 0$,  the following error estimates hold true (\cite{elasticity}):
\begin{eqnarray*}
\|\bm{\sigma}-\bm{\sigma}_{hi}\|_0&\lesssim&  h^{k+1}(|\bm{\sigma}|_{k+1}+\mu|\bm{u}|_{k+2}), \\
\|\nabla\bm{u}-\nabla_h\bm{u}_{hi}\|_0&\lesssim& h^{k+1}(\mu^{-1}|\bm{\sigma}|_{k+1}+|\bm{u}|_{k+2} )\label{in47g}.
\end{eqnarray*}
 In addition,    under  the regularity assumption (\ref{rg}), it holds
\begin{eqnarray*}
\|\bm{u}-\bm{u}_{hi}\|_0\lesssim h^{k+2}(\mu^{-1}|\bm{\sigma}|_{k+1}+|\bm{u}|_{k+2} ).
 \end{eqnarray*}

\color{black}

\end{remark}

To establish error estimates for the proposed WG schemes, we need the following approximation and stability results  for the $L^2$-projections $Q^i_j$ and $Q^b_j$ with nonnegative integer $j$.


\color{black}
\begin{lemma} \cite{elasticity} \label{lemma2.6}  Let $m$ be an integer with $1\le m\le j+1$. For all $T\in\mathcal{T}_h$, $E\in\mathcal{E}_h$, it holds
\begin{eqnarray}
\small
\|Q_{j}^iv\|_{0,T}&\le&\|v\|_{0,T},\forall v\in L^{2}(T),\label{pr2}\\
\|Q_{j}^bv\|_{0,E}&\le&\|v\|_{0,E},\forall v\in L^{2}(E),\label{pr3}\\
\|v-Q_{j}^bv\|_{0,\partial T}&\lesssim& h_T^{m-1/2}|v|_{m,T},\forall v\in H^{m}(T),\label{pr111}\\
|v-Q_{j}^iv|_{s,T}&\lesssim& h_T^{m-s}|v|_{m,T},\forall v\in H^{m}(T),\ 0\le s \le m,\label{pr1}\\
\|\nabla^s(v-Q_{j}^iv)\|_{0,\partial T}&\lesssim& h_T^{m-s-1/2}|v|_{m,T},\forall v\in H^{m}(T), \color{red}1\color{black}\le s+1 \le m. \label{pr11}
\end{eqnarray}

\end{lemma}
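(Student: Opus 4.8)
The plan is to derive the five estimates in order, using the polynomial approximation bound of Lemma \ref{lemma22} together with the inverse and trace inequalities of Lemma \ref{lemma21} as the only tools. The stability bounds \eqref{pr2} and \eqref{pr3} are immediate: since $Q_j^i v$ is the $L^2(T)$-orthogonal projection of $v$ onto $\mathbb{P}_j(T)$ and $Q_j^b v$ is the $L^2(E)$-orthogonal projection of $v$ onto $\mathbb{P}_j(E)$, the Pythagorean identity gives $\|Q_j^i v\|_{0,T}\le\|v\|_{0,T}$ and $\|Q_j^b v\|_{0,E}\le\|v\|_{0,E}$.

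For the interior estimate \eqref{pr1}, I would first note that the hypothesis $m\le j+1$ forces $I_{m-1}v\in\mathbb{P}_{m-1}(T)\subseteq\mathbb{P}_j(T)$, where $I_{m-1}$ is the operator of Lemma \ref{lemma22}, so that $Q_j^i I_{m-1}v=I_{m-1}v$. The case $s=0$ then follows from the best-approximation property, $\|v-Q_j^i v\|_{0,T}\le\|v-I_{m-1}v\|_{0,T}\lesssim h_T^m|v|_{m,T}$. For $1\le s\le m$ I would use the triangle inequality
\[
|v-Q_j^i v|_{s,T}\le|v-I_{m-1}v|_{s,T}+|Q_j^i(I_{m-1}v-v)|_{s,T},
\]
bound the first term directly by Lemma \ref{lemma22}, and for the second term --- noting $Q_j^i(I_{m-1}v-v)\in\mathbb{P}_j(T)$ --- apply the inverse inequality \eqref{inverse} $s$ times and then \eqref{pr2} and the $s=0$ case to get $|Q_j^i(I_{m-1}v-v)|_{s,T}\lesssim h_T^{-s}\|I_{m-1}v-v\|_{0,T}\lesssim h_T^{m-s}|v|_{m,T}$.

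The boundary estimates then reduce to \eqref{pr1} via the trace inequality \eqref{partial1}. For \eqref{pr111}, since $Q_j^i v|_E\in\mathbb{P}_j(E)$ for every edge (face) $E\subset\partial T$, the best-approximation property of $Q_j^b$ gives $\|v-Q_j^b v\|_{0,E}\le\|v-Q_j^i v\|_{0,E}$; summing over $E\subset\partial T$, then applying \eqref{partial1} to $v-Q_j^i v\in H^1(T)$ and \eqref{pr1} with $s=0,1$ yields $\|v-Q_j^b v\|_{0,\partial T}\lesssim h_T^{-1/2}\|v-Q_j^i v\|_{0,T}+h_T^{1/2}|v-Q_j^i v|_{1,T}\lesssim h_T^{m-1/2}|v|_{m,T}$. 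For \eqref{pr11}, the condition $s+1\le m$ ensures $v-Q_j^i v\in H^{s+1}(T)$, so applying \eqref{partial1} componentwise to $\nabla^s(v-Q_j^i v)$ and then \eqref{pr1} with exponents $s$ and $s+1$ gives $\|\nabla^s(v-Q_j^i v)\|_{0,\partial T}\lesssim h_T^{-1/2}|v-Q_j^i v|_{s,T}+h_T^{1/2}|v-Q_j^i v|_{s+1,T}\lesssim h_T^{m-s-1/2}|v|_{m,T}$.

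The only genuinely delicate point is the $s\ge1$ part of \eqref{pr1}: the $L^2$-projection is not quasi-optimal with respect to higher-order seminorms, so the detour through $I_{m-1}v$ combined with the inverse inequality is essential, and a direct best-approximation argument would not work. All remaining steps are routine once Lemmas \ref{lemma21}--\ref{lemma22} are available; since this lemma is quoted from \cite{elasticity}, one may alternatively just refer to that reference for the details.
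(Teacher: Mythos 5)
Your argument is correct. The paper itself gives no proof of Lemma \ref{lemma2.6} --- it is quoted from \cite{elasticity} --- and your derivation is the standard one: contraction of $L^2$-projections for \eqref{pr2}--\eqref{pr3}, the detour through $I_{m-1}v$ (which lies in $\mathbb{P}_j(T)$ because $m\le j+1$) combined with the inverse inequality \eqref{inverse} for the higher-order seminorms in \eqref{pr1}, and the trace inequality \eqref{partial1} to pass to the boundary estimates \eqref{pr111} and \eqref{pr11}. You also correctly flag the only delicate point, namely that quasi-optimality of $Q_j^i$ fails in $|\cdot|_{s,T}$ for $s\ge1$, so the comparison with $I_{m-1}v$ is genuinely needed there.
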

%
%

\section{Stability}

We first show the following inf-sup stability condition holds for the bilinear form ${b}_h(\cdot,\cdot)$.
\begin{theorem} \label{th31} Denote $\bm{\epsilon}_{w,k}(\bm{v}_{h}):=\left(\nabla_{w,k} \bm{v}_{h}+(\nabla_{w,k} \bm{v}_{h})^T\right)/2$. Then, for all $\bm{v}_{h}\in\bm{U}_{h}$,  it holds
\begin{eqnarray}
\sup_{\bm{\tau}_{hi}\in \bm{\Sigma}_{hi}}\frac{{b}_h(\bm{\tau}_{hi},\bm{v}_{h})}{\|\bm{\tau}_{hi}\|_0}\ge \|\bm{\epsilon}_{w,k}(\bm{v}_{h})\|_0.
\end{eqnarray}
\end{theorem}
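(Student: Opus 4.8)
The plan is to prove the inequality by exhibiting one explicit competitor in the supremum that already attains the bound, so that this is in fact an exact inf-sup with constant $1$. First I would record the elementary algebraic fact that for a \emph{symmetric} tensor $\bm{\tau}$ and an arbitrary matrix $\bm{M}$ the Frobenius contraction satisfies $\bm{\tau}:\bm{M}=\bm{\tau}:\tfrac12(\bm{M}+\bm{M}^T)$. Applying this pointwise with $\bm{M}=\nabla_{w,k}\bm{v}_h$ and using $\bm{\tau}_{hi}^T=\bm{\tau}_{hi}$ gives
\begin{eqnarray*}
{b}_h(\bm{\tau}_{hi},\bm{v}_h)=(\bm{\tau}_{hi},\nabla_{w,k}\bm{v}_h)=(\bm{\tau}_{hi},\bm{\epsilon}_{w,k}(\bm{v}_h)),\qquad\forall\,\bm{\tau}_{hi}\in\bm{\Sigma}_{hi}.
\end{eqnarray*}

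Second, I would observe the relevant closure property of the stress space. By construction $\nabla_{w,k}\bm{v}_h$ is assembled component-wise from the discrete weak gradients $\nabla_{w,k}v_i\in[\mathbb{P}_k(T)]^d$, hence on each $T\in\mathcal{T}_h$ the matrix $\nabla_{w,k}\bm{v}_h$ belongs to $[\mathbb{P}_k(T)]^{d\times d}$; therefore its symmetrization $\bm{\epsilon}_{w,k}(\bm{v}_h)=\tfrac12(\nabla_{w,k}\bm{v}_h+(\nabla_{w,k}\bm{v}_h)^T)$ is a symmetric, piecewise-$\mathbb{P}_k$ tensor field, i.e. $\bm{\epsilon}_{w,k}(\bm{v}_h)\in\bm{\Sigma}_{hi}$. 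Thus it is an admissible choice of $\bm{\tau}_{hi}$.

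Third, I would simply take $\bm{\tau}_{hi}=\bm{\epsilon}_{w,k}(\bm{v}_h)$. If $\bm{\epsilon}_{w,k}(\bm{v}_h)=\bm 0$ the claimed inequality is trivial, so assume otherwise. Then the identity from the first step yields ${b}_h(\bm{\epsilon}_{w,k}(\bm{v}_h),\bm{v}_h)=(\bm{\epsilon}_{w,k}(\bm{v}_h),\bm{\epsilon}_{w,k}(\bm{v}_h))=\|\bm{\epsilon}_{w,k}(\bm{v}_h)\|_0^2$, while $\|\bm{\tau}_{hi}\|_0=\|\bm{\epsilon}_{w,k}(\bm{v}_h)\|_0$, so the quotient for this particular tensor equals $\|\bm{\epsilon}_{w,k}(\bm{v}_h)\|_0$; taking the supremum over all of $\bm{\Sigma}_{hi}$ can only increase it, which is exactly the asserted bound.

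I do not expect any genuine obstacle here: the statement is a sharp inf-sup with unit constant, and the only points needing (minor) care are the contraction identity $\bm{\tau}:\bm{M}=\bm{\tau}:\tfrac12(\bm{M}+\bm{M}^T)$ for symmetric $\bm{\tau}$ and the closure property $\mathrm{sym}(\nabla_{w,k}\bm{v}_h)\in\bm{\Sigma}_{hi}$, both immediate from the definitions of $\bm{\Sigma}_{hi}$ and of the component-wise weak gradient. The substantive work of the stability analysis — coercivity of ${a}_h$ on the kernel, control of the stabilization terms ${s}_h$ and ${z}_h$, and the norm equivalences underlying $\lambda$-robustness — will appear in the lemmas that follow, not in this one.
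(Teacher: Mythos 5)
Your proof is correct and is exactly the paper's argument (which it states in one line): since $\bm{\epsilon}_{w,k}(\bm{v}_h)$ is a symmetric piecewise-$\mathbb{P}_k$ tensor it lies in $\bm{\Sigma}_{hi}$, and testing $b_h$ with it, using the symmetry of the test tensor to replace $\nabla_{w,k}\bm{v}_h$ by its symmetric part, realizes the bound with constant $1$. You have merely spelled out the details the paper leaves implicit; nothing is missing.
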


\begin{proof} The conclusion follows from the fact that $\bm{\epsilon}_{w,k} (\bm{v}_{h})\in \bm{\Sigma}_{hi}$ for any $\bm{v}_{h}\in\bm{U}_{h}$.
\end{proof}

To derive  stability   conditions for the bilinear form $a_h(\cdot,\cdot)$, we need the following lemma.

\begin{lemma} \label{LBB} For all ${p}_{hi}\in \mathbb{P}_k(\mathcal{T}_h)\bigcap L^2(\Omega)$ if $\Gamma_N\not=\emptyset$, and all   $p_{hi}\in \mathbb{P}_k(\mathcal{T}_h)\bigcap L^2_0(\Omega)$ if $\Gamma_N=\emptyset$, it holds
\begin{eqnarray}\label{phi3.2}
\|p_{hi}\|_0^2\lesssim
 \left\{ \begin{array} {ll}
 \sum\limits_{E\in\mathcal{E}_h/\partial\Omega}h_E\|[p_{hi}]\|^2_{0,E}
          +\left[\sup\limits_{\bm{v}_{hi}\in \bm{X}_{hi}}\frac{(\nabla\cdot\bm{v}_{hi},p_{hi})}
             {|\bm{v}_{hi}|_1}\right]^2 & \text{if } k+1<d,\\
\left[\sup\limits_{\bm{v}_{hi}\in \bm{X}_{hi}}\frac{(\nabla\cdot\bm{v}_{hi},p_{hi})}
             {|\bm{v}_{hi}|_1}\right]^2 & \text{if } k+1\geq d,
\end{array}\right.
\end{eqnarray}
where 
\begin{eqnarray*}
\bm{X}_{hi}:= \{\bm{v}_{hi}\in [H^1_D(\Omega)]^d:\bm{v}_{hi}|_E\in [\mathbb{P}_{k+1}(E)]^d,\forall E\in\mathcal{E}_h\}. 
\end{eqnarray*}
\end{lemma}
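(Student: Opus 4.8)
The plan is to combine the classical surjectivity of the divergence operator with a Fortin-type interpolation into $\bm X_{hi}$. Because $meas(\Gamma_D)>0$ and, when $\Gamma_N\neq\emptyset$, $\Gamma_D\neq\partial\Omega$, the divergence maps $[H^1_D(\Omega)]^d$ onto $L^2(\Omega)$ --- and onto $L^2_0(\Omega)$ when $\Gamma_N=\emptyset$ --- with a bounded right inverse (see, e.g., \cite{book1}). So for the given $p_{hi}$ there is $\bm w\in[H^1_D(\Omega)]^d$ with $\nabla\cdot\bm w=p_{hi}$ and $\|\bm w\|_1\lesssim\|p_{hi}\|_0$, and it then suffices to test the supremum in \eqref{phi3.2}, call it $\mathcal S$, against a good discrete surrogate of $\bm w$.

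That surrogate would be $\Pi_h\bm w$, where $\Pi_h\colon[H^1_D(\Omega)]^d\to\bm X_{hi}$ is built from the modified Scott--Zhang operator $\bm{\mathcal I}_{k+1}$ of Lemma~\ref{lemma23} --- which already supplies $\bm X_{hi}$-conformity, the homogeneous $\Gamma_D$ condition via \eqref{2.6}, the $H^1$-stability $|\bm{\mathcal I}_{k+1}\bm w|_1\lesssim\|\bm w\|_1$, and the estimate $\sum_T h_T^{-1}\|\bm w-\bm{\mathcal I}_{k+1}\bm w\|_{0,\partial T}^2\lesssim|\bm w|_1^2$ from \eqref{2.8} --- corrected by (i) interior element bubbles in $[H^1_0(T)]^d$, which $\bm X_{hi}$ admits freely since it does not constrain element interiors, so that $(\bm w-\Pi_h\bm w,\bm q)_T=0$ for all $\bm q\in[\mathbb P_{k-1}(T)]^d$; and (ii), \emph{only when $k+1\ge d$}, by face bubbles on the simplices of $\mathcal T^*_h$ (together with the necessary global adjustment of the continuous trace) so that $\langle(\bm w-\Pi_h\bm w)\cdot\bm n_E,q\rangle_E=0$ for all interior faces $E$ and all $q\in\mathbb P_k(E)$. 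Step (ii) is exactly where $k+1\ge d$ enters: the bubble on a facet of a $d$-simplex has degree $d$, so it belongs to $\mathbb P_{k+1}$ only for $k+1\ge d$, and the resulting global trace-matching problem is solvable precisely in that regime; for $k+1<d$ no such correction is available and an interior-face normal-moment defect remains. All corrections are $H^1$-stable by scaling, so $|\Pi_h\bm w|_1\lesssim\|\bm w\|_1\lesssim\|p_{hi}\|_0$, and since (i) does not alter traces, $\sum_T h_T^{-1}\|\bm w-\Pi_h\bm w\|_{0,\partial T}^2\lesssim|\bm w|_1^2$ persists in the case $k+1<d$.

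I would then take $\bm v_{hi}=\Pi_h\bm w$ in $\mathcal S$. An elementwise integration by parts, using the continuity of any $\bm v_{hi}\in\bm X_{hi}$ and $\bm v_{hi}|_{\Gamma_D}=\bm 0$, gives for every such $\bm v_{hi}$
\[(\nabla\cdot\bm v_{hi},p_{hi})=-(\bm v_{hi},\nabla_h p_{hi})+\sum_{E\in\mathcal E_h/\partial\Omega}\langle\bm v_{hi}\cdot\bm n_E,[p_{hi}]\rangle_E+\sum_{E\subset\Gamma_N}\langle\bm v_{hi}\cdot\bm n,p_{hi}\rangle_E,\]
and the same identity holds with $\bm w$ in place of $\bm v_{hi}$, its left-hand side then being $\|p_{hi}\|_0^2$. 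Subtracting, property (i) removes the $\nabla_h p_{hi}$ term (since $\nabla_h p_{hi}|_T\in[\mathbb P_{k-1}(T)]^d$) and the $\Gamma_N$ terms cancel by the analogous treatment of the $\Gamma_N$-faces in $\Pi_h$, leaving
\[\|p_{hi}\|_0^2=(\nabla\cdot\Pi_h\bm w,p_{hi})+\sum_{E\in\mathcal E_h/\partial\Omega}\langle(\bm w-\Pi_h\bm w)\cdot\bm n_E,[p_{hi}]\rangle_E.\]
The first term on the right is at most $|\Pi_h\bm w|_1\,\mathcal S\lesssim\|p_{hi}\|_0\,\mathcal S$; when $k+1\ge d$ the second vanishes by (ii), and dividing by $\|p_{hi}\|_0$ gives \eqref{phi3.2}. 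When $k+1<d$, Cauchy--Schwarz on each $E$ together with $\sum_{E\in\mathcal E_h/\partial\Omega}h_E^{-1}\|(\bm w-\Pi_h\bm w)\cdot\bm n_E\|_{0,E}^2\lesssim\sum_T h_T^{-1}\|\bm w-\Pi_h\bm w\|_{0,\partial T}^2\lesssim|\bm w|_1^2\lesssim\|p_{hi}\|_0^2$ controls the second term by $\|p_{hi}\|_0\big(\sum_{E\in\mathcal E_h/\partial\Omega}h_E\|[p_{hi}]\|_{0,E}^2\big)^{1/2}$; dividing by $\|p_{hi}\|_0$ and squaring then yields \eqref{phi3.2}.

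The main obstacle is the construction and analysis of $\Pi_h$: realizing at once $\bm X_{hi}$-conformity (continuity of the single degree-$(k+1)$ trace across the polygonal faces --- and, in three dimensions, across the lower-dimensional skeleton --- plus the $\Gamma_D$ condition), $H^1$-stability, the volume-moment identity, and, for $k+1\ge d$, the interior-face normal-moment identities (a genuinely global solvability question), together with the analogous but more delicate bookkeeping on $\Gamma_N$. The enlarged patches $S(T)$ of Lemma~\ref{lemma23} in the case $k+1<d$ are designed precisely so that the conforming part of this construction still goes through, while the interior-face normal-moment defect that then survives --- which low-degree bubbles cannot remove --- is exactly what the stabilization $\sum_E h_E\|[p_{hi}]\|_{0,E}^2$ in \eqref{phi3.2} is there to absorb.
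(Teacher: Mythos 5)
Your overall strategy (continuous surjectivity of the divergence plus a Fortin operator into $\bm X_{hi}$, then elementwise integration by parts) is different from the paper's, which instead reduces \eqref{phi3.2} to known discrete Stokes inf-sup results: for $k+1\ge d$ it invokes the Scott--Vogelius pair $\mathbb{P}_{k+1}/\mathbb{P}^{dis}_k$ on a \emph{barycentric refinement} of the auxiliary simplicial mesh $\mathcal{T}_h^*$ (admissible because $\bm X_{hi}$ leaves element interiors unconstrained), and for $k+1<d$ it uses the MINI element on $\mathcal{T}_h^*$ together with the Oswald-type averaging operator $\mathcal{Q}_1^*$ of Karakashian--Pascal, whose error is controlled exactly by the jump term $\sum_E h_E\|[p_{hi}]\|_{0,E}^2$. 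Unfortunately your route, as written, assumes away the hardest step. In case $k+1\ge d$ you require $\langle(\bm w-\Pi_h\bm w)\cdot\bm n_E,q\rangle_E=0$ for \emph{all} $q\in\mathbb{P}_k(E)$ on every interior face while keeping $H^1$-conformity and degree $k+1$. A facet bubble of a $d$-simplex has degree $d$, so the local corrections it generates span only $b_E\,\mathbb{P}_{k+1-d}(E)$, i.e.\ $\dim\mathbb{P}_{k+1-d}(E)$ degrees of freedom per face against the $\dim\mathbb{P}_k(E)$ moments you must match; the deficit cannot be made up locally. Globally, the existence of such a Fortin operator is equivalent to the inf-sup stability of the continuous-$\mathbb{P}_{k+1}$/discontinuous-$\mathbb{P}_k$ Stokes pair on the given mesh, which is known to \emph{fail} on general triangulations in the range you claim (e.g.\ $d=2$, $k=1$ or $k=2$: Scott--Vogelius stability on general 2D meshes requires velocity degree $\ge 4$). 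So the assertion that "the resulting global trace-matching problem is solvable precisely in that regime $k+1\ge d$" is false without the special mesh structure (barycentric refinement) that the paper deliberately introduces.

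There is a second, smaller gap in your $k+1<d$ case: after integration by parts a boundary term $\langle(\bm w-\Pi_h\bm w)\cdot\bm n,\,p_{hi}\rangle_{\Gamma_N}$ survives. You claim it cancels "by the analogous treatment of the $\Gamma_N$-faces in $\Pi_h$," but in this regime you have explicitly renounced face corrections (the facet bubble has degree $d>k+1$), and the jump stabilization in \eqref{phi3.2} runs only over \emph{interior} edges. A crude bound via the trace inequality gives a contribution of order $\|p_{hi}\|_0\,|\bm w|_1\lesssim\|p_{hi}\|_0^2$ with an uncontrolled constant, which cannot be absorbed into the left-hand side. To repair the argument you would either need a genuine moment-preserving correction on $\Gamma_N$ (which is what the modified Scott--Zhang conditions (4a)--(4c) of Appendix A only partially provide, at the level of a single mean value per patch), or you should abandon the integration-by-parts identity altogether and argue, as the paper does, directly at the level of discrete Stokes pairs.
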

\begin{proof} \color{black} From Theorem \ref{thmB.3} in Appendix \ref{B1} we know that
  the  Stokes  elements that are stable in $[H^1_0(\Omega)]^d\times L^2_0(\Omega)$  are also stable in  $[H^1_D(\Omega)]^d\times L^2(\Omega)$. 

If $k+1\ge d$, by applying   the stable    Stokes-elements ($\mathbb{P}_{k+1}/\mathbb{P}^{dis}_k$) by  \cite{Scott-Vogelius1985, Qin1994, Zhang.S2005,SV} on the barycenter refined  mesh of \color{black} $\mathcal{T}_h^*$,  the desired  estimate \eqref{phi3.2} follows directly.

If $k+1<d$, we shall make use of the MINI element  $\mathbb{P}_1^b/\mathbb{P}_1$
 on the mesh $\mathcal{T}_h^*$  with $\mathbb{P}_1^b=\mathbb{P}_1\oplus \{bubbles\}$.  Note that in this case we have $k\le 1$. From \cite{adapt}, there exists an   interpolation operator $\mathcal{Q}^*_{1}: \mathbb{P}_{1}(\mathcal{T}^*_h) \rightarrow \mathbb{P}_{1}(\mathcal{T}^*_h)\bigcap H^1(\Omega)$ such that,
for all $q_{hi}\in \mathbb{P}_{1}(\mathcal{T}^*_h)$, 
\begin{eqnarray}
\sum_{T\in\mathcal{T}_h^*}\|q_{hi}-\mathcal{Q}^*_{1}q_{hi}\|^2_{0,T}
\lesssim \sum_{E\in\mathcal{E}^*_h/\partial\Omega}h_E^{1/2}\|[q_{hi}]\|^2_{0,E}.
\end{eqnarray}
  Then, for all ${p}_{hi}\in \mathbb{P}_k(\mathcal{T}_h)\bigcap L^2(\Omega)$ if $\Gamma_N\not=\emptyset$, and all   $p_{hi}\in \mathbb{P}_k(\mathcal{T}_h)\bigcap L^2_0(\Omega)$ if $\Gamma_N=\emptyset$, we have
\begin{eqnarray*}
\|p_{hi}\|_0^2&\lesssim&\|p_{hi}-\mathcal{Q}_{1}^{*} p_{hi}\|^2_0+\|\mathcal{Q}_{1}^{*} p_{hi}\|^2_0\nonumber\\
              &\lesssim&\|p_{hi}-\mathcal{Q}_{1}^{*} p_{hi}\|^2_0
          +\left[\sup_{\bm{v}_{hi}\in [\mathbb{P}^b_1(\mathcal{T}_h^*)\cap H_D^1(\Omega)]^d}\frac{(\nabla\cdot\bm{v}_{hi},\mathcal{Q}_{1}^{*} p_{hi})}
             {|\bm{v}_{hi}|_1}\right]^2\nonumber\\
          &\lesssim&\|p_{hi}-\mathcal{Q}_{1}^{*} p_{hi}\|^2_0
          +\left[\sup_{\bm{v}_{hi}\in [\mathbb{P}^b_1(\mathcal{T}_h^*)\cap H_D^1(\Omega)]^d}\frac{(\nabla\cdot\bm{v}_{hi},p_{hi})}
             {|\bm{v}_{hi}|_1}\right]^2\nonumber\\
             &&+\left[\sup_{\bm{v}_{hi}\in [\mathbb{P}^b_1(\mathcal{T}_h^*)\cap H_D^1(\Omega)]^d}\frac{(\nabla\cdot\bm{v}_{hi},\mathcal{Q}_{1}^{*} p_{hi}-p_{hi})}
             {|\bm{v}_{hi}|_1}\right]^2\nonumber\\
   &\lesssim&\|p_{hi}-\mathcal{Q}_{1}^{*} p_{hi}\|^2_0
          +\left[\sup_{\bm{v}_{hi}\in [\mathbb{P}^b_1(\mathcal{T}_h^*)\cap H_D^1(\Omega)]^d}\frac{(\nabla\cdot\bm{v}_{hi},p_{hi})}
             {|\bm{v}_{hi}|_1}\right]^2\nonumber\\
              &\lesssim&\sum_{E\in\mathcal{E}_h^*/\partial\Omega}h_E \|[p_{hi}]\|^2_{0,E}
          +\left[\sup_{\bm{v}_{hi}\in \bm{X}_{hi}}\frac{(\nabla\cdot\bm{v}_{hi},p_{hi})}
             {|\bm{v}_{hi}|_1}\right]^2\nonumber\\
              &\lesssim&\sum_{E\in\mathcal{E}_h/\partial\Omega}h_E \|[p_{hi}]\|^2_{0,E}
          +\left[\sup_{\bm{v}_{hi}\in \bm{X}_{hi}}\frac{(\nabla\cdot\bm{v}_{hi},p_{hi})}
             {|\bm{v}_{hi}|_1}\right]^2,
\end{eqnarray*}
which completes the proof.
%
\end{proof}


For any $\bm{\tau}\in \mathbb{R}^{d\times d}$, let $\bm{\tau}_D:=\bm{\tau}-\frac{1}{d}tr(\bm{\tau})I$ denote its deviatoric tensor. Then we easily have  the following identities:
for all $\bm{\sigma},\bm{\tau}\in [L^2(\Omega)]^{d\times d}$ and $\bm{v}\in [H^1(\Omega)]^d$,
\begin{eqnarray}
(\mathcal{A}\bm{\sigma},\bm{\tau})&=&\frac{1}{2\mu}(\bm{\sigma}_D,\bm{\tau}_D)+\frac{1}{d(d\lambda+2\mu)}(tr(\bm{\sigma}),tr(\bm{\tau})),\label{D1}\\
\|\bm{\tau}\|^2_0&=&\|\bm{\tau}_D\|_0^2+\frac{1}{d}\|tr(\bm{\tau})\|^2_0,\label{D2}\\
\nabla\cdot\bm{v}I&=&d(\nabla\bm{v}-(\nabla\bm{v})_D),\label{D3}\\
(\bm{\sigma}_D,\bm{\tau})&=&(\bm{\sigma},\bm{\tau}_D).\label{D4}
\end{eqnarray}

In light of the relations \eqref{D1}-\eqref{D4} and Lemma \ref{LBB}, we   can prove the following stability  results.

\begin{theorem} \label{th34} For all $\bm{\sigma}_{hi},\bm{\tau}_{hi}\in \bm{\Sigma}_{hi}$, it holds the continuity condition
\begin{eqnarray}\label{continuity-a}
a_h(\bm{\sigma}_{hi},\bm{\tau}_{hi})\le\frac{1}{2\mu}\|\bm{\sigma}_{hi}\|_0\|\bm{\tau}_{hi}\|_0.
\end{eqnarray}
Moreover, it holds the coercivity condition
\begin{eqnarray}\label{stability-a}
\|\bm{\sigma}_{hi}\|_0^2\lesssim
 \left\{ \begin{array} {l}
\mu a_h(\bm{\sigma}_{hi},\bm{\sigma}_{hi})+\mu\|\alpha^{-1/2}(\bm{\sigma}_{hi}\bm{n}-\bm{\sigma}_{hb}\bm n)\|^2_{0,\partial\mathcal{T}_h} +\mu\|\beta^{1/2} (tr(\bm{\sigma}_{hi})-\sigma^{tr}_{hb}\|^2_{\partial\mathcal{T}_h/\partial\Omega}\\
\hskip7.5cm\  \text{if } k+1<d,\\
\mu a_h(\bm{\sigma}_{hi},\bm{\sigma}_{hi})+\mu\|\alpha^{-1/2}(\bm{\sigma}_{hi}\bm{n}-\bm{\sigma}_{hb}\bm n)\|^2_{0,\partial\mathcal{T}_h}  \qquad \text{if } k+1\geq d,
\end{array}\right.
%
\end{eqnarray}
for all $ \sigma_{hb}^{tr}\in \Sigma_{hb}^{tr}$ and $(\bm{\sigma}_{hi}, \bm{\sigma}_{hb})\in \bm{\Sigma}_{hi}\times [L^2(\partial\mathcal{T}_h)]^{d\times d} $ satisfying
 \begin{eqnarray}
 &&
 \nabla_{w,\color{black}k+1\color{black}}\cdot\{\bm{\sigma}_{hi},\bm{\sigma}_{hb}\}=\bm{0},
 \label{c3.9}\\
&&\langle {\bm\sigma}_{hb}\bm n,\bm{v}_{hb}\rangle_{\partial\mathcal{T}_h}=0,
 \forall \bm{v}_{hb}\in \bm{U}^{D,\bm{0}}_{hb},\label{c3.10}\\
 &&tr(\bm{\sigma}_{hi})\in L^2_0(\Omega) \text{ if }\Gamma_N=\emptyset.\label{c3.11}
  \end{eqnarray}
\end{theorem}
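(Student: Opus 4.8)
The plan is to prove the two assertions separately, using throughout the algebraic identities \eqref{D1}--\eqref{D4}. For the continuity bound \eqref{continuity-a} I would start from $a_h(\bm{\sigma}_{hi},\bm{\tau}_{hi})=(\mathcal{A}\bm{\sigma}_{hi},\bm{\tau}_{hi})$, rewrite it by \eqref{D1} as $\tfrac{1}{2\mu}\big((\bm{\sigma}_{hi})_D,(\bm{\tau}_{hi})_D\big)+\tfrac{1}{d(d\lambda+2\mu)}\big(tr(\bm{\sigma}_{hi}),tr(\bm{\tau}_{hi})\big)$, bound each inner product by Cauchy--Schwarz, use $\lambda>0$ to replace the coefficient $\tfrac{1}{d(d\lambda+2\mu)}$ by the larger $\tfrac{1}{2\mu d}$ on the resulting nonnegative quantities, and then recombine $\|(\bm{\sigma}_{hi})_D\|_0\|(\bm{\tau}_{hi})_D\|_0+\tfrac1d\|tr(\bm{\sigma}_{hi})\|_0\|tr(\bm{\tau}_{hi})\|_0$ by a Cauchy--Schwarz inequality in $\mathbb{R}^2$; identity \eqref{D2} turns the right-hand side into $\tfrac{1}{2\mu}\|\bm{\sigma}_{hi}\|_0\|\bm{\tau}_{hi}\|_0$.

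For the coercivity bound \eqref{stability-a} I would split $\|\bm{\sigma}_{hi}\|_0^2=\|(\bm{\sigma}_{hi})_D\|_0^2+\tfrac1d\|tr(\bm{\sigma}_{hi})\|_0^2$ via \eqref{D2}. The deviatoric part is immediate: taking $\bm{\sigma}=\bm{\tau}=\bm{\sigma}_{hi}$ in \eqref{D1} gives $\|(\bm{\sigma}_{hi})_D\|_0^2\le 2\mu\,a_h(\bm{\sigma}_{hi},\bm{\sigma}_{hi})$. For the trace part I would apply Lemma \ref{LBB} with $p_{hi}=tr(\bm{\sigma}_{hi})\in\mathbb{P}_k(\mathcal{T}_h)$, whose mean-value hypothesis when $\Gamma_N=\emptyset$ is exactly \eqref{c3.11}. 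When $k+1<d$ this produces, besides the inf-sup quotient, the jump term $\sum_{E}h_E\|[tr(\bm{\sigma}_{hi})]\|^2_{0,E}$; writing $[tr(\bm{\sigma}_{hi})]$ on an interior edge $E=\partial T\cap\partial T'$ as $(tr(\bm{\sigma}_{hi})|_T-\sigma^{tr}_{hb})-(tr(\bm{\sigma}_{hi})|_{T'}-\sigma^{tr}_{hb})$, which is legitimate since $\sigma^{tr}_{hb}$ is single-valued, a triangle inequality together with $\mu\beta|_E=h_E/2$ bounds it by $\mu\|\beta^{1/2}(tr(\bm{\sigma}_{hi})-\sigma^{tr}_{hb})\|^2_{\partial\mathcal{T}_h/\partial\Omega}$.

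It remains to estimate the inf-sup quotient $(\nabla\cdot\bm{v}_{hi},tr(\bm{\sigma}_{hi}))/|\bm{v}_{hi}|_1$ for $\bm{v}_{hi}\in\bm{X}_{hi}$. Using \eqref{D3} and \eqref{D4} I would first rewrite $(\nabla\cdot\bm{v}_{hi},tr(\bm{\sigma}_{hi}))=d(\nabla\bm{v}_{hi},\bm{\sigma}_{hi})-d(\nabla\bm{v}_{hi},(\bm{\sigma}_{hi})_D)$; the last term is at once $\le d|\bm{v}_{hi}|_1\|(\bm{\sigma}_{hi})_D\|_0\lesssim|\bm{v}_{hi}|_1\sqrt{\mu\,a_h(\bm{\sigma}_{hi},\bm{\sigma}_{hi})}$. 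For the first term I would integrate by parts element by element, insert the $L^2$-projection $\bm{Q}^i_{k+1}\bm{v}_{hi}$, and use that $\nabla\cdot\bm{\sigma}_{hi}\in[\mathbb{P}_{k-1}(T)]^d$ is $L^2(T)$-orthogonal to $\bm{v}_{hi}-\bm{Q}^i_{k+1}\bm{v}_{hi}$, so that only boundary terms survive; the weak-divergence relation \eqref{c3.9}, read row-wise against scalar test polynomials of degree $k+1$, converts $(\nabla\bm{Q}^i_{k+1}\bm{v}_{hi},\bm{\sigma}_{hi})_T$ into $\langle\bm{\sigma}_{hb}\bm{n}_T,\bm{Q}^i_{k+1}\bm{v}_{hi}\rangle_{\partial T}$, and \eqref{c3.10} — applied with $\bm{v}_{hb}$ the single-valued, piecewise-degree-$(k+1)$ trace of $\bm{v}_{hi}\in[H^1_D(\Omega)]^d$ on $\partial\mathcal{T}_h$, which lies in $\bm{U}^{D,\bm{0}}_{hb}$ — annihilates $\sum_T\langle\bm{\sigma}_{hb}\bm{n}_T,\bm{v}_{hi}\rangle_{\partial T}$. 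This collapses the first term to $\sum_T\langle(\bm{\sigma}_{hi}-\bm{\sigma}_{hb})\bm{n}_T,\bm{v}_{hi}-\bm{Q}^i_{k+1}\bm{v}_{hi}\rangle_{\partial T}$, which by Cauchy--Schwarz, the projection/trace estimate $\|\bm{v}_{hi}-\bm{Q}^i_{k+1}\bm{v}_{hi}\|_{0,\partial T}\lesssim h_T^{1/2}|\bm{v}_{hi}|_{1,T}$ from Lemma \ref{lemma2.6}, and $\mu\alpha^{-1}|_E=h_E/2\sim h_T$, is $\lesssim\sqrt{\mu}\,\|\alpha^{-1/2}(\bm{\sigma}_{hi}\bm{n}-\bm{\sigma}_{hb}\bm{n})\|_{0,\partial\mathcal{T}_h}|\bm{v}_{hi}|_1$. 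Squaring, the squared supremum is $\lesssim\mu\,a_h(\bm{\sigma}_{hi},\bm{\sigma}_{hi})+\mu\|\alpha^{-1/2}(\bm{\sigma}_{hi}\bm{n}-\bm{\sigma}_{hb}\bm{n})\|^2_{0,\partial\mathcal{T}_h}$, and combining with the deviatoric bound and, when $k+1<d$, the jump-term bound, gives \eqref{stability-a} in both cases.

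I expect the main obstacle to be exactly this last step, which is also where the edge-polynomial constraint built into $\bm{X}_{hi}$ pays off: one must connect an $H^1$-conforming test field $\bm{v}_{hi}$, which is not piecewise polynomial inside elements, to the discrete weak-divergence operator, and this forces the detour through $\bm{Q}^i_{k+1}\bm{v}_{hi}$ and the two orthogonality/approximation bookkeeping terms. A secondary point to check carefully, especially for $d=3$, is that the trace of $\bm{v}_{hi}$ on $\partial\mathcal{T}_h$ is indeed realizable on all of $\mathcal{E}^*_h$ by an element of $\bm{U}^{D,\bm{0}}_{hb}$, so that \eqref{c3.10} may legitimately be invoked.
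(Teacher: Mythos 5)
Your proposal is correct and follows essentially the same route as the paper: Lemma \ref{LBB} applied to $tr(\bm{\sigma}_{hi})$, conversion of the jump term into the $\beta$-weighted stabilization term via the single-valuedness of $\sigma^{tr}_{hb}$, and the estimate of the inf-sup quotient by splitting off the deviatoric part via \eqref{D3}--\eqref{D4}, integrating by parts, inserting $\bm{Q}^i_{k+1}\bm{v}_{hi}$, and invoking \eqref{c3.9}--\eqref{c3.10} to reduce everything to the boundary term $\langle(\bm{\sigma}_{hi}-\bm{\sigma}_{hb})\bm{n},\bm{v}_{hi}-\bm{Q}^i_{k+1}\bm{v}_{hi}\rangle_{\partial\mathcal{T}_h}$, bounded by Lemma \ref{lemma2.6} with $m=1$. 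The only differences are cosmetic (you carry $(\nabla\bm v_{hi},\bm\sigma_{hi})$ and $(\nabla\bm v_{hi},(\bm\sigma_{hi})_D)$ separately where the paper writes $d(\nabla\bm v_{hi}-(\nabla\bm v_{hi})_D,\bm\sigma_{hi})$), and you correctly flag the point, left implicit in the paper, that the trace of $\bm v_{hi}\in\bm X_{hi}$ is realizable in $\bm{U}^{D,\bm{0}}_{hb}$ so that \eqref{c3.10} applies.
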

\begin{proof}  The inequality \eqref{continuity-a} follows from the identities (\ref{D1})-(\ref{D2}) and the definition of $a_h(\cdot,\cdot)$.

For $\bm{\sigma}_{hi}\in \bm{\Sigma}_{hi}$ satisfying \eqref{c3.11}, by Lemma \ref{LBB}   we obtain
\begin{eqnarray}\label{313}
\|tr(\bm{\sigma}_{hi})\|^2_0
\lesssim\left\{ \begin{array}{ll}
\sum\limits_{T\in\mathcal{T}_h/\partial\Omega}h_T\|tr(\bm{\sigma}_{hi})-\sigma_{hb}^{tr}\|^2_{0,\partial T}
          +\left[\sup\limits_{\bm{v}_{hi}\in \bm{X}_{hi}}\frac{(\nabla\cdot\bm{v}_{hi},tr(\bm{\sigma}_{hi}))}
             {|\bm{v}_{hi}|_1}\right]^2 & \text{if } k+1<d,\\
\left[\sup\limits_{\bm{v}_{hi}\in \bm{X}_{hi}}\frac{(\nabla\cdot\bm{v}_{hi},tr(\bm{\sigma}_{hi}))}
             {|\bm{v}_{hi}|_1}\right]^2 & \text{if } k+1\geq d,
\end{array}
\right.
\end{eqnarray}
where  in the case of $k+1<d$ we have used   the relation  $[\sigma_{hb}^{tr}]|_E=0$ for any $E\in  \mathcal{E}_h/\partial\Omega$ and the shape-regularity of $\mathcal{T}_h$.

In view of    (\ref{D3}), it holds
\begin{eqnarray*}
(\nabla\cdot\bm{v}_{hi},tr(\bm{\sigma}_{hi}))=(\nabla\cdot\bm{v}_{hi}I_{d\times d},\bm{\sigma}_{hi})=d(\nabla\bm{v}_{hi}-(\nabla\bm{v}_{hi})_D,\bm{\sigma}_{hi}).
\end{eqnarray*}
Since $\bm{v}_{hi}\in \bm{U}^{D,\bm{0}}_{hb}$  for all $\bm{v}_{hi}\in\bm{X}_{hi}$,
by the above relation, Green's formula, the identity (\ref{D4}), \eqref{c3.10}, and the definition of weak divergence, we get
\begin{eqnarray*}
(\nabla\cdot\bm{v}_{hi},tr(\bm{\sigma}_{hi}))&=&-d(\bm{v}_{hi},\nabla_h\cdot\bm{\sigma}_{hi})
                          +d\langle\bm{v}_{hi},\bm{\sigma}_{hi}\bm{n}\rangle_{\partial\mathcal{T}_h}
                          -d(\nabla\bm{v}_{hi},(\bm{\sigma}_{hi})_D)\nonumber\\
                          &=&-d(\bm{Q}^i_{\color{black}k+1\color{black}}\bm{v}_{hi},\nabla_h\cdot\bm{\sigma}_{hi})
                          +d\langle\bm{v}_{hi},\bm{\sigma}_{hi}\bm{n}-\bm{\sigma}_{hb}\bm n\rangle_{\partial\mathcal{T}_h} \nonumber\\
                          &&\quad -d(\nabla\bm{v}_{hi},(\bm{\sigma}_{hi})_D)\nonumber\\
                          &=&-d(\bm{Q}^i_{\color{black}k+1\color{black}}\bm{v}_{hi},\nabla_{w,\color{black}k+1\color{black}}\cdot\{\bm{\sigma}_{hi},\bm{\sigma}_{hb}\})\nonumber\\
                          &&\quad +d\langle\bm{v}_{hi}-\bm{Q}^i_{\color{black}k+1\color{black}}\bm{v}_{hi},\bm{\sigma}_{hi}\bm{n}-\bm{\sigma}_{hb}\bm n \rangle_{\partial\mathcal{T}_h}
                          -d(\nabla\bm{v}_{hi},(\bm{\sigma}_{hi})_D).\nonumber
\end{eqnarray*}
Then, from \eqref{c3.9} and Lemma \ref{lemma2.6} with $m=1$ it follows
\begin{eqnarray}
\frac{(tr(\bm{\sigma}_{hi}),\nabla\cdot\bm{v}_{hi})}{|\bm{v}_{hi}|_1}\lesssim \mu^{1/2}\|\alpha^{-1/2}(\bm{\sigma}_{hi}\bm{n}-\bm{\sigma}_{hb}\bm n)\|_{0,\partial\mathcal{T}_h}
                          +\|(\bm{\sigma}_{hi})_D\|_0,\label{316}
\end{eqnarray}
which, together with  (\ref{313})  and  (\ref{D1}), yields the desired inequality \eqref{stability-a}.
\end{proof}

%


\section{A priori error estimates}\label{s4}

This section  is devoted to the error estimation for the  modified WG schemes \eqref{wg1new}-\eqref{wg2new} and \eqref{wg1}-\eqref{wg2}. 

\begin{lemma}\label{lemma41} For all  $T\in\mathcal{T}_h$, $\bm{\tau}\in  \bm{H}(div,T)$,   $\bm v\in [H^1(\Omega)]^d$, $\bm{\tau}_{hi}\in [\mathbb{P}_k(T)]^d$,  and $\bm{v}_h=\{\bm{v}_{hi},\bm{v}_{hb}\}\in\bm{U}_h$, there holds
\begin{align}
\nabla_{w,\color{black}k+1\color{black}}\cdot\{\bm{Q}^i_{k}\bm{\tau}, \bm{Q}^b_{\color{black}k+1\color{black}}\bm{\tau}\}&=\bm{Q}_{k+1}^i\nabla\cdot\bm{\tau}, \forall \bm{\tau}\in  \bm{H}(div,T) \label{pwg2}\\
(\nabla_{w,k}\{\bm{Q}^i_{k+1}\bm{v}, \bm{\mathcal{I}}_{k+1}\bm{v}\},\bm{\tau}_{hi})_T&=(\nabla\bm{v},\bm{\tau}_{hi})_T+\langle \bm{\mathcal{I}}_{k+1}\bm{v}-\bm{v},\bm{\tau}_{hi}\bm{n}\rangle_{\partial T},\label{pwg1}\\
\small \|\bm{\epsilon}_h(\bm{v}_{hi})\|_0\lesssim
\|\bm{\epsilon}_{w,k}(\bm{v}_{h})\|_0&+\mu^{-1/2}\|\alpha^{1/2}(\bm{Q}^b_k\bm{v}_{hi}-\bm{v}_{hb})\|_{0,\partial\mathcal{T}_h},
 \label{pwg3}
 \end{align}
 where
$\bm{\epsilon}_{w,k}(\bm{v}_{h}):=\left(\nabla_{w,k} \bm{v}_{h}+(\nabla_{w,k} \bm{v}_{h})^T\right)/2$, and $\bm{\epsilon}_h(\bm{v}_{hi}):=\left(\nabla_h \bm{v}_{hi}+(\nabla_h \bm{v}_{hi})^T\right)/2$ with $\nabla_h$ denoting the broken gradient operator with respect to $\mathcal{T}_h$.

\end{lemma}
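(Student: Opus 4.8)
The plan is to prove the three assertions in order, each time testing the relevant discrete weak operator against a conveniently chosen polynomial and combining the defining relation of the operator with Green's formula and the defining property of the $L^2$-projections involved.

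For \eqref{pwg2}, I take an arbitrary $q\in\mathbb{P}_{k+1}(T)$ and apply the definition of $\nabla_{w,k+1,T}\cdot$ to the pair $\{\bm{Q}^i_k\bm{\tau},\bm{Q}^b_{k+1}\bm{\tau}\}$. Since $\nabla q\in[\mathbb{P}_k(T)]^d$ and, on every edge (or face) $E\subset\partial T$, $q|_E\in\mathbb{P}_{k+1}(E)$ with $\bm{n}_T$ constant on $E$, the projection properties give $(\bm{Q}^i_k\bm{\tau},\nabla q)_T=(\bm{\tau},\nabla q)_T$ and $\langle\bm{Q}^b_{k+1}\bm{\tau}\cdot\bm{n}_T,q\rangle_{\partial T}=\langle\bm{\tau}\cdot\bm{n}_T,q\rangle_{\partial T}$; an integration by parts then reduces the right-hand side to $(\nabla\cdot\bm{\tau},q)_T=(\bm{Q}^i_{k+1}\nabla\cdot\bm{\tau},q)_T$, and \eqref{pwg2} follows since $q$ is arbitrary in $\mathbb{P}_{k+1}(T)$. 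The identity \eqref{pwg1} is obtained in the same spirit: test against $\bm{\tau}_{hi}$, use the definition of $\nabla_{w,k}$ (componentwise), replace $\bm{Q}^i_{k+1}\bm{v}$ by $\bm{v}$ in the volume term — legitimate since $\nabla\cdot\bm{\tau}_{hi}$ has polynomial degree at most $k$ — and then apply Green's formula to the genuine $H^1$ function $\bm{v}$, which contributes $(\nabla\bm{v},\bm{\tau}_{hi})_T-\langle\bm{v},\bm{\tau}_{hi}\bm{n}\rangle_{\partial T}$ and hence, after recombining with the $\langle\bm{\mathcal{I}}_{k+1}\bm{v},\bm{\tau}_{hi}\bm{n}\rangle_{\partial T}$ term, the stated right-hand side.

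The estimate \eqref{pwg3} carries the real content. Repeating the computation behind \eqref{pwg1} but with the discrete pair $\bm{v}_h=\{\bm{v}_{hi},\bm{v}_{hb}\}$ in place of $\{\bm{Q}^i_{k+1}\bm{v},\bm{\mathcal{I}}_{k+1}\bm{v}\}$ gives, for every symmetric $\bm{\tau}_{hi}\in[\mathbb{P}_k(T)]^{d\times d}$, the identity $(\bm{\epsilon}_{w,k}(\bm{v}_h),\bm{\tau}_{hi})_T=(\bm{\epsilon}_h(\bm{v}_{hi}),\bm{\tau}_{hi})_T+\langle\bm{v}_{hb}-\bm{v}_{hi},\bm{\tau}_{hi}\bm{n}_T\rangle_{\partial T}$, where symmetry of $\bm{\tau}_{hi}$ lets $\nabla_h$ and $\nabla_{w,k}$ be replaced by $\bm{\epsilon}_h$ and $\bm{\epsilon}_{w,k}$. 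Since $\bm{\tau}_{hi}\bm{n}_T|_E\in[\mathbb{P}_k(E)]^d$, the boundary term equals $\langle\bm{Q}^b_k\bm{v}_{hi}-\bm{v}_{hb},\bm{\tau}_{hi}\bm{n}_T\rangle_{\partial T}$. I then choose $\bm{\tau}_{hi}=\bm{\epsilon}_h(\bm{v}_{hi})$, which is admissible because $\bm{v}_{hi}|_T\in[\mathbb{P}_{k+1}(T)]^d$; Cauchy--Schwarz on both terms, together with the scaled trace estimate $\|\bm{\epsilon}_h(\bm{v}_{hi})\bm{n}_T\|_{0,\partial T}\lesssim h_T^{-1/2}\|\bm{\epsilon}_h(\bm{v}_{hi})\|_{0,T}$ — a consequence of \eqref{partial1} and the inverse inequality \eqref{inverse} of Lemma \ref{lemma21} applied to the polynomial $\bm{\epsilon}_h(\bm{v}_{hi})$ — yields, after dividing by $\|\bm{\epsilon}_h(\bm{v}_{hi})\|_{0,T}$ (the vanishing case being trivial), the local bound $\|\bm{\epsilon}_h(\bm{v}_{hi})\|_{0,T}\lesssim\|\bm{\epsilon}_{w,k}(\bm{v}_h)\|_{0,T}+h_T^{-1/2}\|\bm{Q}^b_k\bm{v}_{hi}-\bm{v}_{hb}\|_{0,\partial T}$. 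Squaring, summing over $T\in\mathcal{T}_h$, and using shape regularity ($h_T\simeq h_E$ for $E\subset\partial T$) together with $\alpha|_E=2\mu h_E^{-1}$ to identify $\sum_{T}h_T^{-1}\|\bm{Q}^b_k\bm{v}_{hi}-\bm{v}_{hb}\|^2_{0,\partial T}\simeq\mu^{-1}\|\alpha^{1/2}(\bm{Q}^b_k\bm{v}_{hi}-\bm{v}_{hb})\|^2_{0,\partial\mathcal{T}_h}$ yields \eqref{pwg3}. The only delicate points are matching the projection $\bm{Q}^b_k$ to the polynomial degree of $\bm{\tau}_{hi}\bm{n}_T$ on the skeleton and invoking the correctly scaled trace/inverse inequality for the chosen test tensor; no ingredient beyond Lemmas \ref{lemma21} and \ref{lemma2.6} is needed.
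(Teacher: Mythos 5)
Your proposal is correct and follows essentially the same route as the paper: all three identities are obtained by testing the discrete weak operators against suitable polynomials, invoking the orthogonality of the $L^2$-projections, and integrating by parts, and \eqref{pwg3} is then derived by choosing the test tensor $\bm{\epsilon}_h(\bm{v}_{hi})$ and applying Cauchy--Schwarz with the trace and inverse inequalities of Lemma \ref{lemma21}. Your explicit insertion of $\bm{Q}^b_k$ in the boundary term (justified since $\bm{\epsilon}_h(\bm{v}_{hi})\bm{n}_T|_E\in[\mathbb{P}_k(E)]^d$) is a small refinement the paper leaves implicit, but it is not a different argument.
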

\begin{proof}
For any $T\in\mathcal{T}_h$, $\bm{v}_{hi}\in [\mathbb{P}_{k+1}(T)]^d$ and $\bm{\tau}\in \bm{H}(div,T)$, we use the definition of weak divergence, the orthogonality of $\bm{Q}^i_k$, $\bm{Q}^i_{k+1}$ and $\bm{Q}^b_k$, and Green's formula to get
 \begin{eqnarray*}
(\nabla_{w,k+1}\cdot\{\bm{Q}^i_{k}\bm{\tau}, \bm{Q}^b_{k+1}\bm{\tau}\},\bm{v}_{hi})_T&=&
-(\bm{Q}^i_{k}\bm{\tau},\nabla\bm{v}_{hi})_T+\langle  \bm{Q}^b_{k+1}\bm{\tau}\bm{n},\bm{v}_{hi} \rangle_{\partial T}
\nonumber\\
           &=&-(\bm{\tau},\nabla\bm{v}_{hi})_T+\langle  \bm{\tau}\bm{n},\bm{v}_{hi} \rangle_{\partial T}
\nonumber\\
     &=& ( \nabla\cdot\bm{\tau},\bm{v}_{hi})_T \nonumber\\
      &=& ( \bm{Q}^i_{k+1} \nabla\cdot\bm{\tau},\bm{v}_{hi})_T,
 \end{eqnarray*}
 which implies $\nabla_{w,k+1}\cdot\{\bm{Q}^i_{k}\bm{\tau}, \bm{Q}^b_{k+1}\bm{\tau}\}= \bm{Q}^i_{k+1} \nabla\cdot\bm{\tau}$.
\color{black}

From the definition of weak gradient operator $\nabla_{w,k}$, the projection property of $\bm{Q}^i_{k+1}$, and integration by parts, it follows, for all 
$\bm v\in [H^1(\Omega)]^d$, $\bm{\tau}_{hi}\in [\mathbb{P}_k(T)]^{\color{black}d\times d\color{black}}$,
\begin{eqnarray*}
(\nabla_{w,k}\{\bm{Q}^i_{k+1}\bm{v}, \bm{\mathcal{I}}_{k+1}\bm{v}\color{black}\},\bm{\tau}_{hi})_T&=&
-(\bm{Q}^i_{k+1}\bm{v},\nabla\cdot\bm{\tau}_{hi})_T+\langle \bm{\mathcal{I}}_{k+1}\bm{v},\bm{\tau}_{hi}\bm{n}\rangle_{\partial T}\nonumber\\
 &=&
-(\bm{v},\nabla\cdot\bm{\tau}_{hi})_T+\langle \bm{v},\bm{\tau}_{hi}\bm{n}\rangle_{\partial T}\nonumber\\
&&\quad +\langle \bm{\mathcal{I}}_{k+1}\bm{v}-\bm{v},\bm{\tau}_{hi}\bm{n}\rangle_{\partial T}\nonumber\\
&=&(\nabla\bm{v},\bm{\tau}_{hi})_T+\langle \bm{\mathcal{I}}_{k+1}\bm{v}-\bm{v},\bm{\tau}_{hi}\bm{n}\rangle_{\partial T}.
 \end{eqnarray*}
This proves \eqref{pwg1}.

For   $\bm{v}_h=\{\bm{v}_{hi},\bm{v}_{hb}\}\in\bm{U}_h$, we apply integration by parts,  the definitions of $\bm{\epsilon}_h(\bm{v}_{hi}),\ \bm{\epsilon}_{w,k}(\bm{v}_{h}),$ and $\nabla_{w,k} \bm{v}_{h}$ to get
\begin{eqnarray*}
(\bm{\epsilon}_h(\bm{v}_{hi}),\bm{\epsilon}_h(\bm{v}_{hi}))&=&-(\bm{v}_{hi}
,\nabla_h\cdot\bm{\epsilon}_h(\bm{v}_{hi}))
+\langle\bm{\epsilon}_h(\bm{v}_{hi})\bm{n},\bm{v}_{hi}\rangle_{\partial \mathcal{T}_h}\nonumber\\
 &=&(\bm{\epsilon}_{w,k}(\bm{v}_{hi})
,\bm{\epsilon}_h(\bm{v}_{hi}))
+\langle\bm{\epsilon}_h(\bm{v}_{hi})\bm{n},\bm{v}_{hi}-\bm{v}_{hb}\rangle_{\partial \mathcal{T}_h}.
\end{eqnarray*}
As a result,  the estimate (\ref{pwg3}) follows from  Cauchy-Schwarz inequality and the  inverse inequality. 
\end{proof}

Set
\begin{eqnarray*}
\bm\Sigma:=\left\{ \bm\tau\in [L^2(\Omega)]^{d\times d}:\ \bm\tau^T=\bm\tau\right\},\quad
\bm U:=\left\{ \bm  v\in [H^1(\Omega)]^{d}:\ \bm v|_{\Gamma_D}=\bm{g}_D\right\}.
\end{eqnarray*}

\begin{lemma}\label{lemma42} Let $(\bm{\sigma},\bm{u})\in (\bm\Sigma\cap \bm H(div,\Omega))\color{black}\times \bm U
$
be the solution to the model $(\ref{o1})$.  Then, for all $\bm{\tau}_{hi}\in \bm{\Sigma}_{hi} $, $ \tau_{hb}^{tr}\in \Sigma_{hb}^{tr}$,  and $ \bm{v}_h=\{\bm{v}_{hi},\bm{v}_{hb}\}\in \bm{U}_{hi}\times\bm{U}_{hb}^{D,\bm{0}}$, it holds the following error equations:
\begin{eqnarray}\label{e1101}
a_h(\bm{Q}^i_k\bm{\sigma},\bm{\tau}_{hi})
-b_h(\bm{\tau}_{hi},\{\bm{Q}^i_{k+1}\bm{u},\bm{\mathcal{I}}_{k+1}\bm{u}\})= E_0( \bm{u},\bm{\tau}_{hi})
 \end{eqnarray}
for $  k+1\ge d$,
\begin{eqnarray}
a_h(\bm{Q}^i_k\bm{\sigma},\bm{\tau}_{hi})
+{z}_h(\bm{Q}^i_k\bm{\sigma},\mathcal{I}_1tr(\bm{\sigma});\bm{\tau}_{hi},\tau^{tr}_{hb})&& \nonumber\\
-b_h(\bm{\tau}_{hi},\{\bm{Q}^i_{k+1}\bm{u},\bm{\mathcal{I}}_{k+1}\bm{u}\})&=&E_1(\bm{\sigma},\bm{u};\bm{\tau}_{hi},\tau^{tr}_{hb})
\label{e1100}
 \end{eqnarray}
 for $  k+1< d$,
and
\begin{eqnarray}\label{e120}
-b_h(\bm{Q}^i_k\bm{\sigma},\bm{v}_{h})-s_h(\{\bm{Q}^i_{k+1}\bm{u},\bm{\mathcal{I}}_{k+1}\bm{u}\},\bm{v}_h)
=(\bm{f},\bm{v}_{hi})-\langle \bm{g}_N,\bm{v}_{hb}\rangle_{\Gamma_N}+E_2(\bm{\sigma},\bm{u};\bm{v}_{h}),
 \end{eqnarray}
 where
 \begin{eqnarray}\label{E-form}
 E_0( \bm{u},\bm{\tau}_{hi})&:=&
-\langle \bm{\mathcal{I}}_{k+1}\bm{u}-\bm{u},\bm{\tau}_{hi}\bm{n}\rangle_{\partial \mathcal{T}_h},\\
 E_1(\bm{\sigma},\bm{u};\bm{\tau}_{hi},\tau^{tr}_{hb})&:=& -\langle \bm{\mathcal{I}}_{k+1}\bm{u}-\bm{u},\bm{\tau}_{hi}\bm{n}\rangle_{\partial \mathcal{T}_h}\nonumber\\
&&+\langle\beta(tr(\bm{Q}_k^i\bm{\sigma})-\mathcal{I}_1tr(\bm{\sigma})), tr(\bm{\tau}_{hi})-\tau^{tr}_{hb} \rangle_{\partial\mathcal{T}_h/\partial\Omega},\\
 E_2(\bm{\sigma},\bm{u};\bm{v}_{h})&:=& -\langle\alpha(\bm{Q}^i_{k+1}\bm{u}-\bm{\mathcal{I}}_{k+1}\bm{u}),\bm{v}_{hi}-\bm{v}_{hb}\rangle_{\partial\mathcal{T}_h}\nonumber\\
                                 &&-\langle\bm{\sigma}\bm{n}- \bm{Q}^i_k\bm{\sigma}\bm{n},\bm{v}_{hi}-\bm{v}_{hb}\rangle_{\partial\mathcal{T}_h}.\label{E-form-E2}
\end{eqnarray}

\end{lemma}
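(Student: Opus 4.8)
The plan is to derive the three error equations by testing the original model \eqref{o1} against discrete test functions and comparing with the consistency identities already established in Lemma \ref{lemma41}. The core observation is that $(\bm{\sigma},\bm{u})$ solves $\mathcal{A}\bm{\sigma}=\bm{\epsilon}(\bm{u})$ and $\nabla\cdot\bm{\sigma}=\bm{f}$, so after projecting we should recover the discrete bilinear forms up to the boundary mismatch terms $E_0$, $E_1$, $E_2$. I will treat \eqref{e1101}/\eqref{e1100} (the "stress" equations) first and \eqref{e120} (the "displacement" equation) second.

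\textbf{Step 1: the stress equations.} For any $\bm{\tau}_{hi}\in\bm{\Sigma}_{hi}$ start from $a_h(\bm{Q}^i_k\bm{\sigma},\bm{\tau}_{hi})=(\mathcal{A}\bm{Q}^i_k\bm{\sigma},\bm{\tau}_{hi})$. Since $\bm{\tau}_{hi}\in[\mathbb{P}_k(T)]^{d\times d}$ elementwise and $\mathcal{A}$ is a constant-coefficient symmetric tensor, $(\mathcal{A}\bm{Q}^i_k\bm{\sigma},\bm{\tau}_{hi})=(\mathcal{A}\bm{\sigma},\bm{\tau}_{hi})$ by the defining property of $\bm{Q}^i_k$; then use $\mathcal{A}\bm{\sigma}=\bm{\epsilon}(\bm{u})=(\nabla\bm{u}+(\nabla\bm{u})^T)/2$ and the symmetry of $\bm{\tau}_{hi}$ to write this as $(\nabla\bm{u},\bm{\tau}_{hi})$. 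Now invoke \eqref{pwg1} with $\bm{v}=\bm{u}$, which says $(\nabla_{w,k}\{\bm{Q}^i_{k+1}\bm{u},\bm{\mathcal{I}}_{k+1}\bm{u}\},\bm{\tau}_{hi})=(\nabla\bm{u},\bm{\tau}_{hi})+\langle\bm{\mathcal{I}}_{k+1}\bm{u}-\bm{u},\bm{\tau}_{hi}\bm{n}\rangle_{\partial\mathcal{T}_h}$; rearranging gives $(\nabla\bm{u},\bm{\tau}_{hi})=b_h(\bm{\tau}_{hi},\{\bm{Q}^i_{k+1}\bm{u},\bm{\mathcal{I}}_{k+1}\bm{u}\})-\langle\bm{\mathcal{I}}_{k+1}\bm{u}-\bm{u},\bm{\tau}_{hi}\bm{n}\rangle_{\partial\mathcal{T}_h}$, which is exactly \eqref{e1101} with $E_0$ as defined in \eqref{E-form}. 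For $k+1<d$ we additionally add the stabilization $z_h(\bm{Q}^i_k\bm{\sigma},\mathcal{I}_1 tr(\bm{\sigma});\bm{\tau}_{hi},\tau^{tr}_{hb})$ to both sides; since $tr(\bm{\sigma})$ is single-valued on interior edges, $\langle\beta(tr(\bm{\sigma})-tr(\bm{\sigma})),\cdot\rangle=0$, so inserting and subtracting $tr(\bm{\sigma})$ in the argument produces precisely the extra term $\langle\beta(tr(\bm{Q}^i_k\bm{\sigma})-\mathcal{I}_1 tr(\bm{\sigma})),tr(\bm{\tau}_{hi})-\tau^{tr}_{hb}\rangle_{\partial\mathcal{T}_h/\partial\Omega}$ appearing in $E_1$.

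\textbf{Step 2: the displacement equation.} For $\bm{v}_h=\{\bm{v}_{hi},\bm{v}_{hb}\}\in\bm{U}_{hi}\times\bm{U}_{hb}^{D,\bm{0}}$, expand $b_h(\bm{Q}^i_k\bm{\sigma},\bm{v}_h)=(\bm{Q}^i_k\bm{\sigma},\nabla_{w,k}\bm{v}_h)$ and use the definition of the discrete weak gradient together with integration by parts to rewrite it in terms of $\nabla\cdot(\bm{Q}^i_k\bm{\sigma})$ on elements and jumps of $\bm{v}_{hi}-\bm{v}_{hb}$ on $\partial\mathcal{T}_h$; alternatively, apply \eqref{pwg2} to express the relevant divergence as $\bm{Q}^i_{k+1}\nabla\cdot\bm{\sigma}=\bm{Q}^i_{k+1}\bm{f}$. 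Testing $\nabla\cdot\bm{\sigma}=\bm{f}$ against $\bm{v}_{hi}$ and using Green's formula element-by-element, together with the facts that $\bm{v}_{hb}$ is continuous and vanishes on $\Gamma_D$ (so interior-edge contributions of $\bm{\sigma}\bm{n}$ cancel and the $\Gamma_D$ contribution drops, leaving only $\langle\bm{g}_N,\bm{v}_{hb}\rangle_{\Gamma_N}$), one matches the right-hand side $(\bm{f},\bm{v}_{hi})-\langle\bm{g}_N,\bm{v}_{hb}\rangle_{\Gamma_N}$. The discrepancies between $\bm{\sigma}\bm{n}$ and $\bm{Q}^i_k\bm{\sigma}\bm{n}$ on $\partial\mathcal{T}_h$, and between $\bm{Q}^i_{k+1}\bm{u}$ and $\bm{\mathcal{I}}_{k+1}\bm{u}$ entering through $s_h(\{\bm{Q}^i_{k+1}\bm{u},\bm{\mathcal{I}}_{k+1}\bm{u}\},\bm{v}_h)=\langle\alpha(\bm{Q}^i_{k+1}\bm{u}-\bm{\mathcal{I}}_{k+1}\bm{u}),\bm{v}_{hi}-\bm{v}_{hb}\rangle_{\partial\mathcal{T}_h}$ (note $\bm{u}-\bm{u}=0$ since the exact $\bm{u}$ is single-valued), collect into the two terms of $E_2$ in \eqref{E-form-E2}.

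\textbf{Main obstacle.} The routine algebra is not the difficulty; the delicate point is the careful bookkeeping of boundary integrals over $\partial\mathcal{T}_h$ versus $\partial\mathcal{T}_h/\partial\Omega$ versus $\Gamma_N$, ensuring that (a) the single-valuedness of the exact quantities $\bm{\sigma}\bm{n}$, $tr(\bm{\sigma})$ and $\bm{u}$ makes the "exact minus exact" pieces vanish on interior edges, (b) the continuity and boundary conditions on $\bm{v}_{hb}\in\bm{U}_{hb}^{D,\bm{0}}$ and on the trace multipliers $\tau^{tr}_{hb}\in\Sigma_{hb}^{tr}$ (which live on $\mathcal{E}^*_h/\partial\Omega$) are used in exactly the right places so that no spurious boundary terms survive, and (c) the projections $\bm{Q}^i_k$, $\bm{Q}^i_{k+1}$, $\bm{Q}^b_{k+1}$, $\bm{\mathcal{I}}_{k+1}$ and $\mathcal{I}_1$ are inserted consistently with their orthogonality/interpolation properties from Lemmas \ref{lemma2.6} and \ref{lemma23}. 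Once this accounting is done correctly, equations \eqref{e1101}, \eqref{e1100} and \eqref{e120} follow directly.
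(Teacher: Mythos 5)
Your proposal is correct and follows essentially the same route as the paper: equation \eqref{e1101} via the orthogonality of $\bm{Q}^i_k$, the identity \eqref{pwg1} and the constitutive relation $\mathcal{A}\bm{\sigma}=\bm{\epsilon}(\bm{u})$; equation \eqref{e1100} by simply adding the $z_h$ term (which is, by definition, the second term of $E_1$ — your insert-and-subtract remark is superfluous but harmless); and equation \eqref{e120} via elementwise Green's formula, $\nabla\cdot\bm{\sigma}=\bm{f}$, and the single-valuedness/boundary conditions of $\bm{v}_{hb}$.
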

\begin{proof}By the definitions of $a_h(\cdot,\cdot)$, $b_h(\cdot,\cdot)$, the property (\ref{pwg1}) and the relation $\mathcal{A}\bm{\sigma}-\bm{\epsilon}(\bm{u})=\bm{0}$ in (\ref{o1}),  we   get
\begin{eqnarray}
&&a_h(\bm{Q}^i_k\bm{\sigma},\bm{\tau}_{hi})
-b_h(\bm{\tau}_{hi},\{\bm{Q}^i_{k+1}\bm{u},\bm{\mathcal{I}}_{k+1}\bm{u}\})\nonumber\\
&&\quad\quad=(\mathcal{A}\bm{\sigma}-\nabla\bm{u},\bm{\tau}_{hi})-\langle \bm{\mathcal{I}}_{k+1}\bm{u}-\bm{u},\bm{\tau}_{hi}\bm{n}\rangle_{\partial \mathcal{T}_h}
\nonumber\\
&&\quad\quad=(\mathcal{A}\bm{\sigma}-\bm{\epsilon}(\bm{u}),\bm{\tau}_{hi})-\langle \bm{\mathcal{I}}_{k+1}\bm{u}-\bm{u},\bm{\tau}_{hi}\bm{n}\rangle_{\partial \mathcal{T}_h}
\nonumber\\
&&\quad\quad=-\langle \bm{\mathcal{I}}_{k+1}\bm{u}-\bm{u},\bm{\tau}_{hi}\bm{n}\rangle_{\partial \mathcal{T}_h},
 \end{eqnarray}
 i.e. \eqref{e1101} holds.

 The relation  \eqref{e1100} follows from \eqref{e1101} and
   the definition of
$z_h(\cdot,\cdot;\cdot,\cdot)$.

The thing left is to show  \eqref{e120}. From integration by parts and the definitions of the weak gradient $\nabla_{w,k}$ and the projection $\bm{Q}^i_k$, it follows
\begin{eqnarray*}
-b_h(\bm{Q}^i_k\bm{\sigma},\bm{v}_{h})&=&(\nabla_h\cdot\bm{Q}^i_k\bm{\sigma},\bm{v}_{hi})-\langle \bm{Q}^i_k\bm{\sigma}\bm{n},\bm{v}_{hb}\rangle_{\partial\mathcal{T}_h}\nonumber\\
&=&-(\bm{Q}^i_k\bm{\sigma},\nabla_h\bm{v}_{hi})-\langle \bm{Q}^i_k\bm{\sigma}\bm{n},\bm{v}_{hb}-\bm{v}_{hi}\rangle_{\partial\mathcal{T}_h},\\
&=&(\nabla\cdot\bm{\sigma},\bm{v}_{hi})-\langle \bm{\sigma}\bm{n},\bm{v}_{hi}\rangle_{\partial\mathcal{T}_h}-\langle \bm{Q}^i_k\bm{\sigma}\bm{n},\bm{v}_{hb}-\bm{v}_{hi}\rangle_{\partial\mathcal{T}_h},
 \end{eqnarray*}
which, together with   (\ref{o1}), the relation $\langle \bm{\sigma}\bm{n},\bm{v}_{hb}\rangle_{\partial\mathcal{T}_h}=\langle \bm{\sigma}\bm{n},\bm{v}_{hb}\rangle_{\Gamma_N}$ and the definition of $s_h(\cdot,\cdot)$, yields the relation \eqref{e120}.
\end{proof}

Introduce the space of (infinitesimal) rigid motions on $\Omega$:
 \begin{eqnarray}
\bm{RM}(\Omega)=\{\bm{a}+\bm{b}\bm{\eta}:\bm{a}\in\mathbb{R}^d,\bm{\eta}\in \mathfrak{so}(d)\},
\end{eqnarray}
where $\mathfrak{so}(d)$ is the Lie algebra of anti-symmetric $d\times d$ matrices. The space $\bm{RM}(\Omega)$ is precisely the kernel of the strain tensor.
We  recall the Piecewise Korn's inequality as follows.

\begin{lemma} \label{lemmaKorn}\cite{Korn} (Piecewise Korn's inequality) Let $\mathcal{T}_h$  be a  shape-regular decomposition of $\Omega$, then for any  $\bm{v}_p\in [H^1(\mathcal{T}_h)]^d$ it holds 
\begin{eqnarray}
\|\nabla_h\bm{v}_p\|^2_0\lesssim \|\bm{\epsilon}_h(\bm{v}_p)\|^2_0+\sup_{\substack{\bm{m}\in\bm{RM}(\Omega), \\ \|\bm{m}\|_{0,\Gamma_s}=1, \\ \int_{\Gamma_s}\bm{m}\ ds=\bm{0}}}\langle \bm{v}_p\cdot\bm{m} \rangle_{\Gamma_s}^2+\sum_{E\in\mathcal{E}_h/\partial\Omega}h_E^{-1}\|\bm{Q}^b_1[\bm{v}_p]\|^2_{0,E}.
\end{eqnarray}
where 
$\Gamma_s$ is a measurable subset of $\partial\Omega$ with a positive $d-1$-dimensional, and  $[\bm{v}_p]|_E$   denote the jump   of $\bm{v}_p$ over $E\in\mathcal{E}_h$.
\end{lemma}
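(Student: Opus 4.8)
This is the piecewise Korn inequality of \cite{Korn}; I describe the route I would follow.

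The plan is to reduce the piecewise‑$H^1$ field $\bm{v}_p$ to a \emph{piecewise rigid motion} and then control that piecewise rigid motion by the right‑hand side. On each $T\in\mathcal{T}_h$, the classical Korn and Poincar\'e inequalities on $T$ provide $\bm{r}_T\in\bm{RM}(T)$ with
\begin{eqnarray*}
\|\nabla(\bm{v}_p-\bm{r}_T)\|_{0,T}\lesssim\|\bm{\epsilon}_h(\bm{v}_p)\|_{0,T},\qquad h_T^{-1}\|\bm{v}_p-\bm{r}_T\|_{0,T}\lesssim\|\bm{\epsilon}_h(\bm{v}_p)\|_{0,T},
\end{eqnarray*}
the hidden constants being uniform in $h$ because \textbf{M1} bounds the chunkiness parameter $\gamma_T\le\theta_*^{-1}$. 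Set $\bm{w}_T:=\bm{v}_p-\bm{r}_T$, let $\bm{w}\in[H^1(\mathcal{T}_h)]^d$ and $\bm{r}\in[H^1(\mathcal{T}_h)]^d$ be the piecewise functions with $\bm{w}|_T=\bm{w}_T$, $\bm{r}|_T=\bm{r}_T$. Then $\|\nabla_h\bm{v}_p\|_0^2\lesssim\|\bm{\epsilon}_h(\bm{v}_p)\|_0^2+\|\nabla_h\bm{r}\|_0^2$, so it remains to bound $\|\nabla_h\bm{r}\|_0^2$.

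Next I would transfer the jump and boundary data from $\bm{v}_p$ to $\bm{r}$. On an interior edge $E=\partial T^+\cap\partial T^-$ one has $[\bm{r}]|_E=[\bm{v}_p]|_E-(\bm{w}_{T^+}-\bm{w}_{T^-})|_E$; since $[\bm{r}]|_E$ is the restriction of a rigid motion it is affine on $E$, hence $\bm{Q}^b_1[\bm{r}]|_E=[\bm{r}]|_E$. Combining this with the $L^2(E)$‑stability of $\bm{Q}^b_1$, the trace inequality \eqref{partial1}, and the two elementwise bounds above yields, with $\omega_E:=T^+\cup T^-$,
\begin{eqnarray*}
h_E^{-1}\|\bm{Q}^b_1[\bm{r}]\|^2_{0,E}\lesssim h_E^{-1}\|\bm{Q}^b_1[\bm{v}_p]\|^2_{0,E}+\|\bm{\epsilon}_h(\bm{v}_p)\|^2_{0,\omega_E};
\end{eqnarray*}
summing over $E$ and using finite overlap controls the jump energy of $\bm{r}$ by the right‑hand side of the asserted inequality. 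Likewise, for any admissible $\bm{m}$, $|\langle(\bm{v}_p-\bm{r})\cdot\bm{m}\rangle_{\Gamma_s}|=|\langle\bm{w}\cdot\bm{m}\rangle_{\Gamma_s}|\le\|\bm{w}\|_{0,\Gamma_s}\lesssim\|\bm{\epsilon}_h(\bm{v}_p)\|_0$ by \eqref{partial1}, so $\sup_{\bm{m}}\langle\bm{r}\cdot\bm{m}\rangle^2_{\Gamma_s}\lesssim\sup_{\bm{m}}\langle\bm{v}_p\cdot\bm{m}\rangle^2_{\Gamma_s}+\|\bm{\epsilon}_h(\bm{v}_p)\|_0^2$.

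It then remains to establish the purely discrete estimate
\begin{eqnarray*}
\|\nabla_h\bm{r}\|^2_0\lesssim\sum_{E\in\mathcal{E}_h/\partial\Omega}h_E^{-1}\|\bm{Q}^b_1[\bm{r}]\|^2_{0,E}+\sup_{\bm{m}}\langle\bm{r}\cdot\bm{m}\rangle^2_{\Gamma_s}
\end{eqnarray*}
for piecewise rigid motions $\bm{r}$, and this is the main obstacle, since $\bm{\epsilon}(\bm{r}_T)=\bm{0}$ leaves no strain term to exploit. The kernel of the right‑hand seminorm is $\bm{RM}(\Omega)$: if all $\bm{Q}^b_1$‑projected interior jumps vanish, two neighbouring rigid motions share their first‑order edge moments on the common face and hence coincide, so by connectedness $\bm{r}$ is a single global rigid motion, which the $\Gamma_s$‑functional then forces to be $\bm{0}$. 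Promoting this injectivity to a bound with a constant uniform over all shape‑regular meshes needs a scaling/macro‑element argument as in \cite{Korn}, to which I would defer for the remaining technicalities; the claim then follows by combining it with the two preceding steps.
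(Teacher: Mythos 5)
The paper offers no proof of this lemma at all: it is stated as a direct citation of \cite{Korn} (Brenner's piecewise Korn inequality), so there is no internal argument to compare against. Your sketch faithfully reproduces the strategy of that reference --- elementwise Korn/Poincar\'e to extract a piecewise rigid motion $\bm{r}$, transfer of the jump and boundary functionals from $\bm{v}_p$ to $\bm{r}$ using $\bm{Q}^b_1[\bm{r}]=[\bm{r}]$ and the scaled trace inequality, and then a discrete estimate for piecewise rigid motions --- and each of the steps you carry out explicitly is correct under the shape-regularity assumptions \textbf{M1}--\textbf{M2}. Since you ultimately defer the one genuinely hard step (the uniform-in-$h$ bound for piecewise rigid motions, which in \cite{Korn} is handled by a nonstandard compactness/scaling argument over shape-regular partitions) back to the same reference, your proposal is in effect a citation with added exposition, which is exactly the paper's own treatment. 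One small imprecision: the $\Gamma_s$-functional, with its constraints $\|\bm{m}\|_{0,\Gamma_s}=1$ and $\int_{\Gamma_s}\bm{m}\,ds=\bm{0}$, does not force the residual global rigid motion to vanish --- it only forces it to be a constant vector --- but that suffices because the left-hand side $\|\nabla_h\bm{v}_p\|_0^2$ annihilates constants; you should state the kernel of the right-hand seminorm as the constant fields rather than $\{\bm{0}\}$.
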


By Lemma \ref{lemmaKorn}, we can prove the following lemma.
\begin{lemma}
 \label{lemmaKorn2}
For any  $\bm{v}_h=\{\bm{v}_{hi},\bm{v}_{hb}\}\in \bm{U}_{h}^{\bm{0}}$, 
it holds
\begin{eqnarray}
\|\nabla_h\bm{v}_{hi}\|^2_0\lesssim\|\bm{\epsilon}_h(\bm{v}_{hi})\|^2_0+\sum_{T\in\mathcal{T}_h}h_T^{-1}\|\bm{v}_{hi}-\bm{v}_{hb}\|^2_{0,\partial T}. \label{41500}
\end{eqnarray}
\end{lemma}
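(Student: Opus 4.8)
The plan is to deduce \eqref{41500} from the Piecewise Korn's inequality of Lemma \ref{lemmaKorn}, applied with $\bm{v}_p=\bm{v}_{hi}$ and with the boundary piece $\Gamma_s=\Gamma_D$ (which has positive $(d-1)$-measure by hypothesis on $\mathcal{T}_h$). This reduces the task to bounding the two "extra" terms on the right-hand side of Korn's inequality, namely the rigid-motion supremum over $\Gamma_D$ and the jump sum $\sum_{E\in\mathcal{E}_h/\partial\Omega}h_E^{-1}\|\bm{Q}^b_1[\bm{v}_{hi}]\|^2_{0,E}$, each by $\|\bm{\epsilon}_h(\bm{v}_{hi})\|_0^2+\sum_{T}h_T^{-1}\|\bm{v}_{hi}-\bm{v}_{hb}\|_{0,\partial T}^2$.

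For the jump term, the key observation is that $\bm{v}_{hb}$ is single-valued (indeed continuous) on each interior edge $E$, so $[\bm{v}_{hi}]|_E = [\bm{v}_{hi}-\bm{v}_{hb}]|_E$, and hence by the triangle inequality and $L^2$-stability of $\bm{Q}^b_1$ (Lemma \ref{lemma2.6}), $\|\bm{Q}^b_1[\bm{v}_{hi}]\|_{0,E}^2\lesssim \sum_{T\supset E}\|\bm{v}_{hi}-\bm{v}_{hb}\|_{0,E}^2$. Summing over interior edges and using shape-regularity to replace $h_E^{-1}$ by $h_T^{-1}$ gives the bound $\sum_{E}h_E^{-1}\|\bm{Q}^b_1[\bm{v}_{hi}]\|_{0,E}^2\lesssim \sum_{T}h_T^{-1}\|\bm{v}_{hi}-\bm{v}_{hb}\|_{0,\partial T}^2$, which is controlled by the right-hand side of \eqref{41500}.

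For the rigid-motion term, I would use that $\bm{v}_{hb}$ vanishes on $\Gamma_D$ (since $\bm{v}_h\in\bm{U}_h^{\bm 0}$, so $\bm{v}_{hb}|_{\Gamma_D}=\bm{\mathcal{I}}_{k+1}\bm 0=\bm 0$). Thus on $\Gamma_D$ we may write $\langle \bm{v}_{hi}\cdot\bm m\rangle_{\Gamma_D}=\langle(\bm{v}_{hi}-\bm{v}_{hb})\cdot\bm m\rangle_{\Gamma_D}$, and Cauchy–Schwarz plus $\|\bm m\|_{0,\Gamma_D}=1$ gives $\langle\bm{v}_{hi}\cdot\bm m\rangle_{\Gamma_D}^2\le \|\bm{v}_{hi}-\bm{v}_{hb}\|_{0,\Gamma_D}^2\le\sum_{T, \partial T\cap\Gamma_D\neq\emptyset}\|\bm{v}_{hi}-\bm{v}_{hb}\|_{0,\partial T}^2$. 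Inserting a factor $h_T/h_T$ and using $h_T\lesssim 1$ (the domain is bounded), this is $\lesssim \sum_T h_T^{-1}\|\bm{v}_{hi}-\bm{v}_{hb}\|_{0,\partial T}^2$, again controlled by the right-hand side of \eqref{41500}. Combining the two estimates with Lemma \ref{lemmaKorn} yields \eqref{41500}.

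The main obstacle — though a mild one — is bookkeeping the mesh constants correctly: one must invoke shape-regularity (assumptions \textbf{M1}–\textbf{M2}) to pass between $h_E$ and $h_T$ for edges $E\subset\partial T$, and one must be careful that the boundary term in Korn's inequality is stated for $\Gamma_s\subset\partial\Omega$ while the quantity we control lives on $\partial T$; the containment $\Gamma_D\cap\partial T\subset\partial T$ and the fact that there are boundedly many elements touching any fixed $\Gamma_s$-relevant region (or simply summing over all $T$) handle this. No regularity beyond $\bm{v}_{hi}\in[H^1(\mathcal{T}_h)]^d$ is needed, and the continuity of $\bm{v}_{hb}$ plus its vanishing on $\Gamma_D$ are exactly the two structural facts that make the extra Korn terms harmless.
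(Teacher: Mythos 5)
Your proposal is correct and follows essentially the same route as the paper: apply the piecewise Korn inequality with $\Gamma_s=\Gamma_D$, use the single-valuedness of $\bm{v}_{hb}$ on interior edges to replace $[\bm{v}_{hi}]$ by $[\bm{v}_{hi}-\bm{v}_{hb}]$ together with the $L^2$-stability of $\bm{Q}^b_1$, and use $\bm{v}_{hb}|_{\Gamma_D}=\bm{0}$ plus Cauchy--Schwarz and $h_T\lesssim 1$ to absorb the rigid-motion boundary term. Your handling of the projected jump (directly via $\|\bm{Q}^b_1 w\|_{0,E}\le\|w\|_{0,E}$) is in fact slightly cleaner than the paper's intermediate step, but the argument is the same.
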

\begin{proof} For $\bm{v}_h=\{\bm{v}_{hi},\bm{v}_{hb}\}\in \bm{U}_{h}^{\bm{0}}$, we apply Lemma \ref{lemmaKorn}, Cauchy-Schwarz inequality to get
\begin{eqnarray}
\|\nabla_h\bm{v}_{hi}\|^2_0&\lesssim& \|\bm{\epsilon}_h(\bm{v}_{hi})\|^2_0+\|\bm{v}_{hi}\|^2_{0,\Gamma_D}+\sum_{E\in\mathcal{E}_h/\partial\Omega}h_E^{-1}\|\bm{Q}^b_1[\bm{v}_{hi}]\|^2_{0,E}.
\nonumber\\
&\le&\|\bm{\epsilon}_h(\bm{v}_{hi})\|^2_0+\|\bm{v}_{hi}\|^2_{0,\Gamma_D}+\sum_{E\in\mathcal{E}_h/\partial\Omega}h_E^{-1}\|\bm{Q}^b_k[\bm{v}_{hi}]\|^2_{0,E}.
\nonumber\\
&=&\|\bm{\epsilon}_h(\bm{v}_{hi})\|^2_0+\|\bm{v}_{hi}-\bm{v}_{hb}\|^2_{0,\Gamma_D}+\sum_{E\in\mathcal{E}_h/\partial\Omega}h_E^{-1}\|[\bm{v}_{hi}-\bm{v}_{hb}]\|^2_{0,E}.
\nonumber\\
&\lesssim&\|\bm{\epsilon}_h(\bm{v}_{hi})\|^2_0+\sum_{T\in\mathcal{T}_h}h_T^{-1}\|\bm{v}_{hi}-\bm{v}_{hb}\|^2_{0,\partial T}.
\end{eqnarray}
This completes the proof.
\end{proof}

\begin{lemma}\label{lemma43} For   $(\bm{\sigma},\bm{u})\in(\bm\Sigma\bigcap [H^{k+1}(\Omega)]^{d\times d})\times (\bm U\bigcap [H^{k+2}(\Omega)]^{d})$ and $\bm{v}_h=\{\bm{v}_{hi},\bm{v}_{hb}\}\in\bm{U}_{h}^{\bm{0}}$, it holds
 \begin{eqnarray}
  | E_0(\bm{u},\bm{\tau}_{hi})|
 \lesssim h^{k+1}|\bm{u}|_{k+2}\|\bm{\tau}_{hi}\|_0,\label{42001}\\
 | E_2(\bm{\sigma},\bm{u};\bm{v}_{h})|\lesssim h^{k+1}\big(\mu^{-1/2}|\bm{\sigma}|_{k+1}+\mu^{1/2}|\bm{u}|_{k+2}\big) \| \alpha^{1/2}(\bm{v}_{hi}-\bm{v}_{hb})\|_{\partial\mathcal{T}_h},
\label{42002}
  \end{eqnarray}
  and, for $k+1<d$, 
 \begin{eqnarray}\label{42003}
 | E_1(\bm{\sigma},\bm{u};\bm{\tau}_{hi},\tau^{tr}_{hb})|
 &\lesssim& h^{k+1}\big(\mu^{-1/2}|\bm{\sigma}|_{k+1}+\mu^{1/2}|\bm{u}|_{k+2}\big)\nonumber\\
 &&\cdot\big(\mu^{-1/2}\|\bm{\tau}_{hi}\|_0+\|\beta^{1/2}( tr(\bm{\tau}_{hi})-\tau^{tr}_{hb} )\|_{\partial\mathcal{T}_h/\partial\Omega}\big).
 \end{eqnarray}

\end{lemma}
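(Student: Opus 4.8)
The plan is to bound $E_0$, $E_1$ and $E_2$ term by term. On each element boundary $\partial T$ I would apply the Cauchy--Schwarz inequality, distributing the weights $\alpha|_E=2\mu h_E^{-1}$ and $\beta|_E=(2\mu)^{-1}h_E$ symmetrically ($\alpha=\alpha^{1/2}\cdot\alpha^{-1/2}$, $\beta=\beta^{1/2}\cdot\beta^{-1/2}$) so that the powers of $\mu$ fall on the right factors, then insert the local interpolation and projection error bounds of Lemma~\ref{lemma23} and Lemma~\ref{lemma2.6}, and finally sum over $T\in\mathcal{T}_h$ using the discrete Cauchy--Schwarz inequality and the bounded-overlap property of the patches $S(T)$. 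Since $\bm{\tau}_{hi}|_T$ is a polynomial of degree $k$, wherever $\bm{\tau}_{hi}$ occurs on $\partial T$ without a weight I pass to its interior norm via the trace inequality \eqref{partial1} and the inverse inequality \eqref{inverse}, giving $\|\bm{\tau}_{hi}\|_{0,\partial T}\lesssim h_T^{-1/2}\|\bm{\tau}_{hi}\|_{0,T}$; I also use $h_E\simeq h_T$ for every proper edge/face $E$ of $T$.

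For $E_0$ I would estimate, on each $\partial T$, $|\langle\bm{\mathcal{I}}_{k+1}\bm{u}-\bm{u},\bm{\tau}_{hi}\bm{n}\rangle_{\partial T}|\le\|\bm{\mathcal{I}}_{k+1}\bm{u}-\bm{u}\|_{0,\partial T}\,\|\bm{\tau}_{hi}\|_{0,\partial T}$, then use \eqref{2.8} with $m=k+2$ to get $\|\bm{\mathcal{I}}_{k+1}\bm{u}-\bm{u}\|_{0,\partial T}\lesssim h_T^{k+3/2}|\bm{u}|_{k+2,S(T)}$ together with the trace/inverse bound above; the product is $\lesssim h_T^{k+1}|\bm{u}|_{k+2,S(T)}\|\bm{\tau}_{hi}\|_{0,T}$, and summation yields \eqref{42001}. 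The displacement-interpolation term of $E_1$ is the same estimate, after writing $h^{k+1}|\bm{u}|_{k+2}\|\bm{\tau}_{hi}\|_0=h^{k+1}\big(\mu^{1/2}|\bm{u}|_{k+2}\big)\big(\mu^{-1/2}\|\bm{\tau}_{hi}\|_0\big)$ to fit the form of \eqref{42003}. For $E_2$ I would split both inner products as $\alpha^{-1/2}\cdot\alpha^{1/2}$, keeping $\|\alpha^{1/2}(\bm{v}_{hi}-\bm{v}_{hb})\|_{0,\partial T}$ as the second factor throughout. In the first term, writing $\bm{Q}^i_{k+1}\bm{u}-\bm{\mathcal{I}}_{k+1}\bm{u}=(\bm{Q}^i_{k+1}\bm{u}-\bm{u})+(\bm{u}-\bm{\mathcal{I}}_{k+1}\bm{u})$ and applying \eqref{pr11} ($s=0$, $m=k+2$) and \eqref{2.8}, together with $\alpha^{1/2}\lesssim\mu^{1/2}h_T^{-1/2}$, gives $\|\alpha^{1/2}(\bm{Q}^i_{k+1}\bm{u}-\bm{\mathcal{I}}_{k+1}\bm{u})\|_{0,\partial T}\lesssim\mu^{1/2}h_T^{k+1}|\bm{u}|_{k+2,S(T)}$. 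In the second term, \eqref{pr11} ($s=0$, $m=k+1$) gives $\|(\bm{\sigma}-\bm{Q}^i_k\bm{\sigma})\bm{n}\|_{0,\partial T}\lesssim h_T^{k+1/2}|\bm{\sigma}|_{k+1,T}$, and $\alpha^{-1/2}\lesssim\mu^{-1/2}h_T^{1/2}$ makes this contribute $\mu^{-1/2}h_T^{k+1}|\bm{\sigma}|_{k+1,T}$; summing over $T$ yields \eqref{42002}.

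Finally, for the stabilization term of $E_1$ — which appears only when $k+1<d$, so $k\le1$ — I would split by $\beta^{1/2}\cdot\beta^{1/2}$, keeping $\|\beta^{1/2}(tr(\bm{\tau}_{hi})-\tau^{tr}_{hb})\|_{0,\partial T}$ as the second factor. Using $tr(\bm{Q}^i_k\bm{\sigma})=Q^i_k(tr(\bm{\sigma}))$ I decompose $tr(\bm{Q}^i_k\bm{\sigma})-\mathcal{I}_1 tr(\bm{\sigma})=\big(Q^i_k(tr(\bm{\sigma}))-tr(\bm{\sigma})\big)+\big(tr(\bm{\sigma})-\mathcal{I}_1 tr(\bm{\sigma})\big)$, bound the first summand by \eqref{pr11} ($s=0$, $m=k+1$) and the second by \eqref{2.8} with $m=k+1$ (admissible because $k+1\le2$, the maximal smoothness index for the $\mathbb{P}_1$-interpolant $\mathcal{I}_1$ of Lemma~\ref{lemma23}), obtaining $\lesssim h_T^{k+1/2}|\bm{\sigma}|_{k+1,S(T)}$ for each; then $\beta^{1/2}\lesssim\mu^{-1/2}h_T^{1/2}$ and $|tr(\bm{\sigma})|_{k+1}\lesssim|\bm{\sigma}|_{k+1}$ give $\mu^{-1/2}h_T^{k+1}|\bm{\sigma}|_{k+1,S(T)}$, and combining with the $E_0$-type estimate for the displacement term yields \eqref{42003}. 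The main obstacle is precisely this last estimate: one has to check that the two orders $h_T^{k+1/2}$ coming from the $L^2$-projection error \eqref{pr11} and from the Scott--Zhang error \eqref{2.8} coincide, and that the limited approximation power of $\mathcal{I}_1$ (range only $\mathbb{P}_1$) still suffices for the target rate $h^{k+1}$ — which works exactly because $k+1<d\le3$; the remainder is routine bookkeeping of the $h_T$-exponents produced by the trace and inverse inequalities and of the powers of $\mu$.
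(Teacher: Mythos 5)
Your proposal is correct and follows essentially the same route as the paper: Cauchy--Schwarz with the weights $\alpha,\beta$ split so the powers of $\mu$ land on the right factors, the trace/inverse inequalities for the polynomial factors, the approximation bounds of Lemmas \ref{lemma23} and \ref{lemma2.6}, and summation over elements. The only (cosmetic) difference is in the stabilization term of $E_1$: the paper first displays an intermediate bound $\mu^{-1/2}h^{2}|\bm{\sigma}|_{k+1}\|\beta^{1/2}(tr(\bm{\tau}_{hi})-\tau^{tr}_{hb})\|_{\partial\mathcal{T}_h/\partial\Omega}$ and then invokes $h^{2}\le h^{k+1}$, whereas you take $m=k+1$ in both \eqref{pr11} and \eqref{2.8} (admissible since $k+1<d\le 3$) and reach the rate $h^{k+1}$ directly — which is the cleaner bookkeeping, since it only uses the assumed $H^{k+1}$ regularity of $\bm{\sigma}$.
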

\begin{proof} The estimates \eqref{42001}-\eqref{42002}  follow from  Cauchy-Schwarz inequality, the inverse inequality,   Lemmas \ref{lemma23} and  \ref{lemma2.6}.

Similarly,  we have
\begin{eqnarray}
| E_1(\bm{\sigma},\bm{u};\bm{\tau}_{hi},\tau^{tr}_{hb})|&\lesssim& h^{k+1}|\bm{u}|_{k+2}\|\bm{\tau}_{hi}\|_0\nonumber\\
&&+\mu^{-1/2}h^{2}|\bm{\sigma}|_{k+1}\|\beta^{1/2}( tr(\bm{\tau}_{hi})-\tau^{tr}_{hb} )\|_{\partial\mathcal{T}_h/\partial\Omega}\nonumber\\
&\lesssim& h^{k+1}|\bm{u}|_{k+2}\|\bm{\tau}_{hi}\|_0\nonumber\\
&&+\mu^{-1/2}h^{k+1}|\bm{\sigma}|_{k+1}\|\beta^{1/2}( tr(\bm{\tau}_{hi})-\tau^{tr}_{hb} )\|_{\partial\mathcal{T}_h/\partial\Omega},
\end{eqnarray}
where we have used the the fact  $h^2\le h^{k+1}$  for  $h<1$ and $k+1<d$ with $d=2,3$. This completes the proof.

\end{proof}

%
%

\begin{lemma} \label{lemma44} Let  $(\bm{\sigma},\bm{u})\in(\bm\Sigma\bigcap [H^{k+1}(\Omega)]^{d\times d})\times (\bm U\bigcap [H^{k+2}(\Omega)]^{d})$  be the solution to the model $(\ref{o1})$, and let
$\left(\bm{\sigma}_{hi}, \sigma_{hb}^{tr}, \bm{u}_h:=\{\bm{u}_{hi},\bm{u}_{hb}\}\right)\in  \bm{\Sigma}_{hi}\times \Sigma_{hb}^{tr}\times \bm{U}_{h}^{\bm{g}_D}$ and $\left(\bm{\sigma}_{hi},  \bm{u}_h\right)\in  \bm{\Sigma}_{hi}\times \bm{U}_{h}^{\bm{g}_D}$ be the solutions to the
 WG schemes \eqref{wg1new}-\eqref{wg2new} and \eqref{wg1}-\eqref{wg2}, respectively.
%
 Then it holds 
\begin{eqnarray}\label{4.19}
&&a_h(\bm{\xi}^{\sigma}_{hi},\bm{\xi}^{\sigma}_{hi})
+{z}_h\left(tr(\bm{\xi}^{\sigma}_{hi}),\xi^{tr}_{hb};tr(\bm{\xi}^{\sigma}_{hi}),\xi^{tr}_{hb}\right)
+s_h(\bm{\xi}^u_h,\bm{\xi}^u_h)\nonumber\\
&&\quad\quad\quad\quad\quad\lesssim h^{2k+2}(\mu^{-1}|\bm{\sigma}|^2_{k+1}+\mu|\bm{u}|^2_{k+2})
\end{eqnarray}
 for $k+1<d$, and
\begin{eqnarray}\label{4.199}
a_h(\bm{\xi}^{\sigma}_{hi},\bm{\xi}^{\sigma}_{hi})
+s_h(\bm{\xi}^u_h,\bm{\xi}^u_h)\lesssim h^{2k+2}(\mu^{-1}|\bm{\sigma}|^2_{k+1}+\mu|\bm{u}|^2_{k+2})
\end{eqnarray}
for $k+1\ge d$, where \color{black}
\begin{eqnarray}\label{xi-sigma-u}\small
\bm{\xi}_{hi}^{\sigma}:=\bm{Q}^i_{k}\bm{\sigma}-\bm{\sigma}_{hi}, \
\xi^{tr}_{hb}:= \mathcal{I}_{1}tr(\bm{\sigma})-\sigma_{hb}^{tr}, \
\bm{\xi}_{h}^u:=\{\bm{\xi}_{hi}^u,\bm{\xi}_{hb}^u\}
\end{eqnarray}
   with
   $$\bm{\xi}_{hi}^u:=\bm{Q}^i_{k+1}\bm{u}-\bm{u}_{hi} ,\quad
\bm{\xi}_{hb}^u:=\bm{\mathcal{I}}_{k+1}\bm{u}-\bm{u}_{hb}.
$$
\end{lemma}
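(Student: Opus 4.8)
The plan is to run the standard energy argument for the WG discretization and then call upon the coercivity of Theorem~\ref{th34} to recover control of the full stress norm uniformly in $\lambda$. First I would derive the error equations: subtracting the WG scheme \eqref{wg1new} from the projected identity \eqref{e1100}, and using the definitions \eqref{xi-sigma-u} together with bilinearity of $a_h,b_h$ and $z_h$ (the latter depending on its stress argument only through $tr(\cdot)-\tau^{tr}$), gives
\[
a_h(\bm{\xi}^{\sigma}_{hi},\bm{\tau}_{hi})+z_h(tr(\bm{\xi}^{\sigma}_{hi}),\xi^{tr}_{hb};\bm{\tau}_{hi},\tau^{tr}_{hb})-b_h(\bm{\tau}_{hi},\bm{\xi}^u_h)=E_1(\bm{\sigma},\bm{u};\bm{\tau}_{hi},\tau^{tr}_{hb})
\]
for all $\bm{\tau}_{hi}\in\bm{\Sigma}_{hi},\tau^{tr}_{hb}\in\Sigma^{tr}_{hb}$ (with $z_h\equiv 0$, $E_1$ replaced by $E_0$ and \eqref{e1101} in place of \eqref{e1100} when $k+1\ge d$), while subtracting \eqref{wg2new} from \eqref{e120} gives $-b_h(\bm{\xi}^{\sigma}_{hi},\bm{v}_h)-s_h(\bm{\xi}^u_h,\bm{v}_h)=E_2(\bm{\sigma},\bm{u};\bm{v}_h)$ for all $\bm{v}_h\in\bm{U}_{hi}\times\bm{U}^{D,\bm{0}}_{hb}$; here $\bm{\xi}^u_h$ is itself admissible since $\bm{\mathcal{I}}_{k+1}\bm{u}|_{\Gamma_D}=\bm{\mathcal{I}}_{k+1}\bm{g}_D=\bm{u}_{hb}|_{\Gamma_D}$. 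Choosing $\bm{\tau}_{hi}=\bm{\xi}^{\sigma}_{hi}$, $\tau^{tr}_{hb}=\xi^{tr}_{hb}$, $\bm{v}_h=\bm{\xi}^u_h$ and adding, the $b_h$ terms cancel, leaving
\[
a_h(\bm{\xi}^{\sigma}_{hi},\bm{\xi}^{\sigma}_{hi})+z_h(tr(\bm{\xi}^{\sigma}_{hi}),\xi^{tr}_{hb};tr(\bm{\xi}^{\sigma}_{hi}),\xi^{tr}_{hb})+s_h(\bm{\xi}^u_h,\bm{\xi}^u_h)=E_1+E_2
\]
(and the analogue with $E_0+E_2$ and no $z_h$ when $k+1\ge d$).

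Next I would estimate the right-hand side with Lemma~\ref{lemma43}. Writing $\mathcal{E}^2:=h^{2k+2}(\mu^{-1}|\bm{\sigma}|^2_{k+1}+\mu|\bm{u}|^2_{k+2})$, the bound \eqref{42002} controls $|E_2|$ by $C\mathcal{E}\,s_h(\bm{\xi}^u_h,\bm{\xi}^u_h)^{1/2}$, and \eqref{42003} (resp. \eqref{42001}) controls $|E_1|$ (resp. $|E_0|$) by $C\mathcal{E}\big(\mu^{-1/2}\|\bm{\xi}^{\sigma}_{hi}\|_0+z_h(\cdots)^{1/2}\big)$. Young's inequality then absorbs the $s_h$- and $z_h$-factors into the left-hand side, leaving
\[
a_h(\bm{\xi}^{\sigma}_{hi},\bm{\xi}^{\sigma}_{hi})+z_h(\cdots)+s_h(\bm{\xi}^u_h,\bm{\xi}^u_h)\lesssim\mathcal{E}^2+\mathcal{E}\,\mu^{-1/2}\|\bm{\xi}^{\sigma}_{hi}\|_0 .
\]
The last term cannot be closed from $a_h$ alone, because $a_h(\bm{\xi}^{\sigma}_{hi},\bm{\xi}^{\sigma}_{hi})$ controls only the deviatoric part of $\bm{\xi}^{\sigma}_{hi}$ uniformly in $\lambda$, so Theorem~\ref{th34} must be invoked to bound $\|\bm{\xi}^{\sigma}_{hi}\|_0$.

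To apply the coercivity \eqref{stability-a} I would build an auxiliary boundary tensor $\bm{\xi}^{\sigma}_{hb}$ and verify hypotheses \eqref{c3.9}--\eqref{c3.11} for the pair $(\bm{\xi}^{\sigma}_{hi},\bm{\xi}^{\sigma}_{hb})$. The natural choice is $\bm{\xi}^{\sigma}_{hb}\bm{n}:=\bm{Q}^b_{k+1}(\bm{\sigma}\bm{n})-\bm{\sigma}_{hb}\bm{n}$ on $\partial\mathcal{T}_h$, where $\bm{\sigma}_{hb}\bm{n}|_{\partial T}:=\bm{\sigma}_{hi}\bm{n}-\alpha(\bm{u}_{hi}-\bm{u}_{hb})$ is the HDG-type numerical flux read off from \eqref{wg2new}/\eqref{wg2} (only the normal trace enters Theorem~\ref{th34}). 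Then: (i) \eqref{pwg2} gives $\nabla_{w,k+1}\cdot\{\bm{Q}^i_k\bm{\sigma},\bm{Q}^b_{k+1}\bm{\sigma}\}=\bm{Q}^i_{k+1}\bm{f}$, and testing \eqref{wg2new}/\eqref{wg2} with $\bm{v}_h=\{\bm{v}_{hi},\bm{0}\}$ shows $\nabla_{w,k+1}\cdot\{\bm{\sigma}_{hi},\bm{\sigma}_{hb}\}=\bm{Q}^i_{k+1}\bm{f}$ as well, so \eqref{c3.9} holds; (ii) testing \eqref{wg2new}/\eqref{wg2} with $\bm{v}_h=\{\bm{0},\bm{v}_{hb}\}$ yields $\sum_T\langle\bm{\sigma}_{hb}\bm{n},\bm{v}_{hb}\rangle_{\partial T}=\langle\bm{g}_N,\bm{v}_{hb}\rangle_{\Gamma_N}$, while the $\bm{H}(div,\Omega)$-continuity of $\bm{\sigma}$, the condition $\bm{\sigma}\bm{n}=\bm{g}_N$ on $\Gamma_N$ and $\bm{v}_{hb}|_{\Gamma_D}=\bm{0}$ give $\sum_T\langle\bm{Q}^b_{k+1}(\bm{\sigma}\bm{n}),\bm{v}_{hb}\rangle_{\partial T}=\langle\bm{g}_N,\bm{v}_{hb}\rangle_{\Gamma_N}$, so \eqref{c3.10} holds for $\bm{\xi}^{\sigma}_{hb}$; (iii) when $\Gamma_N=\emptyset$, the zero-mean condition \eqref{c3.11} for $\bm{\xi}^{\sigma}_{hi}$ follows by testing \eqref{wg1new}/\eqref{wg1} with $\bm{\tau}_{hi}=I$ and using the moment property \eqref{2.5} of $\bm{\mathcal{I}}_{k+1}$. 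With these in hand, \eqref{stability-a} gives $\mu^{-1}\|\bm{\xi}^{\sigma}_{hi}\|_0^2\lesssim a_h(\bm{\xi}^{\sigma}_{hi},\bm{\xi}^{\sigma}_{hi})+\|\alpha^{-1/2}(\bm{\xi}^{\sigma}_{hi}\bm{n}-\bm{\xi}^{\sigma}_{hb}\bm{n})\|_{0,\partial\mathcal{T}_h}^2+z_h(\cdots)$ (the last term only for $k+1<d$); since $\bm{\xi}^{\sigma}_{hi}\bm{n}-\bm{\xi}^{\sigma}_{hb}\bm{n}=\bm{Q}^i_k(\bm{\sigma}\bm{n})-\bm{Q}^b_{k+1}(\bm{\sigma}\bm{n})-\alpha(\bm{u}_{hi}-\bm{u}_{hb})$ and $\bm{u}_{hi}-\bm{u}_{hb}=(\bm{Q}^i_{k+1}\bm{u}-\bm{\mathcal{I}}_{k+1}\bm{u})-(\bm{\xi}^u_{hi}-\bm{\xi}^u_{hb})$, the approximation estimates of Lemmas~\ref{lemma23} and \ref{lemma2.6} bound $\|\alpha^{-1/2}(\bm{\xi}^{\sigma}_{hi}\bm{n}-\bm{\xi}^{\sigma}_{hb}\bm{n})\|_{0,\partial\mathcal{T}_h}^2\lesssim\mathcal{E}^2+s_h(\bm{\xi}^u_h,\bm{\xi}^u_h)$, whence $\mu^{-1}\|\bm{\xi}^{\sigma}_{hi}\|_0^2\lesssim a_h(\bm{\xi}^{\sigma}_{hi},\bm{\xi}^{\sigma}_{hi})+z_h(\cdots)+s_h(\bm{\xi}^u_h,\bm{\xi}^u_h)+\mathcal{E}^2$.

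Finally I would insert this into the term $\mathcal{E}\,\mu^{-1/2}\|\bm{\xi}^{\sigma}_{hi}\|_0$, apply Young's inequality once more with a small parameter to absorb a fixed fraction of $a_h+z_h+s_h$ onto the left, and obtain $a_h(\bm{\xi}^{\sigma}_{hi},\bm{\xi}^{\sigma}_{hi})+z_h(\cdots)+s_h(\bm{\xi}^u_h,\bm{\xi}^u_h)\lesssim\mathcal{E}^2$, which is \eqref{4.19}; the case $k+1\ge d$ is identical with the $z_h$-terms deleted and $E_0$ in place of $E_1$, yielding \eqref{4.199}. I expect the main obstacle to be the third step: producing the auxiliary field $\bm{\xi}^{\sigma}_{hb}$ and verifying that $(\bm{\xi}^{\sigma}_{hi},\bm{\xi}^{\sigma}_{hb})$ meets the hypotheses \eqref{c3.9}--\eqref{c3.11} of Theorem~\ref{th34} --- in particular reading the discrete-divergence constraint \eqref{c3.9} and the orthogonality \eqref{c3.10} out of the WG displacement equation, and disposing of the zero-mean condition \eqref{c3.11} in the pure-Dirichlet case. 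Once the coercivity estimate is available, the remaining Young-inequality bookkeeping is routine.
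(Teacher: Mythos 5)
Your proposal is correct, and its skeleton --- deriving the error equations \eqref{e11-21}--\eqref{e12}, testing with $(\bm{\xi}^{\sigma}_{hi},\xi^{tr}_{hb},\bm{\xi}^u_h)$ so that the $b_h$-terms cancel, and then bounding the right-hand side via Lemma \ref{lemma43} and Young's inequality --- is exactly the paper's. Where you go beyond the paper's own one-line justification is in the treatment of the contribution $h^{k+1}\mu^{1/2}|\bm{u}|_{k+2}\cdot\mu^{-1/2}\|\bm{\xi}^{\sigma}_{hi}\|_0$ coming from \eqref{42001}/\eqref{42003}: since $a_h(\bm{\xi}^{\sigma}_{hi},\bm{\xi}^{\sigma}_{hi})$ controls only $\tfrac{1}{2\mu}\|(\bm{\xi}^{\sigma}_{hi})_D\|_0^2+\tfrac{1}{d(d\lambda+2\mu)}\|tr(\bm{\xi}^{\sigma}_{hi})\|_0^2$, the factor $\mu^{-1/2}\|\bm{\xi}^{\sigma}_{hi}\|_0$ cannot be absorbed uniformly in $\lambda$ by Young's inequality alone, and the paper's proof does not explain how this is done. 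You close the gap by invoking the coercivity \eqref{stability-a} of Theorem \ref{th34} together with the numerical flux and the verification of \eqref{c3.9}--\eqref{c3.11} from Lemma \ref{lemma45}, plus the computation \eqref{in461} relating $\|\alpha^{-1/2}(\bm{\xi}^{\sigma}_{hi}\bm{n}-\bm{\xi}^{\sigma}_{hb}\bm{n})\|^2_{0,\partial\mathcal{T}_h}$ to $s_h(\bm{\xi}^u_h,\bm{\xi}^u_h)$ --- machinery the paper only deploys afterwards, in Theorem \ref{th48}. This front-loading is legitimate and non-circular, since the hypotheses of Theorem \ref{th34} are checked using only the scheme and the exact problem, not the conclusion of the present lemma; the only structural cost is that Lemma \ref{lemma45} must logically precede this lemma rather than follow it. In short, your argument reproduces the paper's route and in addition makes explicit (and repairs) the one absorption step that the paper's proof glosses over.
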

\begin{proof}
From  the relations (\ref{wg1new})-(\ref{wg2}) and (\ref{e1101})-(\ref{e120}), we have,
 for all $\bm{\tau}_{hi}\in \bm{\Sigma}_{hi} $,  $\tau^{tr}_{hb}\in\Sigma_{hb}^{tr}$, and $ \bm{v}_h=\{\bm{v}_{hi},\bm{v}_{hb}\}\in \bm{U}_{h}^{\bm{0}}$,
\begin{eqnarray}\footnotesize
&&a_h(\bm{\xi}_{hi}^{\sigma},\bm{\tau}_{hi})
+{z}_h(\bm{\xi}^{\sigma}_{hi},\xi^{tr}_{hb};\bm{\tau}_{hi},\tau^{tr}_{hb})
-b_h(\bm{\tau}_{hi},\bm{\xi}^u_h)\nonumber\\
&&\hskip3cm= E_1(\bm{\sigma},\bm{u};\bm{\tau}_{hi},\tau^{tr}_{hb})\quad \text{if } k+1<d,\label{e11-21}\\
&&a_h(\bm{\xi}_{hi}^{\sigma},\bm{\tau}_{hi})
-b_h(\bm{\tau}_{hi},\bm{\xi}^u_h)= E_0(\bm{u};\bm{\tau}_{hi})\quad \text{if } k+1\ge d,\label{e12-22}\\
&& -b_h(\bm{\xi}_{hi}^{\sigma},\bm{v}_{h})-s_h(\bm{\xi}^u_h,\bm{v}_h)= E_2(\bm{\sigma},\bm{u};\bm{v}_{h}).\label{e12}
 \end{eqnarray}
Then the desired estimates \eqref{4.19}-\eqref{4.199} follow from    Lemmas \ref{lemmaKorn2}-\ref{lemma43}  and Young's inequality.
\end{proof}

\begin{lemma}\label{lemma45} Under the conditions of Lemma \ref{lemma44}, let  $\bm{\sigma}_{hb}\in [L^2(\partial \mathcal{T}_h)]^{d\times d}$ satisfy
 \begin{eqnarray} \label{defhb}
(\bm{\sigma}_{hb}\bm n)|_{\partial T}:=(\bm{\sigma}_{hi}\bm{n})|_{\partial T}
                                       -\alpha(\bm{u}_{hi}|_{\partial T}-\bm{u}_{hb}), \quad \forall T\in \mathcal{T}_h.
  \end{eqnarray}
Then
it holds
 \begin{eqnarray}
 &&\nabla_{w,\color{black}k+1\color{black}}\cdot\{\bm{Q}^i_k\bm{\sigma}-\bm{\sigma}_{hi}, \color{black}\bm{{Q}}^b_{k+1}\color{black}\bm{\sigma} -\bm{\sigma}_{hb}\}=\bm{0},\label{AS1}\\
&&\langle \color{black}\bm{Q}^b_{k+1}\color{black}(\bm{\sigma}\bm{n}) -\bm{\sigma}_{hb}\bm n,\bm{v}_{hb}\rangle_{\partial\mathcal{T}_h}=0,
 \forall \bm{v}_{hb}\in \bm{U}^{D,\bm{0}}_{hb},\label{AS2}\\
 &&tr(\bm{Q}^i_k\bm{\sigma}-\bm{\sigma}_{hi})\in L^2_0(\Omega) \text{ if }\Gamma_N=\emptyset.\label{AS3}
  \end{eqnarray}
\end{lemma}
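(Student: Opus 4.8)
The plan is to verify \eqref{AS1}, \eqref{AS2}, \eqref{AS3} one at a time, in each case by feeding a tailored test function into the WG equations \eqref{wg1new}--\eqref{wg2new} / \eqref{wg1}--\eqref{wg2} and then substituting the definition \eqref{defhb}. Two preliminary remarks: only the normal component $\bm{\sigma}_{hb}\bm{n}$ ever occurs, so it is harmless that \eqref{defhb} does not pin down the tangential part of $\bm{\sigma}_{hb}$; and since $(\bm{Q}^b_{k+1}\bm{\sigma})\bm{n}=\bm{Q}^b_{k+1}(\bm{\sigma}\bm{n})$ (as $\bm{n}$ is constant on each edge/face), $\{\bm{Q}^i_k\bm{\sigma}-\bm{\sigma}_{hi},\ \bm{Q}^b_{k+1}\bm{\sigma}-\bm{\sigma}_{hb}\}$ is a legitimate argument of $\nabla_{w,k+1}\cdot$.

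For \eqref{AS1}: by \eqref{pwg2} with $\bm{\tau}=\bm{\sigma}$ and $\nabla\cdot\bm{\sigma}=\bm{f}$ we have $\nabla_{w,k+1}\cdot\{\bm{Q}^i_k\bm{\sigma},\bm{Q}^b_{k+1}\bm{\sigma}\}=\bm{Q}^i_{k+1}\bm{f}$, so by linearity it suffices to show $\nabla_{w,k+1}\cdot\{\bm{\sigma}_{hi},\bm{\sigma}_{hb}\}=\bm{Q}^i_{k+1}\bm{f}$. Fix $T$ and $\bm{\phi}_{hi}\in[\mathbb{P}_{k+1}(T)]^d$ and test \eqref{wg2} with $\{\bm{v}_{hi},\bm{0}\}\in\bm{U}_h^{\bm{0}}$, $\bm{v}_{hi}=\bm{\phi}_{hi}$ on $T$ and $\bm{v}_{hi}=\bm{0}$ elsewhere; since $\bm{v}_{hb}=\bm{0}$ the weak gradient vanishes off $T$, and using $\bm{\sigma}_{hi}|_T\in[\mathbb{P}_k(T)]^{d\times d}$ as test tensor in the definition of $\nabla_{w,k}$ the equation collapses to $(\nabla\cdot\bm{\sigma}_{hi},\bm{\phi}_{hi})_T-\langle\alpha(\bm{u}_{hi}-\bm{u}_{hb}),\bm{\phi}_{hi}\rangle_{\partial T}=(\bm{f},\bm{\phi}_{hi})_T$. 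On the other hand, expanding $(\nabla_{w,k+1}\cdot\{\bm{\sigma}_{hi},\bm{\sigma}_{hb}\},\bm{\phi}_{hi})_T$, integrating by parts, and using \eqref{defhb} to replace $(\bm{\sigma}_{hi}-\bm{\sigma}_{hb})\bm{n}$ by $\alpha(\bm{u}_{hi}-\bm{u}_{hb})$ produces exactly $(\bm{f},\bm{\phi}_{hi})_T=(\bm{Q}^i_{k+1}\bm{f},\bm{\phi}_{hi})_T$; letting $\bm{\phi}_{hi}$ range over $[\mathbb{P}_{k+1}(T)]^d$ identifies the two polynomials. For \eqref{AS2}: test \eqref{wg2} with $\{\bm{0},\bm{v}_{hb}\}$, $\bm{v}_{hb}\in\bm{U}^{D,\bm{0}}_{hb}$; then $b_h=\langle\bm{\sigma}_{hi}\bm{n},\bm{v}_{hb}\rangle_{\partial\mathcal{T}_h}$, $s_h=-\langle\alpha(\bm{u}_{hi}-\bm{u}_{hb}),\bm{v}_{hb}\rangle_{\partial\mathcal{T}_h}$, and with \eqref{defhb} the equation becomes $\langle\bm{\sigma}_{hb}\bm{n},\bm{v}_{hb}\rangle_{\partial\mathcal{T}_h}=\langle\bm{g}_N,\bm{v}_{hb}\rangle_{\Gamma_N}$. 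Since $\bm{v}_{hb}$ is continuous and piecewise $\mathbb{P}_{k+1}$, $\bm{\sigma}\in\bm{H}(div,\Omega)$ has single-valued normal trace, $\bm{\sigma}\bm{n}=\bm{g}_N$ on $\Gamma_N$, and $\bm{v}_{hb}|_{\Gamma_D}=\bm{0}$, one also has $\langle\bm{Q}^b_{k+1}(\bm{\sigma}\bm{n}),\bm{v}_{hb}\rangle_{\partial\mathcal{T}_h}=\langle\bm{\sigma}\bm{n},\bm{v}_{hb}\rangle_{\partial\Omega}=\langle\bm{g}_N,\bm{v}_{hb}\rangle_{\Gamma_N}$; subtracting gives \eqref{AS2}.

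For \eqref{AS3}, suppose $\Gamma_N=\emptyset$, so $\partial\Omega=\Gamma_D$ and $\bm{u}_{hb}|_{\Gamma_D}=\bm{\mathcal{I}}_{k+1}\bm{g}_D$. Take $\bm{\tau}_{hi}=I$ in \eqref{wg1}/\eqref{wg1new}, and, when $k+1<d$, $\tau^{tr}_{hb}\equiv d=tr(I)$ on the interior edges, so that $z_h(\bm{\sigma}_{hi},\sigma^{tr}_{hb};I,d)=0$. Using $tr(\mathcal{A}\bm{\tau})=(2\mu+d\lambda)^{-1}tr(\bm{\tau})$ gives $a_h(\bm{\sigma}_{hi},I)=(2\mu+d\lambda)^{-1}\int_\Omega tr\bm{\sigma}_{hi}$, while testing the definition of $\nabla_{w,k}\bm{u}_h$ with the constant tensor $I$, summing over $T$, and telescoping the single-valued $\bm{u}_{hb}$ gives $b_h(I,\bm{u}_h)=\langle\bm{u}_{hb}\cdot\bm{n},1\rangle_{\Gamma_D}=\langle\bm{\mathcal{I}}_{k+1}\bm{g}_D\cdot\bm{n},1\rangle_{\Gamma_D}$; hence $\int_\Omega tr\bm{\sigma}_{hi}=(2\mu+d\lambda)\langle\bm{\mathcal{I}}_{k+1}\bm{g}_D\cdot\bm{n},1\rangle_{\Gamma_D}$. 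On the other hand $tr(\bm{Q}^i_k\bm{\sigma})=Q^i_k(tr\bm{\sigma})$ has the same integral as $tr\bm{\sigma}=(2\mu+d\lambda)\nabla\cdot\bm{u}$ (from $\mathcal{A}\bm{\sigma}=\bm{\epsilon}(\bm{u})$), so by the divergence theorem $\int_\Omega tr(\bm{Q}^i_k\bm{\sigma})=(2\mu+d\lambda)\langle\bm{g}_D\cdot\bm{n},1\rangle_{\Gamma_D}$. Subtracting, $\int_\Omega tr(\bm{Q}^i_k\bm{\sigma}-\bm{\sigma}_{hi})=(2\mu+d\lambda)\langle(\bm{g}_D-\bm{\mathcal{I}}_{k+1}\bm{g}_D)\cdot\bm{n},1\rangle_{\Gamma_D}$, which vanishes because $\bm{\mathcal{I}}_{k+1}$ reproduces the mean of $\bm{g}_D$ on each boundary face of $\Gamma_D$ and $\bm{n}$ is constant there, i.e. the face-by-face form of \eqref{2.5}; this proves \eqref{AS3}.

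The steps for \eqref{AS1} and \eqref{AS2} are "consistency of the discrete normal flux" and go through purely from the scheme and \eqref{defhb}, so I expect them to be routine. The delicate point --- the one I would be most careful about --- is \eqref{AS3}: it cannot be recovered from \eqref{AS1}--\eqref{AS2} and the equations alone (every such attempt collapses to the integration-by-parts identity $\int_\Omega tr\bm{\sigma}=\langle\bm{\sigma}\bm{n},\bm{x}\rangle_{\partial\Omega}-(\bm{f},\bm{x})$), so it genuinely rests on the boundary interpolant $\bm{\mathcal{I}}_{k+1}\bm{g}_D$ preserving $\int_{\Gamma_D}\bm{g}_D\cdot\bm{n}$. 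I would therefore make sure the construction in Appendix \ref{A1} delivers this face-wise integral-preservation, not merely the global statement \eqref{2.5}. A minor additional care point, already indicated, is that in the stabilized case $k+1<d$ one must neutralize the $z_h$-term via the choice $\tau^{tr}_{hb}=tr(I)$ before testing with $\bm{\tau}_{hi}=I$.
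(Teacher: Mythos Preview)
Your proof is correct and follows essentially the same route as the paper's: for \eqref{AS1} both combine the commutativity property \eqref{pwg2} with testing the second WG equation, for \eqref{AS2} both test with $\{\bm{0},\bm{v}_{hb}\}$ and use single-valuedness of $\bm{\sigma}\bm{n}$ across interior faces, and for \eqref{AS3} both test the first WG equation with $\bm{\tau}_{hi}=I$ and compare with the continuous identity $\int_\Omega tr(\bm{\sigma})=(2\mu+d\lambda)\langle\bm{g}_D,\bm{n}\rangle_{\Gamma_D}$.

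Two small differences worth noting. First, in the stabilized case $k+1<d$ you kill the $z_h$-term in one step by choosing $\tau^{tr}_{hb}=tr(I)=d$, whereas the paper does it in two: it first takes $\bm{\tau}_{hi}=I$ with arbitrary $\tau^{tr}_{hb}$, picking up a residual $\langle\beta(tr(\bm{\sigma}_{hi})-\sigma^{tr}_{hb}),d-\tau^{tr}_{hb}\rangle_{\partial\mathcal{T}_h/\partial\Omega}$, and then separately tests with $\bm{\tau}_{hi}=\bm{0}$ to show this residual vanishes for every $\tau^{tr}_{hb}\in\Sigma^{tr}_{hb}$ (hence in particular for $\tau^{tr}_{hb}=d$). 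Your shortcut is cleaner. Second, your caution about needing face-wise (not merely global) mean preservation from $\bm{\mathcal{I}}_{k+1}$, so that the $\bm{n}$-weighted identity $\langle\bm{\mathcal{I}}_{k+1}\bm{g}_D,\bm{n}\rangle_{\Gamma_D}=\langle\bm{g}_D,\bm{n}\rangle_{\Gamma_D}$ follows, is well taken; the paper simply cites \eqref{2.5} at this point. The appendix construction does enforce $\langle\mathcal{I}_{k+1}v,1\rangle_E=\langle v,1\rangle_E$ face by face when $k+1\ge d$ (case (4a)), which makes the argument go through since $\bm{n}$ is constant on each face; for $k+1<d$ only patch-wise preservation on $\Gamma_D$ is enforced, so your flag is appropriate---but this subtlety is shared with the paper's own proof, not a gap specific to yours.
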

\begin{proof}
We first show \eqref{AS1}.  Taking $\bm{v}_{hi}=\bm{0}$ and $\bm{v}_{hb}\in \bm{U}^{D,\bm{0}}_{hb}$ in \eqref{wg2new} or \eqref{wg2},  we  get
\begin{eqnarray}
-\langle\bm{\sigma}_{hi}\bm{n},\bm{v}_{hb} \rangle_{\partial\mathcal{T}_h}
+\langle\alpha( \bm{u}_{hi}-\bm{u}_{hb} ),\bm{v}_{hb}\rangle_{\partial\mathcal{T}_h}=-\langle \bm{g}_N,\bm{v}_{hb}\rangle_{\Gamma_N},
 \end{eqnarray}
 which, together with  \eqref{defhb},  yields
 \begin{eqnarray}
 \langle\bm{\sigma}_{hb}\bm n,\bm{v}_{hb}\rangle_{\partial\mathcal{T}_h}=\langle \bm{g}_N,\bm{v}_{hb}\rangle_{\Gamma_N}.\label{hb}
 \end{eqnarray}
On one hand, for all $\color{black}\bm{v}_{hi}\in\bm{U}_{hi}\color{black}$, $\bm{v}_{hb}\in \bm{U}^{D,\bm{0}}_{hb}$,  by the definitions of the discrete weak divergence and weak gradient,  we have
\begin{eqnarray*}
(\nabla_{w,\color{black}k+1\color{black}}\cdot\{\bm{\sigma}_{hi},\bm{\sigma}_{hb}\},\bm{v}_{hi})
&=&-(\bm{\sigma}_{hi},\nabla_h\bm{v}_{hi})+\langle \bm{\sigma}_{hb}\bm n,\bm{v}_{hi}\rangle_{\partial\mathcal{T}_h}\nonumber\\
&=&(\nabla_h\cdot\bm{\sigma}_{hi},\bm{v}_{hi})+\langle \bm{\sigma}_{hb}\bm n-\bm{\sigma}_{hi}\bm{n},\bm{v}_{hi}\rangle_{\partial\mathcal{T}_h}\nonumber\\
&=&-(\bm{\sigma}_{hi},\nabla_{w,k}\bm{v}_{hi})+\langle \bm{\sigma}_{hb}\bm n-\bm{\sigma}_{hi}\bm{n},\bm{v}_{hi}\rangle_{\partial\mathcal{T}_h}
\nonumber\\
&&+\langle \bm{\sigma}_{hi}\bm{n},\bm{v}_{hb}\rangle_{\partial\mathcal{T}_h},
 \end{eqnarray*}
which, together with (\ref{hb}), \eqref{defhb}, and (\ref{wg2}), implies
\begin{eqnarray}
(\nabla_{w,\color{black}k+1\color{black}}\cdot\{\bm{\sigma}_{hi},\bm{\sigma}_{hb}\},\bm{v}_{hi})
&=&-(\bm{\sigma}_{hi},\nabla_{w,k}\bm{v}_{hi})+\langle \color{black}\bm{\sigma}_{hb}\bm{n}\color{black}-\bm{\sigma}_{hi}\bm{n},\bm{v}_{hi}-\bm{v}_{hb}\rangle_{\partial\mathcal{T}_h}\nonumber\\
&&+\langle \bm{g}_N,\bm{v}_{hb}\rangle_{\Gamma_N}
\nonumber\\
&=&-(\bm{\sigma}_{hi},\nabla_{w,k}\bm{v}_{hi})-\langle \alpha(\bm{u}_{hi}-\bm{u}_{hb}),\bm{v}_{hi}-\bm{v}_{hb}\rangle_{\partial\mathcal{T}_h}\nonumber\\
&&+\langle \bm{g}_N,\bm{v}_{hb}\rangle_{\Gamma_N}
\nonumber\\
&=&(\bm{f},\bm{v}_{hi}),\label{43200}
\end{eqnarray}
On the other hand,
from the  commutativity \color{black} property (\ref{pwg2}) and the fact $\nabla\cdot\bm{\sigma}=\bm{f}$ in (\ref{o1}), it follows
\begin{eqnarray*}
(\nabla_{w,\color{black}k+1\color{black}}\cdot\{\bm{Q}^i_k\bm{\sigma},\bm{Q}^b_{\color{black}k+1\color{black}}\bm{\sigma}\},\bm{v}_{hi})
=(\bm{Q}^i_{\color{black}k+1\color{black}}\nabla\cdot\bm{\sigma},\bm{v}_{hi})=(\nabla\cdot\bm{\sigma},\bm{v}_{hi})=(\bm{f},\bm{v}_{hi}),
\end{eqnarray*}
which, together with  (\ref{43200}), yields (\ref{AS1}).

 By \eqref{hb} and the boundary condition  $\bm{\sigma}\bm{n} =\bm{g}_N$  on  $\Gamma_N$ we easily get
\begin{eqnarray*}
\langle\color{black}\bm{Q}^b_{k+1}\color{black}(\bm{\sigma}\bm{n}) -\bm{\sigma}_{hb}\bm{n},\bm{v}_{hb}\rangle_{\partial\mathcal{T}_h}=
\langle\bm{\sigma}\bm{n},\bm{v}_{hb}\rangle_{\partial\mathcal{T}_h}-\langle \bm{g}_N,\bm{v}_{hb}\rangle_{\Gamma_N}
=0,
\end{eqnarray*}
i.e.     \eqref{AS2} holds.

The work left is to prove \eqref{AS3}. When $\Gamma_N=\emptyset$, we use the first relation in \eqref{o1} to get
\begin{eqnarray}\small
\frac{1}{2\mu+d\lambda}(tr(\bm{Q}^i_k\bm{\sigma}),1)=\frac{1}{2\mu+d\lambda}(tr(\bm{\sigma}),1)=(\mathcal{A}\bm{\sigma},I)=(\bm{\epsilon}(\bm{u}),{I})=\langle \bm{g}_D,\bm{n} \rangle_{\partial\Omega}.\label{429}
\end{eqnarray}
Since $$ (I,\nabla_{w,k}\bm{u}_{h})= \langle \bm{\mathcal{I}}_{k+1}\bm{g}_D,\bm{n} \rangle_{\partial\Omega},$$
 taking $\bm{\tau}_{hi}=I$ in \eqref{wg1}  yields
\begin{eqnarray*}
\frac{1}{2\mu+d\lambda}(tr(\bm{\sigma}_{hi}),1)&=&(\mathcal{A}\bm{\sigma}_{hi},{I})
 =\langle \bm{\mathcal{I}}_{k+1}\bm{g}_D,\bm{n} \rangle_{\partial\Omega},
 \end{eqnarray*}
which, together with  \eqref{2.5}, leads to the desired relation  \eqref{AS3} for  the case of $k+1\ge d$.

For the case of $k+1<d$,   we take $\bm{\tau}_{hi}=I$ in \eqref{wg1new}   to get
\begin{eqnarray}
\frac{1}{2\mu+d\lambda}(tr(\bm{\sigma}_{hi}),1)&=&(\mathcal{A}\bm{\sigma}_{hi},{I}) \nonumber\\
  &=&\langle \bm{\mathcal{I}}_{k+1}\bm{g}_D,\bm{n} \rangle_{\partial\Omega} \nonumber\\
  && \quad
-\langle\beta (tr(\bm{\sigma}_{hi})-\sigma^{tr}_{hb}),   d- \tau^{tr}_{hb}
  \rangle_{\partial\mathcal{T}_h/\partial\Omega},
\end{eqnarray}
where we have used the fact that  $tr(I)=d$.
 On the other hand, taking
$\bm{\tau}_{hi}=\bm{0}$ in \eqref{wg1new} yields
\begin{eqnarray*}
\langle\beta (tr(\bm{\sigma}_{hi})-\sigma^{tr}_{hb}),\tau^{tr}_{hb}
  \rangle_{\partial\mathcal{T}_h/\partial\Omega}=0,\ \forall  \tau_{hb}^{tr}\in \Sigma_{hb}^{tr},
\end{eqnarray*}
which means
\begin{eqnarray}
\langle\beta (tr(\bm{\sigma}_{hi})-\sigma^{tr}_{hb}),d-\tau^{tr}_{hb}
  \rangle_{\partial\mathcal{T}_h/\partial\Omega}=0,\label{430}
\end{eqnarray}
since $d\in\Sigma_{hb}^{tr}$.
As a result,
  \eqref{AS3} follows from \eqref{2.5} and  \eqref{429}-\eqref{430}.

%
\end{proof}

Finally, we shall derive the following  error estimates for the stress and displacement approximations.

\begin{theorem} \label{th48} Let  $(\bm{\sigma},\bm{u})\in(\bm\Sigma\bigcap [H^{k+1}(\Omega)]^{d\times d})\times (\bm U\bigcap [H^{k+2}(\Omega)]^{d})$  be the solution to the model $(\ref{o1})$, and let
$\left(\bm{\sigma}_{hi}, \sigma_{hb}^{tr}, \bm{u}_h:=\{\bm{u}_{hi},\bm{u}_{hb}\}\right)\in  \bm{\Sigma}_{hi}\times \Sigma_{hb}^{tr}\times \bm{U}_{h}^{\bm{g}_D}$ and $\left(\bm{\sigma}_{hi},  \bm{u}_h\right)\in  \bm{\Sigma}_{hi}\times \bm{U}_{h}^{\bm{g}_D}$ be the solutions to the
 WG schemes \eqref{wg1new}-\eqref{wg2new} and \eqref{wg1}-\eqref{wg2}, respectively. Then it holds the following error estimates:
\begin{eqnarray}
\|\bm{\sigma}-\bm{\sigma}_{hi}\|_0&\lesssim&  h^{k+1}(|\bm{\sigma}|_{k+1}+\mu|\bm{u}|_{k+2}), \label{434}\\
\|\nabla\bm{u}-\nabla_h\bm{u}_{hi}\|_0&\lesssim& h^{k+1}(\mu^{-1}|\bm{\sigma}|_{k+1}+|\bm{u}|_{k+2} )\label{in47g}.
\end{eqnarray}
\end{theorem}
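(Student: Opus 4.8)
The plan is to split each error into a projection/interpolation part and a ``discrete'' part expressed through the quantities $\bm{\xi}^{\sigma}_{hi}$, $\xi^{tr}_{hb}$, $\bm{\xi}^{u}_{h}$ of Lemma \ref{lemma44}: the former is controlled by Lemmas \ref{lemma23} and \ref{lemma2.6}, and the latter by combining the energy estimates \eqref{4.19}--\eqref{4.199} with the stability results of Section 3. Since none of these ingredients involves $\lambda$, the resulting constants are automatically uniform in $\lambda$; throughout, the case $k+1<d$ carries the extra stabilization $z_h$ while the case $k+1\ge d$ does not, and I would carry the two cases in parallel.

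First I would prove the stress estimate \eqref{434}. Writing $\bm{\sigma}-\bm{\sigma}_{hi}=(\bm{\sigma}-\bm{Q}^i_k\bm{\sigma})+\bm{\xi}^{\sigma}_{hi}$, the first term is $\lesssim h^{k+1}|\bm{\sigma}|_{k+1}$ by \eqref{pr1}. For $\bm{\xi}^{\sigma}_{hi}$ I would introduce the auxiliary boundary flux $\bm{\sigma}_{hb}$ from \eqref{defhb}; Lemma \ref{lemma45} shows that $(\bm{\xi}^{\sigma}_{hi},\bm{Q}^b_{k+1}\bm{\sigma}-\bm{\sigma}_{hb})$ satisfies \eqref{c3.9}--\eqref{c3.11}, so the coercivity \eqref{stability-a} of Theorem \ref{th34} applies and bounds $\|\bm{\xi}^{\sigma}_{hi}\|_0^2$ by $\mu\,a_h(\bm{\xi}^{\sigma}_{hi},\bm{\xi}^{\sigma}_{hi})$ plus $\mu\|\alpha^{-1/2}(\bm{\xi}^{\sigma}_{hi}\bm n-(\bm{Q}^b_{k+1}\bm{\sigma}-\bm{\sigma}_{hb})\bm n)\|^2_{0,\partial\mathcal{T}_h}$, plus (when $k+1<d$) $\mu\|\beta^{1/2}(tr(\bm{\xi}^{\sigma}_{hi})-\xi^{tr}_{hb})\|^2_{\partial\mathcal{T}_h/\partial\Omega}$. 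The first and third summands are $\mu$ times the left-hand side of \eqref{4.19}/\eqref{4.199}, hence $\lesssim h^{2k+2}(|\bm{\sigma}|^2_{k+1}+\mu^2|\bm{u}|^2_{k+2})$. For the second summand I would use \eqref{defhb} to rewrite $\bm{\xi}^{\sigma}_{hi}\bm n-(\bm{Q}^b_{k+1}\bm{\sigma}-\bm{\sigma}_{hb})\bm n=(\bm{Q}^i_k\bm{\sigma}-\bm{Q}^b_{k+1}\bm{\sigma})\bm n-\alpha(\bm{u}_{hi}-\bm{u}_{hb})$ on $\partial\mathcal{T}_h$; inserting $\bm{\sigma}$ and using $\alpha^{-1/2}\simeq\mu^{-1/2}h_E^{1/2}$ together with \eqref{pr111} and \eqref{pr11} controls the first piece by $\mu^{-1/2}h^{k+1}|\bm{\sigma}|_{k+1}$, while the second piece gives $\|\alpha^{1/2}(\bm{u}_{hi}-\bm{u}_{hb})\|_{0,\partial\mathcal{T}_h}\le s_h(\bm{\xi}^{u}_h,\bm{\xi}^{u}_h)^{1/2}+\|\alpha^{1/2}(\bm{Q}^i_{k+1}\bm{u}-\bm{\mathcal{I}}_{k+1}\bm{u})\|_{0,\partial\mathcal{T}_h}$, which by \eqref{4.19}/\eqref{4.199}, Lemma \ref{lemma23} and Lemma \ref{lemma2.6} is $\lesssim h^{k+1}(\mu^{-1/2}|\bm{\sigma}|_{k+1}+\mu^{1/2}|\bm{u}|_{k+2})$. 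Multiplying by $\mu$, collecting terms and taking square roots yields \eqref{434}.

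Next I would prove the displacement estimate \eqref{in47g}. Writing $\nabla\bm{u}-\nabla_h\bm{u}_{hi}=(\nabla\bm{u}-\nabla_h\bm{Q}^i_{k+1}\bm{u})+\nabla_h\bm{\xi}^{u}_{hi}$, the first term is $\lesssim h^{k+1}|\bm{u}|_{k+2}$ by \eqref{pr1}. Since $\bm{\xi}^{u}_h\in\bm{U}^{\bm{0}}_h$, Lemma \ref{lemmaKorn2} bounds $\|\nabla_h\bm{\xi}^{u}_{hi}\|_0$ by $\|\bm{\epsilon}_h(\bm{\xi}^{u}_{hi})\|_0$ plus $(\sum_Th_T^{-1}\|\bm{\xi}^{u}_{hi}-\bm{\xi}^{u}_{hb}\|^2_{0,\partial T})^{1/2}\simeq\mu^{-1/2}s_h(\bm{\xi}^{u}_h,\bm{\xi}^{u}_h)^{1/2}$, and the Cauchy--Schwarz/trace-inverse argument underlying \eqref{pwg3} bounds $\|\bm{\epsilon}_h(\bm{\xi}^{u}_{hi})\|_0$ by $\|\bm{\epsilon}_{w,k}(\bm{\xi}^{u}_h)\|_0+\mu^{-1/2}s_h(\bm{\xi}^{u}_h,\bm{\xi}^{u}_h)^{1/2}$; by \eqref{4.19}/\eqref{4.199} the $s_h$-terms are $\lesssim h^{k+1}(\mu^{-1}|\bm{\sigma}|_{k+1}+|\bm{u}|_{k+2})$. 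It remains to bound $\|\bm{\epsilon}_{w,k}(\bm{\xi}^{u}_h)\|_0$, for which I would use the inf--sup inequality of Theorem \ref{th31}, namely $\|\bm{\epsilon}_{w,k}(\bm{\xi}^{u}_h)\|_0\le\sup_{\bm{\tau}_{hi}\in\bm{\Sigma}_{hi}}b_h(\bm{\tau}_{hi},\bm{\xi}^{u}_h)/\|\bm{\tau}_{hi}\|_0$. Inserting the error equations \eqref{e11-21}/\eqref{e12-22} (choosing $\tau^{tr}_{hb}=0$ when $k+1<d$), then bounding $a_h(\bm{\xi}^{\sigma}_{hi},\bm{\tau}_{hi})$ by the continuity \eqref{continuity-a}, the $z_h$-term by $z_h(\bm{\xi}^{\sigma}_{hi},\xi^{tr}_{hb};\bm{\xi}^{\sigma}_{hi},\xi^{tr}_{hb})^{1/2}\,\mu^{-1/2}\|\bm{\tau}_{hi}\|_0$ (using $\|\beta^{1/2}tr(\bm{\tau}_{hi})\|_{\partial\mathcal{T}_h}\lesssim\mu^{-1/2}\|\bm{\tau}_{hi}\|_0$, which follows from $\beta\simeq\mu^{-1}h_E$ and Lemma \ref{lemma21}), and $E_0$, $E_1$ by Lemma \ref{lemma43}, one divides by $\|\bm{\tau}_{hi}\|_0$ and invokes the already-proven bound \eqref{434} on $\|\bm{\xi}^{\sigma}_{hi}\|_0$ together with \eqref{4.19}/\eqref{4.199} to obtain $\|\bm{\epsilon}_{w,k}(\bm{\xi}^{u}_h)\|_0\lesssim h^{k+1}(\mu^{-1}|\bm{\sigma}|_{k+1}+|\bm{u}|_{k+2})$. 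Combining the three estimates gives \eqref{in47g}.

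The main obstacle is the ordering and the $\mu$-bookkeeping rather than any single hard estimate: the stress bound \eqref{434} must be obtained first, and it rests essentially on verifying the hypotheses \eqref{c3.9}--\eqref{c3.11} of the coercivity \eqref{stability-a} through Lemma \ref{lemma45} and the auxiliary flux \eqref{defhb}, and on weighting the $\mu$-powers so that the $\mu\|\alpha^{-1/2}(\cdot)\|^2$ contribution is absorbed by the energy estimate; only afterwards can $\|\bm{\xi}^{\sigma}_{hi}\|_0$ be fed through the inf--sup condition and the error equations into the bound for $\|\bm{\epsilon}_{w,k}(\bm{\xi}^{u}_h)\|_0$. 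A secondary point requiring care is keeping the cases $k+1<d$ and $k+1\ge d$ separate, since only the former produces the $z_h$ contributions that must be tracked through both \eqref{434} and \eqref{in47g}.
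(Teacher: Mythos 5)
Your proposal is correct and follows essentially the same route as the paper's proof: the stress bound via the auxiliary flux \eqref{defhb}, Lemma \ref{lemma45}, the coercivity \eqref{stability-a} and the boundary-term estimate culminating in \eqref{in461}; then the displacement bound via Lemma \ref{lemmaKorn2}, \eqref{pwg3}, the inf--sup condition of Theorem \ref{th31} and the error equations with $\tau^{tr}_{hb}=0$. The only (harmless) cosmetic difference is that you feed the already-proven stress estimate \eqref{434} through the continuity \eqref{continuity-a} to handle $a_h(\bm{\xi}^{\sigma}_{hi},\bm{\tau}_{hi})$, whereas the paper can get by with the energy bound of Lemma \ref{lemma44} alone at that step.
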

\begin{proof} Let $\bm{\sigma}_{hb}$ be  the same as   in  (\ref{defhb}),  and set
$$
\bm{\xi}_{hb}^{\sigma}:=\bm{Q}^b_{\color{black}k+1\color{black}}(\bm{\sigma}\bm{n})\otimes\bm{n}-\bm{\sigma}_{hb},$$
where
\begin{eqnarray*}
 \bm{s}\otimes\bm{r}:=\left(
   \begin{aligned}
s_1r_1&&\cdots&&s_1r_d\\
\vdots&& &&\vdots\\
s_dr_1&&\cdots&&s_dr_d
  \end{aligned}
\right) \quad \text{ for } \bm s=(s_1,\cdots,s_d)^T, \bm r=(r_1,\cdots,r_d)^T.
 \end{eqnarray*}
  Recall from \eqref{xi-sigma-u} that $$\bm{\xi}_{hi}^{\sigma}=\bm{Q}^i_{k}\bm{\sigma}-\bm{\sigma}_{hi},\quad
\xi^{tr}_{hb}= \mathcal{I}_{1}tr(\bm{\sigma})-\sigma_{hb}^{tr}, \
\bm{\xi}_{h}^u=\{\bm{Q}^i_{k+1}\bm{u}-\bm{u}_{hi},\bm{\mathcal{I}}_{k+1}\bm{u}-\bm{u}_{hb}\}.$$
Then, in view of  
 \eqref{227}, Lemma \ref{lemma23}, and Lemma \ref{lemma2.6}, we have
\begin{eqnarray}
&&\|\alpha^{-1/2}(\bm{\xi}^{\sigma}_{hi}\bm{n}- \bm{\xi}^{\sigma}_{hb}\bm n)\|^2_{0,\partial\mathcal{T}_h}\nonumber\\
&&\quad\quad= \|\alpha^{-1/2}(\bm{Q}^i_k\bm{\sigma}\bm{n}-\bm{Q}^b_{\color{black}k+1\color{black}}(\bm{\sigma}\bm{n}))-\alpha^{-1/2}(\bm{\sigma}_{hi}\bm{n}-\bm{\sigma}_{hb}\bm n)\|^2_{0,\partial\mathcal{T}_h}
\nonumber\\
&&\quad\quad= \|\alpha^{-1/2}(\bm{Q}^i_k\bm{\sigma}\bm{n}-\bm{Q}^b_{\color{black}k+1\color{black}}(\bm{\sigma}\bm{n}))-\alpha^{1/2}(\bm{u}_{hi}-\bm{u}_{hb})\|^2_{0,\partial\mathcal{T}_h}
\nonumber\\
&&\quad\quad= \|\alpha^{-1/2}(\bm{Q}^i_k\bm{\sigma}\bm{n}-\bm{Q}^b_{\color{black}k+1\color{black}}(\bm{\sigma}\bm{n}))-\alpha^{1/2}(\bm{u}_{hi}-\bm{u}_{hb})\|^2_{0,\partial\mathcal{T}_h}
\nonumber\\
&&\quad\quad\le \|\alpha^{1/2}\left((\bm{Q}^i_{k+1}\bm{u}-\bm{u}_{hi})-(\bm{\mathcal{I}}_{k+1}\bm{u}-\bm{u}_{hb})\right)\|^2_{0,\partial\mathcal{T}_h}\nonumber\\
&&\quad\quad\quad
+\|\alpha^{-1/2}(\bm{Q}^i_k\bm{\sigma}\bm{n}-\bm{Q}^b_{\color{black}k+1\color{black}}(\bm{\sigma}\bm{n}))\|^2_{0,\partial\mathcal{T}_h}+\|\alpha^{1/2}(\bm{Q}^i_{k+1}\bm{u}-\bm{\mathcal{I}}_{k+1}\bm{u})\|^2_{0,\partial\mathcal{T}_h}\nonumber\\
&&\quad\quad\lesssim s_h(\bm{\xi}^u_h,\bm{\xi}^u_h)+h^{2k+2}(\mu^{-1}|\bm{\sigma}|^2_{k+1}+\mu|\bm{u}|^2_{k+2}).\label{in461}
\end{eqnarray}
Thus, from Lemma \ref{lemma45} and   Theorem \ref{th34} it follows
\begin{eqnarray*}
\|\bm{\xi}^{\sigma}_{hi}\|_0^2&\lesssim\left\{\begin{array}{l}
\mu a_h(\bm{\xi}^{\sigma}_{hi},\bm{\xi}^{\sigma}_{hi})
+\mu\|\alpha^{-1/2}(\bm{\xi}^{\sigma}_{hi}\bm{n}-\bm{\xi}^{\sigma}_{hb}\bm{n})\|^2_{0,\partial\mathcal{T}_h},
+\mu\|\beta^{1/2} (tr(\bm{\xi}^{\sigma}_{hi})-\xi^{tr}_{hb}\|^2_{\partial\mathcal{T}_h/\partial\Omega},\\
\hskip7.5cm\  \text{if } k+1<d,\\
\mu a_h(\bm{\xi}^{\sigma}_{hi},\bm{\xi}^{\sigma}_{hi})
+\mu\|\alpha^{-1/2}(\bm{\xi}^{\sigma}_{hi}\bm{n}-\bm{\xi}^{\sigma}_{hb}\bm{n})\|^2_{0,\partial\mathcal{T}_h}, \qquad \text{if } k+1\geq d,
\end{array}
\right.
\end{eqnarray*}
which, together with   (\ref{in461}), Lemma \ref{lemma44}   and the approximation property of $\bm{Q}^i_{k}$, yields the desired estimate \eqref{434}.

The thing left is to  show  (\ref{in47g}). From  the relation \eqref{e11-21} 
we have, for all $\bm{\tau}_{hi}\in \bm{\Sigma}_{hi} $,
\begin{eqnarray*}
a_h(\bm{\xi}_{hi}^{\sigma},\bm{\tau}_{hi})
+{z}_h(\bm{\xi}^{\sigma}_{hi},\xi^{tr}_{hb};\bm{\tau}_{hi},0)
-b_h(\bm{\tau}_{hi},\bm{\xi}^u_h)= E_1(\bm{\sigma},\bm{u};\bm{\tau}_{hi},0)\ \text{if } k+1<d,
 \end{eqnarray*}
which, together with \eqref{e12-22},
  (\ref{pwg3}),  Theorem \ref{th31},   Theorem \ref{th34}, Lemmas \ref{lemma43}-\ref{lemma44}, and the inverse inequality, indicates
\begin{eqnarray*}
\|\bm{\epsilon}_h(\bm{\xi}^u_{hi})\|_{0} &\lesssim &\|\bm{\epsilon}_{w,k}(\bm{\xi}^u_h)\|_0+ \mu^{-1/2}\|\alpha^{1/2}(\bm{\xi}^{u}_{hi}-\bm{\xi}^{u}_{hb})\|_{0,\partial\mathcal{T}_h}  \nonumber\\
&\lesssim&  \sup_{\bm{\tau}_{hi}\in \bm{\Sigma}_{hi}}\frac{b_h(\bm{\tau}_{hi},\bm{\xi}^u_{h})}{\|\bm{\tau}_{hi}\|_0}
+\mu^{-1/2}s_h(\bm{\xi}^u_h,\bm{\xi}^u_h)^{1/2}\\
&\lesssim&  h^{k+1}(\mu^{-1}|\bm{\sigma}|_{k+1}+|\bm{u}|_{k+2}).
\end{eqnarray*}
As a result,  the desired estimate (\ref{in47g}) follows from  Lemma \ref{lemmaKorn2}, Lemma \ref{lemma44} and the triangle inequality.
\end{proof}

\section{$L^2$ error estimation for displacement approximation} \label{s5}

In order to derive the $L^2$ error estimation  for the displacement approximation $\bm{u}_{hi}$, we shall perform  Aubin-Nitsche duality argument based on the following auxiliary  problem:

Find  $\bm{\Psi}:\Omega\rightarrow  \mathbb{R}^{d\times d}_{sym}$,  $\bm{\Phi}:\Omega\rightarrow  \mathbb{R}^{d}$ such that
\begin{eqnarray}
\left\{
   \begin{aligned}
 \mathcal{A}\bm{\Psi}-\bm{\epsilon}(\bm{\Phi})&=\bm{0} &\text{ in }&\Omega, \label{o2}\\
 \nabla\cdot\bm{\Psi}&=\bm{\xi}^u_{hi} &\text{ in }&\Omega, \\
 \bm{\Phi}&=\bm{0}&\text{ on }&\Gamma_D,  \\
 \bm{\Psi}\bm{n}&=\bm{0}&\text{ on }&\Gamma_N.
  \end{aligned}
\right.
 \end{eqnarray}
Here   $\bm{\xi}_{hi}^u$ is the same as in \eqref{xi-sigma-u},  i.e. $\bm{\xi}_{hi}^u=\bm{Q}^i_{k+1}\bm{u}-\bm{u}_{hi}$.
In addition, we assume  the following regularity estimate  holds:
\begin{eqnarray}
|\bm{\Psi}|_1+\mu|\bm{\Phi}|_2\lesssim \|\bm{\xi}^u_{hi}\|_0.\label{rg}
 \end{eqnarray}

\begin{lemma}\label{lemma51}    For  $(\bm{\sigma},\bm{u})\in(\bm\Sigma\bigcap [H^{k+1}(\Omega)]^{d\times d})\times (\bm U\bigcap [H^{k+2}(\Omega)]^{d})$, it holds
\begin{eqnarray}
 |E_0(\bm{u};\bm{Q}^i_k\bm{\Psi})|&\lesssim&
 h^{k+2} |\bm{u}|_{k+2}|\bm{\Psi}|_1,\label{E0-est1}\\
 | E_0(\bm{\Phi};\bm{\xi}_{hi}^{\sigma})|&\lesssim&h^{k+2}(|\bm{\sigma}|_{k+1}+\mu|\bm{u}|_{k+2}) |\bm{\Phi}|_2   ,\label{E0-est2}\\
  |E_1(\bm{\sigma},\bm{u};\bm{Q}^i_k\bm{\Psi},\mathcal{I}_1tr(\bm{\Psi}))|&\lesssim&
 h^{k+2}(|\bm{\sigma}|_{k+1}+\mu|\bm{u}|_{k+2})\mu^{-1}|\bm{\Psi}|_1,\label{E1-est1}\\
 | E_1(\bm{\Psi},\bm{\Phi};\bm{\xi}_{hi}^{\sigma},\xi^{tr}_{hb})|&\lesssim&h^{k+2}(|\bm{\sigma}|_{k+1}+\mu|\bm{u}|_{k+2})(\mu^{-1}|\bm{\Psi}|_1+|\bm{\Phi}|_2)   ,\label{E1-est2}\\
|E_2(\bm{\sigma},\bm{u};\{\bm{Q}^i_{k+1}\bm{\Phi},\bm{\mathcal{I}}_{k+1}\bm{\Phi}\})|&\lesssim&
h^{k+2}(|\bm{\sigma}|_{k+1}+\mu|\bm{u}|_{k+2})|\bm{\Phi}|_2,\label{E1E2}\\
|E_2(\bm{\Psi},\bm{\Phi};\bm{\xi}^u_{h}))|&\lesssim&
h^{k+2}(|\bm{\sigma}|_{k+1}+\mu|\bm{u}|_{k+2})(\mu^{-1}|\bm{\Psi}|_1+|\bm{\Phi}|_2),\label{E3E4}\nonumber\\
 \end{eqnarray}
 where $ E_0(), E_1()$ and $E_2()$ are defined in \eqref{E-form}-\eqref{E-form-E2}, and
 $\bm{\xi}_{hi}^{\sigma}, \ \xi^{tr}_{hb}$ and $\bm{\xi}^u_{h}=\{\bm{\xi}_{hi}^u,\bm{\xi}_{hb}^u\}
 $
 are the same as in \eqref{xi-sigma-u}.
\end{lemma}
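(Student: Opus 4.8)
The plan is to prove all six bounds by a single unified device. Each of $E_0$, $E_1$, $E_2$ (see \eqref{E-form}--\eqref{E-form-E2}) is a sum of face integrals over $\partial\mathcal T_h$ or $\partial\mathcal T_h/\partial\Omega$ of the form $\langle\delta_1,\delta_2\bm n\rangle$ or $\langle\beta\,\delta_1,\delta_2\rangle$, in which one factor is the interpolation error $\bm{\mathcal{I}}_{k+1}\bm w-\bm w$ of a \emph{globally continuous} field ($\bm w=\bm u$ or $\bm w=\bm\Phi$) and the other lives in a discrete slot. In the a~priori analysis (Lemma~\ref{lemma43}) only Cauchy--Schwarz together with Lemma~\ref{lemma21} was used, which yields order $h^{k+1}$; here one further power of $h$ is needed. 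The gain is available because, in each of the six cases, the discrete slot is occupied either by the projection $\bm Q^i_k$ of the \emph{smooth} dual stress $\bm\Psi$ (in \eqref{E0-est1}, \eqref{E1-est1}), by $\{\bm Q^i_{k+1}\bm\Phi,\bm{\mathcal{I}}_{k+1}\bm\Phi\}$ built from the smooth dual displacement (in \eqref{E1E2}), or by the discretization errors $\bm\xi^\sigma_{hi}$, $\xi^{tr}_{hb}$, $\bm\xi^u_h$, which are already controlled at order $h^{k+1}$ by Theorem~\ref{th48} and Lemma~\ref{lemma44}.

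The key lemma I would isolate first is the following splitting. For a smooth tensor field $\bm\chi\in[H^1(\Omega)]^{d\times d}$, write $\bm Q^i_k\bm\chi=(\bm Q^i_k\bm\chi-\bm\chi)+\bm\chi$. Paired against $\bm{\mathcal{I}}_{k+1}\bm w-\bm w$ over $\partial\mathcal T_h$, the first summand gives $\sum_T\langle\bm{\mathcal{I}}_{k+1}\bm w-\bm w,(\bm Q^i_k\bm\chi-\bm\chi)\bm n\rangle_{\partial T}$, which by Cauchy--Schwarz, \eqref{2.8} (with the highest admissible smoothness order, so $h_T^{k+3/2}$ resp.\ $h_T^{3/2}$) and \eqref{pr11} (with $m=1$, so $h_T^{1/2}$) is of order $h^{k+2}$. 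For the second summand, since $\bm{\mathcal{I}}_{k+1}\bm w-\bm w\in H^1(\Omega)$ is single valued across interior faces and $\bm\chi\in H^1(\Omega)$, the sum $\sum_T\langle\bm{\mathcal{I}}_{k+1}\bm w-\bm w,\bm\chi\bm n_T\rangle_{\partial T}$ telescopes to $\langle\bm{\mathcal{I}}_{k+1}\bm w-\bm w,\bm\chi\bm n\rangle_{\partial\Omega}$: on $\Gamma_N$ the pieces vanish when $\bm\chi=\bm\Psi$ because $\bm\Psi\bm n|_{\Gamma_N}=\bm 0$, and on $\Gamma_D$ they vanish when $\bm w=\bm\Phi$ because $\bm\Phi\in H^1_D(\Omega)$ and \eqref{2.6} gives $\bm{\mathcal{I}}_{k+1}\bm\Phi|_{\Gamma_D}=\bm 0$. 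When instead the discrete slot is a discretization error $\bm\xi^\sigma_{hi}$, the interior-face contributions become jumps $\langle\bm{\mathcal{I}}_{k+1}\bm w-\bm w,[\bm\xi^\sigma_{hi}\bm n]\rangle_E$; here I would use $[\bm\xi^\sigma_{hi}\bm n]=[(\bm Q^i_k\bm\sigma-\bm\sigma)\bm n]-[\bm\sigma_{hi}\bm n]$ together with \eqref{defhb}, \eqref{hb} and \eqref{AS2}, which show that the part of $\bm\sigma_{hb}\bm n$ tested against a continuous trace coincides with $\bm Q^b_{k+1}(\bm\sigma\bm n)$ (so only its projection error, of order $h^{k+1/2}$, survives), leaving only $\bm\sigma_{hb}\bm n-\bm\sigma_{hi}\bm n=\alpha(\bm u_{hb}-\bm u_{hi})$, which is controlled by $s_h(\bm\xi^u_h,\bm\xi^u_h)^{1/2}$ plus interpolation terms.

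Granting this device, the six estimates follow mechanically. For \eqref{E0-est1}, \eqref{E0-est2} apply it directly to $E_0$. For \eqref{E1-est1}, \eqref{E1-est2} add the $\beta$-term of $E_1$: split $tr(\bm Q^i_k\bm\sigma)-\mathcal{I}_1 tr(\bm\sigma)$ into $tr(\bm Q^i_k\bm\sigma)-tr(\bm\sigma)$ (order $h^{k+1/2}$ in $L^2(\partial T)$ by \eqref{pr11}) and $tr(\bm\sigma)-\mathcal{I}_1 tr(\bm\sigma)$ (order $h^{1/2}$, or $h^{3/2}$ when the second slot is $\mathcal{I}_1 tr(\bm\Psi)$), use $\beta|_E=(2\mu)^{-1}h_E$, and pair against $\|\beta^{1/2}(tr(\bm\tau_{hi})-\tau^{tr}_{hb})\|_{\partial\mathcal T_h/\partial\Omega}$, which is bounded through $z_h$ by Lemma~\ref{lemma44}. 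For \eqref{E1E2}, \eqref{E3E4} the $\alpha$-term of $E_2$ uses $\alpha|_E=2\mu h_E^{-1}$ and \eqref{2.7}--\eqref{2.8} to write $\|\alpha^{1/2}(\bm Q^i_{k+1}\bm u-\bm{\mathcal{I}}_{k+1}\bm u)\|_{0,\partial T}\lesssim\mu^{1/2}h_T^{k+1}|\bm u|_{k+2}$, while the $\bm\sigma\bm n-\bm Q^i_k\bm\sigma\bm n$ term uses \eqref{pr11}; the arguments $\{\bm Q^i_{k+1}\bm\Phi,\bm{\mathcal{I}}_{k+1}\bm\Phi\}$ and $\bm\xi^u_h$ supply the matching second factor (order $h^{k+1}$, or $\mu^{1/2}h$ for the dual one). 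Throughout, the $\mu$- and $\lambda$-dependence is tracked only through $\alpha$, $\beta$ and \eqref{D1}--\eqref{D4}, so that the bounds come out with exactly the stated $\mu$-powers and uniformly in $\lambda$.

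The main obstacle is the boundary term $\langle\bm{\mathcal{I}}_{k+1}\bm u-\bm u,\bm\Psi\bm n\rangle_{\Gamma_D}$ arising in \eqref{E0-est1} and \eqref{E1-est1} — there $\bm{\mathcal{I}}_{k+1}\bm u$ does not reproduce $\bm g_D$, so the telescoping above does not close — together with the related bulk term $(\bm\Psi,\nabla_h(\bm{\mathcal{I}}_{k+1}\bm u-\bm u))$: neither can be bounded by the crude product $\|\bm\Psi\|_0\|\cdot\|$, since that would introduce $\|\bm\Psi\|_0$ and only half the needed power of $h$. These I would treat by exploiting the dual relation $\nabla\cdot\bm\Psi=\bm\xi^u_{hi}$ (so $\|\nabla\cdot\bm\Psi\|_0\lesssim|\bm\Psi|_1$), the $L^2$-orthogonality of the element-wise projections on the simplicial refinement $\mathcal T_h^*$ against $\mathbb{P}_k(\mathcal T_h^*)$, global integration by parts, and a negative-norm estimate $\|\bm{\mathcal{I}}_{k+1}\bm u-\bm u\|_{-1/2,\Gamma_D}\lesssim h^{k+2}|\bm u|_{k+2}$ (a consequence of \eqref{2.8} and a scaled trace argument), combined with the zero-mean property \eqref{2.5} of $\bm{\mathcal{I}}_{k+1}\bm u-\bm u$ on $\Gamma_D$. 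Everything else is Cauchy--Schwarz plus the inverse and trace inequalities of Lemma~\ref{lemma21} and the approximation results of Lemmas~\ref{lemma23} and \ref{lemma2.6}.
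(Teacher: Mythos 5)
Your overall strategy coincides with the paper's: for each face term you insert a face-wise approximation of the smooth dual field ($\bm\Psi$ or $\bm\Phi$) to gain the extra power of $h$, you use the single-valuedness of $\bm{\mathcal{I}}_{k+1}\bm w-\bm w$ and the homogeneous dual boundary conditions to kill the inserted part, and for the slots occupied by $\bm{\xi}^{\sigma}_{hi}$, $\xi^{tr}_{hb}$, $\bm{\xi}^u_h$ you invoke the $O(h^{k+1})$ bounds of Lemma \ref{lemma44} together with trace/inverse inequalities. That is exactly the paper's proof of \eqref{E1-est1}--\eqref{E3E4} (your jump-manipulation via \eqref{defhb}, \eqref{hb}, \eqref{AS2} for the $\bm{\xi}^{\sigma}_{hi}$ slot is an unnecessary detour: since $\bm{\mathcal{I}}_{k+1}\bm\Phi-\bm\Phi$ already contributes $h^{3/2}|\bm\Phi|_{2}$ per face, a direct Cauchy--Schwarz with $\|\bm{\xi}^{\sigma}_{hi}\bm n\|_{0,\partial T}\lesssim h_T^{-1/2}\|\bm{\xi}^{\sigma}_{hi}\|_{0,T}$ and Theorem \ref{th48} already gives $h^{k+2}$, and this is what the paper does).

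The genuine gap is precisely at the point you flag as the main obstacle, the $\Gamma_D$ contribution in \eqref{E0-est1} and \eqref{E1-est1}. Your proposed resolution does not close. First, the negative-norm estimate $\|\bm{\mathcal{I}}_{k+1}\bm u-\bm u\|_{-1/2,\Gamma_D}\lesssim h^{k+2}|\bm u|_{k+2}$ cannot be derived from \eqref{2.8} plus the \emph{global} mean property \eqref{2.5}: gaining the extra $h^{1/2}$ over the $L^2(\Gamma_D)$ bound requires orthogonality of $\bm{\mathcal{I}}_{k+1}\bm u-\bm u$ to constants \emph{on each boundary face (or patch)}, which is built into the construction in Appendix \ref{A1} (steps (4a)--(4c)) but is not the content of \eqref{2.5}. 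Second, even granting that estimate, pairing against $\bm\Psi\bm n$ in $H^{1/2}(\Gamma_D)$ (or passing through the bulk term $(\bm\Psi,\nabla_h(\bm{\mathcal{I}}_{k+1}\bm u-\bm u))$ after global integration by parts) forces the full norm $\|\bm\Psi\|_1$, whereas the lemma and the regularity assumption \eqref{rg} only control the seminorm $|\bm\Psi|_1$; there is no orthogonality available for $\nabla_h(\bm{\mathcal{I}}_{k+1}\bm u-\bm u)$ that rescues this. The paper's fix is simpler and avoids both issues: replace $\bm\Psi\bm n$ by $\bm Q^b_0\bm\Psi\bm n$ in the inserted term, so that $\langle\bm{\mathcal{I}}_{k+1}\bm u-\bm u,\bm Q^b_0\bm\Psi\bm n\rangle_{\Gamma_D}=0$ \emph{exactly} by the local mean preservation, $\bm Q^b_0\bm\Psi\bm n|_{\Gamma_N}=\bm 0$, and the interior faces cancel; the surviving factor $\bm Q^i_k\bm\Psi\bm n-\bm Q^b_0\bm\Psi\bm n$ is then $O(h_T^{1/2}|\bm\Psi|_{1,T})$ on each $\partial T$, which involves only the seminorm and yields \eqref{E0-est1} and \eqref{E1-est1} as stated.
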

\begin{proof} Since the proofs of \eqref{E0-est1} and \eqref{E0-est2} follow   from those of \eqref{E1-est1} and \eqref{E1-est2}, respectively, we only prove \eqref{E1-est1}-\eqref{E3E4}.

From  $\langle \bm{\mathcal{I}}_{k+1}\bm{u}-\bm{u},\bm{Q}^b_0\bm{\Psi}\bm{n}\rangle_{\Gamma_D}=0$, $\bm{Q}^b_0\bm{\Psi}\bm{n}|_{\Gamma_N}=\bm{\Psi}\bm{n}|_{\Gamma_N}=\bm{0}$,  and Lemma  \ref{lemma2.6},   it follows
\begin{eqnarray*}\label{E_1}
 &&|E_1(\bm{\sigma},\bm{u};\bm{Q}^i_k\bm{\Psi},\mathcal{I}_1tr(\bm{\Psi}))|\le|\langle \bm{\mathcal{I}}_{k+1}\bm{u}-\bm{u},\bm{Q}^i_k\bm{\Psi}\bm{n}-\bm{Q}^b_0\bm{\Psi}\bm{n}\rangle_{\partial \mathcal{T}_h}|\nonumber\\
&&\quad\quad\quad+|\langle\beta(tr(\bm{Q}_k^i\bm{\sigma})-\mathcal{I}_1tr(\bm{\sigma})), tr(\bm{Q}^i_k\bm{\Psi})-\mathcal{I}_1tr(\bm{\Psi}) \rangle_{\partial\mathcal{T}_h/\partial\Omega}|\nonumber\\
&&\quad\quad\lesssim  h^{k+2}(|\bm{\sigma}|_{k+1}+\mu|\bm{u}|_{k+2})\mu^{-1}|\bm{\Psi}|_1,
\end{eqnarray*}
i.e.  \eqref{E1-est1} holds.
Similarly, we have
\begin{eqnarray*}\label{E_2}
 &&|E_2(\bm{\sigma},\bm{u};\{\bm{Q}^i_{k+1}\bm{\Phi},\bm{\mathcal{I}}_{k+1}\bm{\Phi}\})|
 \le|\langle\alpha(\bm{Q}^i_{k+1}\bm{u}-\bm{\mathcal{I}}_{k+1}\bm{u}),
 \bm{Q}^i_{k+1}\bm{\Phi}-\bm{\mathcal{I}}_{k+1}\bm{\Phi}\rangle_{\partial\mathcal{T}_h}
                                 |\nonumber\\
&&\quad\quad\quad+|\langle\bm{\sigma}\bm{n}- \bm{Q}^i_k\bm{\sigma}\bm{n},\bm{Q}^i_{k+1}\bm{\Phi}-\bm{\mathcal{I}}_{k+1}\bm{\Phi}\rangle_{\partial\mathcal{T}_h}|\nonumber\\
&&\quad\quad\lesssim  h^{k+2}(|\bm{\sigma}|_{k+1}+\mu|\bm{u}|_{k+2})|\bm{\Phi}|_2,
\end{eqnarray*}
i.e. \eqref{E1E2} holds.
In light of  Lemma  \ref{lemma44} and  the approximation properties of $\bm{Q}^i_k$, $\mathcal{I}_1$ and  $\bm{\mathcal{I}}_{k+1}$, we get
\begin{eqnarray}\label{E_3}
 &&| E_1(\bm{\Psi},\bm{\Phi};\bm{\xi}_{hi}^{\sigma},\xi^{tr}_{hb})|
 \le|\langle \bm{\mathcal{I}}_{k+1}\bm{\Phi}-\bm{\Phi},\bm{\xi}_{hi}^{\sigma}\bm{n}\rangle_{\partial \mathcal{T}_h}
                                 |\nonumber\\
&&\quad\quad\quad+|\langle\beta(tr(\bm{Q}_k^i\bm{\Psi})-\mathcal{I}_1tr(\bm{\Psi})), tr(\bm{\xi}^{\sigma}_{hi})-\xi^{tr}_{hb} \rangle_{\partial\mathcal{T}_h/\partial\Omega}|\nonumber\\
&&\quad\quad\lesssim  h^{k+2}(|\bm{\sigma}|_{k+1}+\mu|\bm{u}|_{k+2})(\mu^{-1}|\bm{\Psi}|_1+|\bm{\Phi}|_2)
\end{eqnarray}
and
\begin{eqnarray}\label{E_4}
 &&|E_2(\bm{\Psi},\bm{\Phi};\bm{\xi}^u_{h}))|
 \le|\langle\alpha(\bm{Q}^i_{k+1}\bm{\Phi}-\bm{\mathcal{I}}_{k+1}\bm{\Phi}),
 \bm{\xi}^{u}_{hi}-\bm{\xi}^{u}_{hb}\rangle_{\partial\mathcal{T}_h}
                                 |\nonumber\\
&&\quad\quad\quad+|\langle\bm{\Psi}\bm{n}- \bm{\Psi}^i_k\bm{\sigma}\bm{n},\bm{\xi}^{u}_{hi}-\bm{\xi}^{u}_{hb}\rangle_{\partial\mathcal{T}_h}|\nonumber\\
&&\quad\quad\lesssim  h^{k+2}(|\bm{\sigma}|_{k+1}+\mu|\bm{\Phi}|_{k+2})|\bm{\Phi}|_2,
\end{eqnarray}
i.e.   \eqref{E1-est2} and \eqref{E3E4} hold.

\end{proof}
%

We are now ready to show the $L^2$-error estimation for the displacement approximation $\bm{u}_{hi}$.

\begin{theorem}  \label{th52} Let  $(\bm{\sigma},\bm{u})\in(\bm\Sigma\bigcap [H^{k+1}(\Omega)]^{d\times d})\times (\bm U\bigcap [H^{k+2}(\Omega)]^{d})$  be the solution to the model $(\ref{o1})$, and let
$\left(\bm{\sigma}_{hi}, \sigma_{hb}^{tr}, \bm{u}_h:=\{\bm{u}_{hi},\bm{u}_{hb}\}\right)\in  \bm{\Sigma}_{hi}\times \Sigma_{hb}^{tr}\times \bm{U}_{h}^{\bm{g}_D}$ and $\left(\bm{\sigma}_{hi},  \bm{u}_h\right)\in  \bm{\Sigma}_{hi}\times \bm{U}_{h}^{\bm{g}_D}$ be the solutions to the
 WG schemes \eqref{wg1new}-\eqref{wg2new} and \eqref{wg1}-\eqref{wg2}, respectively. Then,
under  the regularity assumption (\ref{rg}), it holds
\begin{eqnarray} \label{L2-est}
\|\bm{u}-\bm{u}_{hi}\|_0\lesssim h^{k+2}(\mu^{-1}|\bm{\sigma}|_{k+1}+|\bm{u}|_{k+2} ).
 \end{eqnarray}
\end{theorem}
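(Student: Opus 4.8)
The plan is to carry out an Aubin--Nitsche duality argument based on the auxiliary problem \eqref{o2}. Since $\bm{\xi}^u_{hi}=\bm{Q}^i_{k+1}\bm{u}-\bm{u}_{hi}$ is piecewise polynomial it lies in $[L^2(\Omega)]^d$, so \eqref{o2} is a linear elasticity problem with body force $\bm{\xi}^u_{hi}$ and homogeneous boundary data; under \eqref{rg} its solution $(\bm{\Psi},\bm{\Phi})$ is symmetric with $\bm{\Psi}\in[H^1(\Omega)]^{d\times d}\cap\bm{H}(div,\Omega)$ and $\bm{\Phi}\in[H^2(\Omega)]^d$. First I would note that the error equations of Lemma \ref{lemma42} apply to $(\bm{\Psi},\bm{\Phi})$ with the data $\bm{f},\bm{g}_D,\bm{g}_N$ replaced by $\bm{\xi}^u_{hi},\bm{0},\bm{0}$, the relevant projected dual quantities being $\bm{Q}^i_k\bm{\Psi}\in\bm{\Sigma}_{hi}$ (symmetric, since $\bm{\Psi}$ is), $\mathcal{I}_1 tr(\bm{\Psi})\in\Sigma^{tr}_{hb}$ (needed only when $k+1<d$), and $\{\bm{Q}^i_{k+1}\bm{\Phi},\bm{\mathcal{I}}_{k+1}\bm{\Phi}\}\in\bm{U}_{hi}\times\bm{U}^{\bm{0}}_{hb}$, the last inclusion holding because $\bm{\Phi}|_{\Gamma_D}=\bm{0}$ together with \eqref{2.6}.

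Using $\nabla\cdot\bm{\Psi}=\bm{\xi}^u_{hi}$, I would next apply \eqref{e120} for $(\bm{\Psi},\bm{\Phi})$ with the test function $\bm{v}_h=\bm{\xi}^u_h=\{\bm{Q}^i_{k+1}\bm{u}-\bm{u}_{hi},\bm{\mathcal{I}}_{k+1}\bm{u}-\bm{u}_{hb}\}$, which belongs to $\bm{U}^{\bm{0}}_h$ since $\bm{\mathcal{I}}_{k+1}\bm{u}|_{\Gamma_D}=\bm{\mathcal{I}}_{k+1}\bm{g}_D=\bm{u}_{hb}|_{\Gamma_D}$; this gives
\begin{align*}
\|\bm{\xi}^u_{hi}\|_0^2=-b_h(\bm{Q}^i_k\bm{\Psi},\bm{\xi}^u_h)-s_h(\{\bm{Q}^i_{k+1}\bm{\Phi},\bm{\mathcal{I}}_{k+1}\bm{\Phi}\},\bm{\xi}^u_h)-E_2(\bm{\Psi},\bm{\Phi};\bm{\xi}^u_h).
\end{align*}
Then I would eliminate $b_h(\bm{Q}^i_k\bm{\Psi},\bm{\xi}^u_h)$ via the primal error equation \eqref{e11-21} (resp. \eqref{e12-22}) with $(\bm{\tau}_{hi},\tau^{tr}_{hb})=(\bm{Q}^i_k\bm{\Psi},\mathcal{I}_1 tr(\bm{\Psi}))$, eliminate $s_h(\{\bm{Q}^i_{k+1}\bm{\Phi},\bm{\mathcal{I}}_{k+1}\bm{\Phi}\},\bm{\xi}^u_h)$ via the symmetry of $s_h$ and \eqref{e12} with $\bm{v}_h=\{\bm{Q}^i_{k+1}\bm{\Phi},\bm{\mathcal{I}}_{k+1}\bm{\Phi}\}$, and eliminate the remaining term $b_h(\bm{\xi}^\sigma_{hi},\{\bm{Q}^i_{k+1}\bm{\Phi},\bm{\mathcal{I}}_{k+1}\bm{\Phi}\})$ via \eqref{e1100} (resp. \eqref{e1101}) for $(\bm{\Psi},\bm{\Phi})$ with $(\bm{\tau}_{hi},\tau^{tr}_{hb})=(\bm{\xi}^\sigma_{hi},\xi^{tr}_{hb})$. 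Because $a_h(\cdot,\cdot)$ and $z_h(\cdot,\cdot;\cdot,\cdot)$ are symmetric, every volume bilinear-form contribution then appears twice with opposite sign and cancels, leaving, when $k+1<d$,
\begin{align*}
\|\bm{\xi}^u_{hi}\|_0^2&=E_1(\bm{\sigma},\bm{u};\bm{Q}^i_k\bm{\Psi},\mathcal{I}_1 tr(\bm{\Psi}))-E_1(\bm{\Psi},\bm{\Phi};\bm{\xi}^\sigma_{hi},\xi^{tr}_{hb})\\
&\quad+E_2(\bm{\sigma},\bm{u};\{\bm{Q}^i_{k+1}\bm{\Phi},\bm{\mathcal{I}}_{k+1}\bm{\Phi}\})-E_2(\bm{\Psi},\bm{\Phi};\bm{\xi}^u_h),
\end{align*}
and the same identity with $E_1$ replaced by $E_0$ (and the $\tau^{tr}_{hb}$-slots dropped) when $k+1\ge d$.

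To conclude, I would bound each of the four residuals by Lemma \ref{lemma51}; each is $\lesssim h^{k+2}(|\bm{\sigma}|_{k+1}+\mu|\bm{u}|_{k+2})(\mu^{-1}|\bm{\Psi}|_1+|\bm{\Phi}|_2)$, where for the two residuals carrying $\bm{\xi}^\sigma_{hi},\xi^{tr}_{hb},\bm{\xi}^u_h$ the bound already incorporates Lemma \ref{lemma44}. Invoking \eqref{rg} gives $\mu^{-1}|\bm{\Psi}|_1+|\bm{\Phi}|_2=\mu^{-1}(|\bm{\Psi}|_1+\mu|\bm{\Phi}|_2)\lesssim\mu^{-1}\|\bm{\xi}^u_{hi}\|_0$; cancelling one power of $\|\bm{\xi}^u_{hi}\|_0$ yields $\|\bm{\xi}^u_{hi}\|_0\lesssim h^{k+2}(\mu^{-1}|\bm{\sigma}|_{k+1}+|\bm{u}|_{k+2})$, and \eqref{L2-est} follows from $\|\bm{u}-\bm{u}_{hi}\|_0\le\|\bm{u}-\bm{Q}^i_{k+1}\bm{u}\|_0+\|\bm{\xi}^u_{hi}\|_0$ together with $\|\bm{u}-\bm{Q}^i_{k+1}\bm{u}\|_0\lesssim h^{k+2}|\bm{u}|_{k+2}$ from Lemma \ref{lemma2.6}.

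The main obstacle is the algebra of the second step: one must pick exactly the right combination of discrete primal errors and projected dual quantities as test functions so that, after substituting both the primal and the dual error equations, all $a_h$- and $z_h$-terms appear in cancelling pairs and $\|\bm{\xi}^u_{hi}\|_0^2$ is expressed purely through the consistency residuals $E_0,E_1,E_2$, which carry the extra factor of $h$. Checking that Lemma \ref{lemma42} transfers to the auxiliary problem \eqref{o2} and that $\{\bm{Q}^i_{k+1}\bm{\Phi},\bm{\mathcal{I}}_{k+1}\bm{\Phi}\}$ and $\bm{\xi}^u_h$ are admissible test functions is routine, and once the displayed identity is in hand the estimate is just Lemma \ref{lemma51} followed by \eqref{rg}.
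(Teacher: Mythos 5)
Your proposal is correct and follows essentially the same route as the paper's proof: the same duality argument with the auxiliary problem \eqref{o2}, the same choice of projected dual quantities $\left(\bm{Q}^i_k\bm{\Psi},\mathcal{I}_1 tr(\bm{\Psi}),\{\bm{Q}^i_{k+1}\bm{\Phi},\bm{\mathcal{I}}_{k+1}\bm{\Phi}\}\right)$ and discrete errors as test functions in the primal and dual error equations, leading to the identical four-residual identity, which is then bounded by Lemma \ref{lemma51} and the regularity assumption \eqref{rg}. The only difference is the order in which the intermediate substitutions are performed, which does not affect the result.
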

\begin{proof} We only prove \eqref{L2-est} for   $k+1<d$, since the case of $k+1\ge d$ follows similarly.  Similar to the proof of  Lemma \ref{lemma42},   from \eqref{o2} we can easily obtain, for all $\bm{\tau}_{hi}\in \bm{\Sigma}_{hi} $,  $\tau^{tr}_{hb}\in\Sigma_{hb}^{tr}$, $ \bm{v}_h=\{\bm{v}_{hi},\bm{v}_{hb}\}\in  \bm{U}_{h}^{\bm{0}}$,
\begin{eqnarray}
a_h(\bm{Q}^i_k\bm{\Psi},\bm{\tau}_{hi})
+{z}_h(\bm{Q}^i_k\bm{\Psi},\mathcal{I}_1tr(\bm{\Psi});\bm{\tau}_{hi},\tau^{tr}_{hb})
-b_h(\bm{\tau}_{hi},\{\bm{Q}^i_{k+1}\bm{\Phi},\bm{\mathcal{I}}_{k+1}\bm{\Phi}\})\nonumber\\
=E_1(\bm{\Psi},\bm{\Phi};\bm{\tau}_{hi},\tau^{tr}_{hb}),\label{e21}
 \end{eqnarray}
\begin{eqnarray}\label{e22}
-b_h(\bm{Q}^i_k\bm{\Psi},\bm{v}_{h})-s_h(\{\bm{Q}^i_{k+1}\bm{\Phi},\bm{\mathcal{I}}_{k+1}\bm{\Phi}\},\bm{v}_h)=(\bm{\xi}^u_{hi},\bm{v}_{hi})+E_2(\bm{\Psi},\bm{\Phi};\bm{v}_{h}).
 \end{eqnarray}

Taking $\bm{v}_h=\bm{\xi}^u_h$ in (\ref{e22}), by \eqref{e11-21}, (\ref{e21})  and \eqref{e12} we have
\begin{eqnarray*}
\|\bm{\xi}^u_{hi}\|_0^2&=& -b_h(\bm{Q}^i_k\bm{\Psi},\bm{\xi}^u_{h})-s_h(\{\bm{Q}^i_{k+1}\bm{\Phi},\bm{\mathcal{I}}_{k+1}\bm{\Phi}\},\bm{\xi}^u_{h})
-E_2(\bm{\Psi},\bm{\Phi};\bm{\xi}^u_{h})\\
&=& -a_h(\bm{\xi}^{\sigma}_{hi},\bm{Q}^i_k\bm{\Psi})
-s_h(\{\bm{Q}^i_{k+1}\bm{\Phi},\bm{\mathcal{I}}_{k+1}\bm{\Phi}\},\bm{\xi}^u_{h})\nonumber\\
&&-{z}_h(\bm{\xi}^{\sigma}_{hi},\xi^{tr}_{hb};\bm{Q}^i_k\bm{\Psi},\mathcal{I}_1tr(\bm{\Psi}))\nonumber\\
&&+E_1(\bm{\sigma},\bm{u};\bm{Q}_k^i\bm{\Psi},\mathcal{I}_1tr(\bm{\Psi}))
-E_2(\bm{\Psi},\bm{\Phi};\bm{\xi}^u_{h})\\
&=&-b_h(\bm{\xi}^{\sigma}_{hi},\{\bm{Q}^i_{k+1}\bm{\Phi},\bm{\mathcal{I}}_{k+1}\bm{\Phi}\})\nonumber\\
&&-s_h(\{\bm{Q}^i_{k+1}\bm{\Phi},\bm{\mathcal{I}}_{k+1}\bm{\Phi}\},\bm{\xi}^u_{h})\nonumber\\
&&-E_1(\bm{\Psi},\bm{\Phi};\bm{\xi}_{hi}^{\sigma},\xi^{tr}_{hb}))  \nonumber\\
&&+E_1(\bm{\sigma},\bm{u};\bm{Q}_k^i\bm{\Psi},\mathcal{I}_1tr(\bm{\Psi}))
-E_2(\bm{\Psi},\bm{\Phi};\bm{\xi}^u_{h})\\
&=&E_2(\bm{\sigma},\bm{u};\{\bm{Q}^i_{k+1}\bm{\Phi},\bm{\mathcal{I}}_{k+1}\bm{\Phi}\})
-E_1(\bm{\Psi},\bm{\Phi};\bm{\xi}_{hi}^{\sigma},\xi^{tr}_{hb})) \nonumber\\
&&+E_1(\bm{\sigma},\bm{u};\bm{Q}_k^i\bm{\Psi},\mathcal{I}_1tr(\bm{\Psi}))
-E_2(\bm{\Psi},\bm{\Phi};\bm{\xi}^u_{h}),
\end{eqnarray*}
which, together with  Lemma \ref{lemma51} and the regularity (\ref{rg}), yields
 \begin{eqnarray*}
\|\bm{\xi}^u_{hi}\|_0^2&\lesssim&h^{k+2}(\mu^{-1}|\bm{\sigma}|_{k+1}+|\bm{u}|_{k+2})(|\bm{\Psi}|_1+\mu|\bm{\Phi}|_2) \nonumber\\
&\lesssim&h^{k+2}(\mu^{-1}|\bm{\sigma}|_{k+1}+|\bm{u}|_{k+2})\|\bm{\xi}^u_{hi}\|_0.
\end{eqnarray*}
As a result, the desired estimate follows from  the triangle inequality and  the approximation property of $\bm{Q}^i_{k+1}$.
\end{proof}

\section{Numerical examples}

In this section, we provide several numerical examples   to verify  our theoretical results. All tests  are programmed in C++ using the Eigen \cite{Eigen} library.\\

\subsection{A 2D example}

Let  $\Omega=(0,1)\times(0,1)$. We consider  the homogeneous Dirichlet boundary condition, and the exact solution $(\bm u, \bm\sigma)$ is of the following form:
\begin{eqnarray*}
\bm{u}=
\left(
   \begin{aligned}
\sin 2\pi y(-1+\cos 2\pi x)+\frac{1}{1+\lambda}\sin\pi x\sin\pi y\\
\sin 2\pi x(1-\cos 2\pi y)+\frac{1}{1+\lambda}\sin\pi x\sin\pi y
  \end{aligned}
\right),\\
\bm{\sigma}=
\left(
   \begin{aligned}
 2\mu\frac{d{u}_1}{dx}+\lambda (\frac{d{u}_1}{dx}+\frac{d{u}_2}{dy}),& &\mu(\frac{d{u}_1}{dy}+\frac{d{u}_2}{dx})\\
 \mu(\frac{d{u}_1}{dy}+\frac{d{u}_2}{dx}),& &2\mu\frac{du_2}{dy}+\lambda (\frac{d{u}_1}{dx}+\frac{d{u}_2}{dy}
  \end{aligned}
\right),
 \end{eqnarray*}
where  $\mu=1$ and $ \lambda=1,10^3,10^6$. Two types of meshes  are used (cf. Figures \ref{figure6}-\ref{figure7}). 
\begin{figure}[H]
\centering
\includegraphics[height=5.97cm ,width=12cm,angle=0]{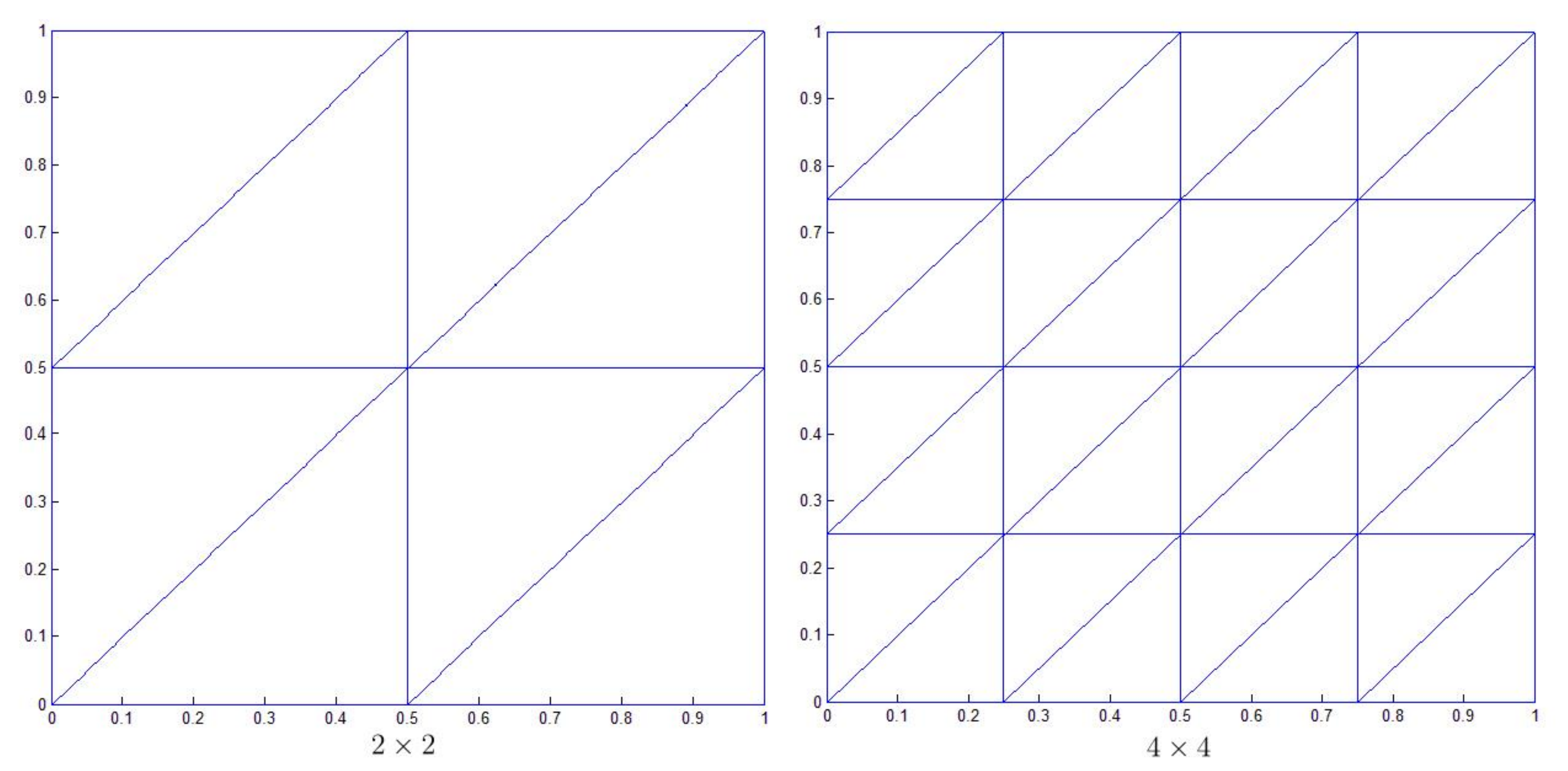}
\caption{\label{figure6} Triangle meshes}
\end{figure}

\begin{figure}[H]
\centering
\includegraphics[height=5.97cm ,width=12cm,angle=0]{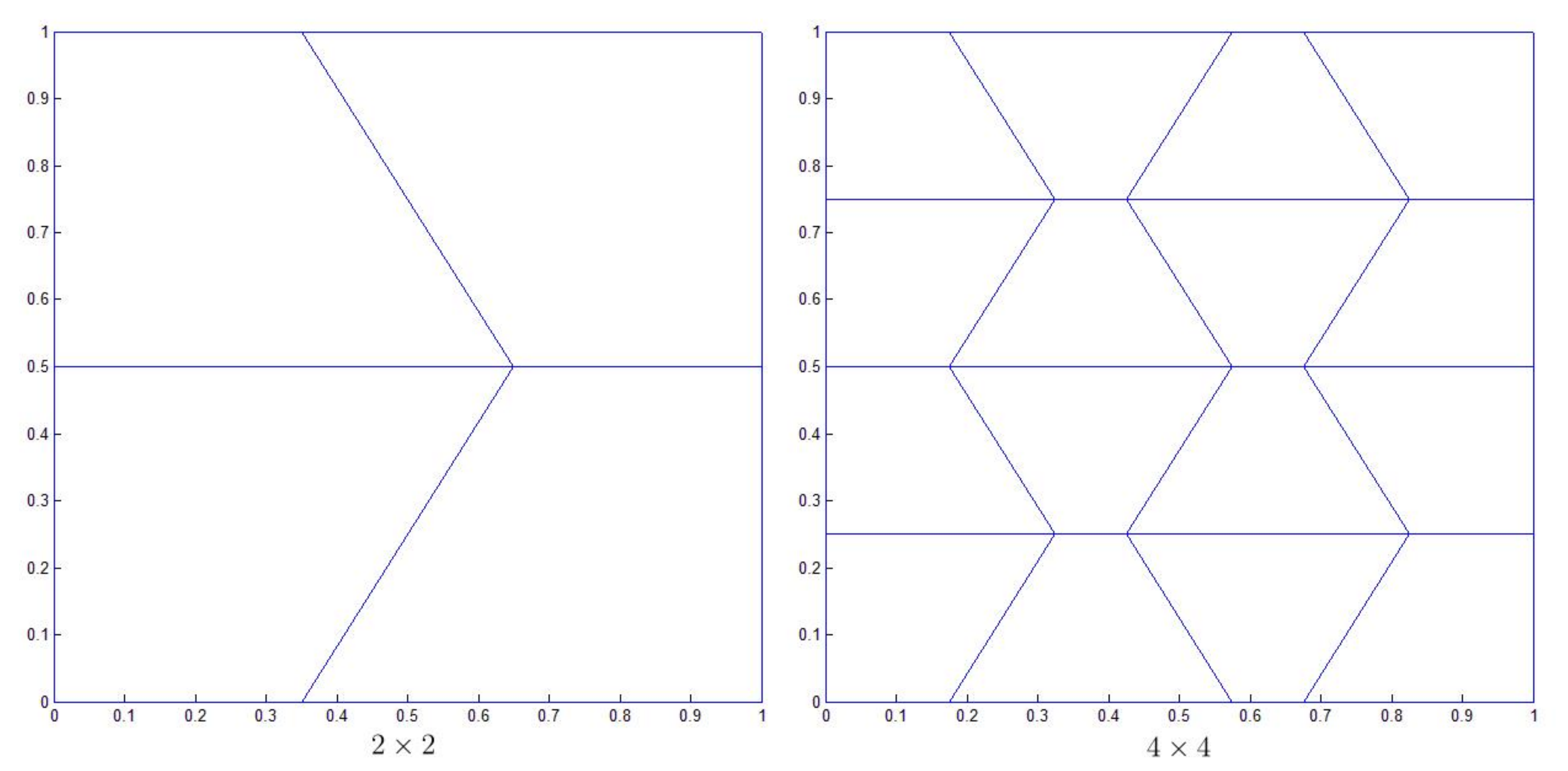}
\caption{\label{figure7} Ladder-shaped meshes}
\end{figure}

Numerical results  of the displacement and stress approximations are listed in Tables \ref{tab1}-\ref{tab2} for the proposed new WG methods \eqref{wg1new}-\eqref{wg2new} and \eqref{wg1}-\eqref{wg2} with $k=0,1,2$.  We can see that the   methods   yield optimal convergence rates that are uniformly with respect to the Lam\'e constant $\lambda$,  as   is conformable to the theoretical results.

For comparison we also list in Table \ref{tab1}  some results computed by the WG scheme \eqref{wg1*}-\eqref{wg2*}, denoted by WG*, with $k=0$  (cf. Remark \ref{rem2.2}). We note that for a fixed $k$, the new method is of  fewer degrees of freedom (DOF) than the corresponding WG* (after the local elimination).  We refer to Table \ref{tab1}(c) for the numbers of DOF for the two methods with $k=0$.


\begin{table}[H]
\small
\caption{Numerical results for $k=0$ on triangle meshes for a 2D example} \label{tab1}
\centering
\subtable[Displacement error $ {\|\bm{u}-\bm{u}_{hi}\|_0}/{\|\bm{u}\|_0}$]{
          \begin{tabular}{c|c|c|c|c|c|c|c}
\Xhline{1pt}

%

%
\multirow{2}{*}{Method}& 	&\multicolumn{2}{c|}{$\lambda=10^0$}	&\multicolumn{2}{c|}{$\lambda=10^3$}	
                &\multicolumn{2}{c}{$\lambda=10^6$}	\\
                \cline{2-8}
 &Mesh &Error &Rate
                &Error &Rate
                &Error &Rate
                \\
                     \cline{1-8}
\multirow{8}{*}{ {new WG}}&$2^2\times2^2$&2.5118E-01	& 	    &2.9130E-01	&	    &2.9137E-01	&\\
&$2^3\times2^3$&1.1458E-01	&1.13 	&1.5561E-01	&0.90 	&1.5572E-01	&0.90 \\
&$2^4\times2^4$&4.1279E-02	&1.47 	&6.1521E-02	&1.34 	&6.1579E-02	&1.34 	\\
&$2^5\times2^5$&1.2346E-02	&1.74 	&1.9154E-02	&1.68 	&1.9172E-02	&1.68 	\\
&$2^6\times2^6$&3.3351E-03	&1.89 	&5.2839E-03	&1.86 	&5.2892E-03	&1.86 	\\
&$2^7\times2^7$&8.6194E-04	&1.95 	&1.3782E-03	&1.94 	&1.3796E-03	&1.94 	\\
&$2^8\times2^8$&2.1870E-04	&1.98 	&3.5089E-04	&1.97 	&3.5126E-04	&1.97 	\\
&$2^9\times2^9$&5.5053E-05	&1.99 	&8.8436E-05	&1.99 	&8.8529E-05	&1.99 	\\

   \cline{1-8}
\multirow{8}{*}{  WG*}
&$2^2\times2^2$	&4.6433E-01	&	    &4.4558E-01	&	    &4.4551E-01 &\\
&$2^3\times2^3$	&1.3023E-01	&1.83 	&1.3145E-01	&1.76 	&1.3145E-01 &1.76\\
&$2^4\times2^4$	&3.4483E-02	&1.92 	&3.6053E-02	&1.87 	&3.6059E-02 &1.87\\
&$2^5\times2^5$	&8.7824E-03	&1.97 	&9.3198E-03	&1.95 	&9.3220E-03 &1.95\\
&$2^6\times2^6$	&2.2064E-03	&1.99 	&2.3522E-03	&1.99 	&2.3528E-03 &1.99\\
&$2^7\times2^7$	&5.5226E-04	&2.00 	&5.8949E-04	&2.00 	&5.8961E-04 &2.00\\
&$2^8\times2^8$	&1.3811E-04	&2.00 	&1.4746E-04	&2.00 	&1.4736E-04 &2.00\\

\Xhline{1pt}

\end{tabular}
}
\subtable[Stress error $ {\|\bm{\sigma}-\bm{\sigma}_{hi}\|_0}/{\|\bm{\sigma}\|_0}$]{
           \begin{tabular}{c|c|c|c|c|c|c|c}
\Xhline{1pt}

%

%
\multirow{2}{*}{Method}&	&\multicolumn{2}{c|}{$\lambda=10^0$}	&\multicolumn{2}{c|}{$\lambda=10^3$}	
                &\multicolumn{2}{c}{$\lambda=10^6$}	\\
                \cline{2-8}
 &Mesh &Error &Rate
                &Error &Rate
                &Error &Rate
                \\
                     \cline{1-8}
\multirow{8}{*}{ new WG}
&$2^2\times2^2$&5.8248E-01	&	    &5.9898E-01	&	    &5.9901E-01	&	    \\
&$2^3\times2^3$&3.2138E-01	&0.86 	&3.3902E-01	&0.82 	&3.3907E-01	&0.82 	\\
&$2^4\times2^4$&1.6203E-01	&0.99 	&1.7155E-01	&0.98 	&1.7157E-01	&0.98 	\\
&$2^5\times2^5$&7.9664E-02	&1.02 	&8.3487E-02	&1.04 	&8.3501E-02	&1.04 	\\
&$2^6\times2^6$&3.9321E-02	&1.02 	&4.0530E-02	&1.04 	&4.0535E-02	&1.04 	\\
&$2^7\times2^7$&1.9536E-02	&1.01 	&1.9902E-02	&1.03 	&1.9904E-02	&1.03 	\\
&$2^8\times2^8$&9.7405E-03	&1.00 	&9.8665E-03	&1.01 	&9.8668E-03	&1.01 	\\
&$2^9\times2^9$&4.8642E-03	&1.00 	&4.9152E-03	&1.01 	&4.9153E-03	&1.01 	\\

   \cline{1-8}
\multirow{8}{*}{WG*}

&$2^2\times2^2$	&5.4821E-01	&	    &5.9035E-01	&	    &5.9046E-01	&\\
&$2^3\times2^3$	&2.7265E-01	&1.01 	&2.9041E-01	&1.02 	&2.9046E-01	&1.02\\
&$2^4\times2^4$	&1.3062E-01	&1.06 	&1.3699E-01	&1.08 	&1.3700E-01	&1.08\\
&$2^5\times2^5$	&6.3998E-02	&1.03 	&6.6276E-02	&1.05 	&6.6280E-02	&1.05\\
&$2^6\times2^6$	&3.1804E-02	&1.01 	&3.2774E-02	&1.02 	&3.2776E-02	&1.02\\
&$2^7\times2^7$	&1.5877E-02	&1.00 	&1.6337E-02	&1.00 	&1.6338E-02	&1.00\\
&$2^8\times2^8$	&7.9351E-03	&1.00 	&8.1618E-03	&1.00 	&8.1622E-03	&1.00\\

\Xhline{1pt}

\end{tabular}
}

\subtable[Numbers of DOF  of different methods (after   local elimination)]{
           \begin{tabular}{c|c|c|c|c|c|c|c}
\Xhline{1pt}

%

%
Method
&$2^2\times 2^2$	
&$2^3\times 2^3$	
&$2^4\times 2^4$	
&$2^5\times 2^5$	
&$2^6\times 2^6$	
&$2^7\times 2^7$	
&$2^8\times 2^8$              \\

 \cline{1-8}

 {new WG}	    &75	&243	&867	&3267	&12675	&49923	&198147\\
 {WG*}	&224&832	&3200	&12544	&49664	&197632	&788480\\


\Xhline{1pt}

\end{tabular}
}

\end{table}

\begin{table}[H]
\small
\caption{Numerical results for new WG methods with $k=1$ on triangular meshes and $k=2$ on Ladder-shaped meshes: a 2D example} \label{tab2}
\centering
\subtable[Displacement error $ {\|\bm{u}-\bm{u}_{hi}\|_0}/{\|\bm{u}\|_0}$]{
          \begin{tabular}{c|c|c|c|c|c|c|c}
\Xhline{1pt}

%

%
\multirow{2}{*}{Method}& 	&\multicolumn{2}{c|}{$\lambda=10^0$}	&\multicolumn{2}{c|}{$\lambda=10^3$}	
                &\multicolumn{2}{c}{$\lambda=10^6$}	\\
                \cline{2-8}
 &Mesh &Error &Rate
                &Error &Rate
                &Error &Rate
                \\
                     \cline{1-8}
\multirow{6}{2cm}{  $k=1$: triangular meshes}
&$2^2\times2^2$	&5.2634E-02	&	    &6.2573E-02	&	    &6.2624E-02 &\\
&$2^3\times2^3$	&7.2373E-03	&2.86 	&8.7100E-03	&2.84 	&8.7173E-03 &2.84\\
&$2^4\times2^4$	&9.1526E-04	&2.98 	&1.1972E-03	&2.86 	&1.1987E-03 &2.86\\
&$2^5\times2^5$	&1.1056E-04	&3.05 	&1.6295E-04	&2.88 	&1.6328E-04 &2.88\\
&$2^6\times2^6$	&1.3369E-05	&3.05 	&2.1207E-05	&2.94 	&2.1263E-05 &2.94\\
&$2^7\times2^7$	&1.6481E-06	&3.02 	&2.6839E-06	&2.98 	&2.6930E-06 &2.98\\

                     \cline{1-8}
\multirow{5}{2cm}{  $k=2$: Ladder-shaped meshes}
&$2^2\times2^2$	&2.4966E-02	&	    &2.5643E-02	&	    &2.5643E-02	&\\
&$2^3\times2^3$	&1.7134E-03	&3.87 	&1.7813E-03	&3.85 	&1.7813E-03	&3.85\\
&$2^4\times2^4$	&1.1007E-04	&3.96 	&1.1508E-04	&3.95 	&1.1508E-04	&3.95\\
&$2^5\times2^5$	&6.9419E-06	&3.99 	&7.2747E-06	&3.98 	&7.2752E-06	&3.98\\
&$2^6\times2^6$	&4.3538E-07	&3.99 	&4.5666E-07	&3.99 	&4.6030E-07	&3.98\\


\Xhline{1pt}

\end{tabular}
}
\subtable[Stress error $ {\|\bm{\sigma}-\bm{\sigma}_{hi}\|_0}/{\|\bm{\sigma}\|_0}$]{
           \begin{tabular}{c|c|c|c|c|c|c|c}
\Xhline{1pt}

%

%
\multirow{2}{*}{Method}&	&\multicolumn{2}{c|}{$\lambda=10^0$}	&\multicolumn{2}{c|}{$\lambda=10^3$}	
                &\multicolumn{2}{c}{$\lambda=10^6$}	\\
                \cline{2-8}
 &Mesh &Error &Rate
                &Error &Rate
                &Error &Rate
                \\
                     \cline{1-8}
\multirow{6}{2cm}{  $k=1$: triangular meshes}
&$2^2\times2^2$	&1.5162E-01	&	    &2.1825E-01	&	    &2.1863E-01	&\\
&$2^3\times2^3$	&4.1901E-02	&1.86 	&6.4509E-02	&1.76 	&6.4643E-02	&1.76\\
&$2^4\times2^4$	&1.2563E-02	&1.74 	&2.2094E-02	&1.55 	&2.2151E-02	&1.55\\
&$2^5\times2^5$	&3.7221E-03	&1.75 	&7.6549E-03	&1.53 	&7.6789E-03	&1.53\\
&$2^6\times2^6$	&1.0091E-03	&1.88 	&2.3014E-03	&1.73 	&2.3098E-03	&1.73\\
&$2^7\times2^7$	&2.5899E-04	&1.96 	&6.1511E-04	&1.90 	&6.1749E-04	&1.90\\

                     \cline{1-8}
\multirow{5}{2cm}{  $k=2$: Ladder-shaped meshes}
&$2^2\times2^2$	&4.5366E-02	&	    &4.7317E-02	&	    &4.7322E-02	&\\
&$2^3\times2^3$	&5.8426E-03	&2.96 	&6.1579E-03	&2.94 	&6.1587E-03	&2.94\\
&$2^4\times2^4$	&7.3045E-04	&3.00 	&7.7307E-04	&2.99 	&7.7318E-04	&2.99\\
&$2^5\times2^5$	&9.1015E-05	&3.00 	&9.6541E-05	&3.00 	&9.6555E-05	&3.00\\
&$2^6\times2^6$	&1.1348E-05	&3.00 	&1.2046E-05	&3.00 	&1.2062E-05	&3.00\\

\Xhline{1pt}

\end{tabular}
}

\end{table}

\subsection{A 3D example}

Let $\Omega=(0,1)\times(0,1)\times (0,1)$ be subdivided into  simplicial meshes (cf. Figure \ref{figure8}).  We consider the homogeneous Dirichlet
boundary condition, and the exact solution $(\bm u, \bm\sigma)$ is of the following form:
\begin{align*}
u_1=&200(x-x^2)^2(2y^3-3y^2+y)(2z^3+3z^2+z),\\
u_2=&-100(y-y^2)^2(2x^3-3x^2+x)(2z^3-3z^2+z),\\
u_3=&-100(z-z^2)^2(2y^3-3y^2+y)(2x^3-3x^2+x),\\
\sigma_{11}=&400\mu(2x^3-3x^2+x)^2(2y^3-3y^2+y)(2z^3+3z^2+z),\\
\sigma_{22}=&-200\mu(2x^3-3x^2+x)^2(2y^3-3y^2+y)(2z^3+3z^2+z),\\
\sigma_{33}=&-200\mu(2x^3-3x^2+x)^2(2y^3-3y^2+y)(2z^3+3z^2+z),\\
\sigma_{12}=&\sigma_{21}=2\mu( \frac{d u_1}{d y}+\frac{d u_2}{dx} ),\\
\sigma_{13}=&\sigma_{31}=2\mu( \frac{d u_1}{d z}+\frac{d u_3}{dx} ),\\
\sigma_{23}=&\sigma_{32}=2\mu( \frac{d u_2}{d z}+\frac{d u_3}{dy} ),
 \end{align*}
where $\mu=0.5$ and $\lambda=10^0,10^3,10^6$.

 Numerical results  of the new WG methods with $k=0,1$ are  listed in Table \ref{tab3}. We can see that the methods yield uniformly optimal convergence rates, as      is conformable to the theoretical results.
\begin{figure}[H]
\centering
\includegraphics[height=5.97cm ,width=12cm,angle=0]{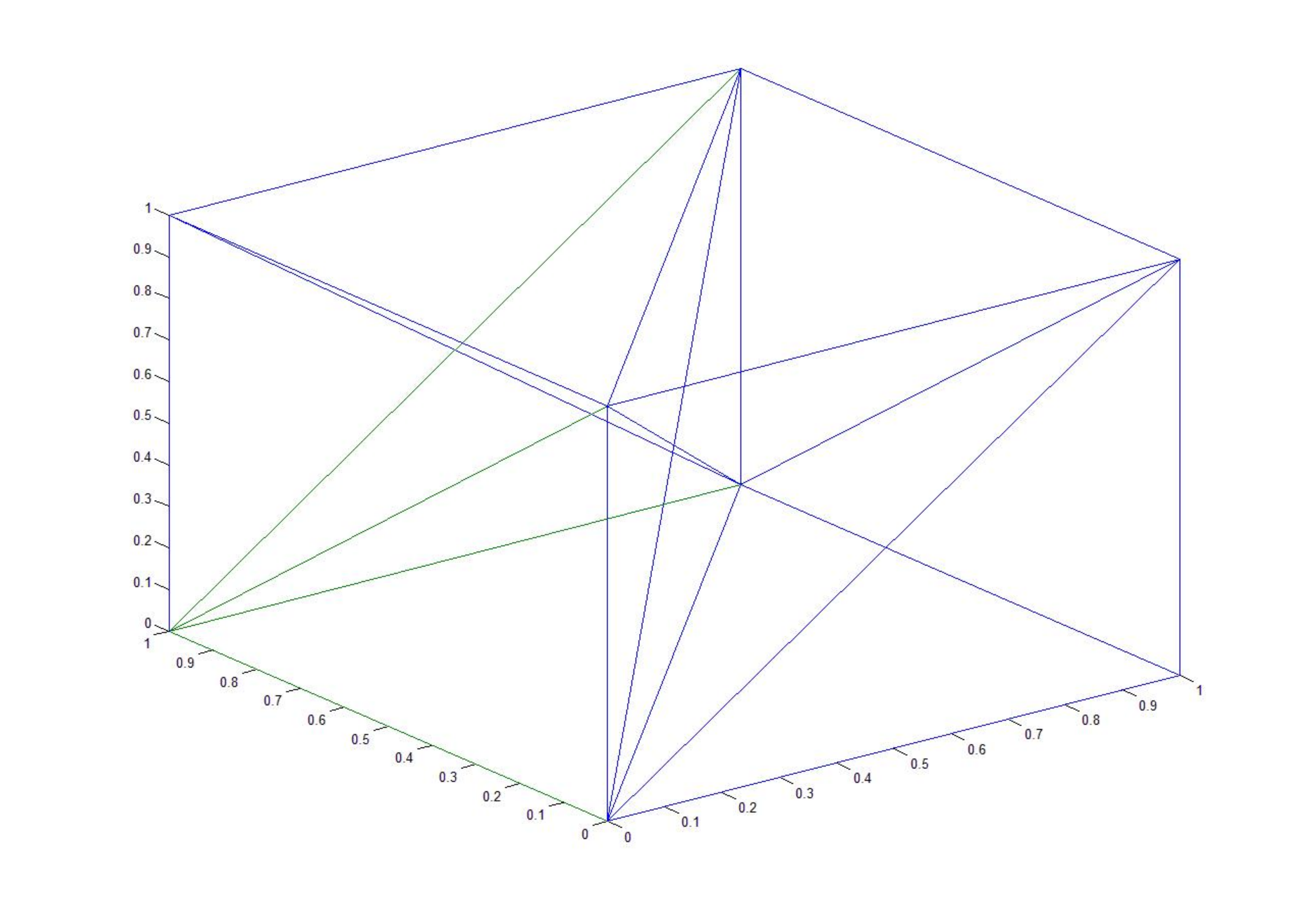}
\caption{\label{figure8} 3D simplicial mesh: $1\times1\times1$ (six elements)}
\end{figure}

\begin{table}[H]
\small
\caption{Numerical results for $k=0$ on simplicial meshes for a 3D example} \label{tab1-3}
\centering
 \begin{tabular}{c|c|c|c|c|c|c|c}
\Xhline{1pt}

%

%
\multirow{2}{*}{Method}& 	&\multicolumn{2}{c|}{$\lambda=10^0$}	&\multicolumn{2}{c|}{$\lambda=10^3$}	
                &\multicolumn{2}{c}{$\lambda=10^6$}	\\
                \cline{2-8}
 &Mesh &Error &Rate
                &Error &Rate
                &Error &Rate
                \\
                     \cline{1-8}
\multirow{8}{*}{\bf{$\frac{\|\bm{u}-\bm{u}_{hi}\|_0}{\|\bm{u}\|_0}$}}
&$4\times4\times4$	    &5.2896E-01	&	    &5.2877E-01	&	    &5.2877E-01	&\\
&$8\times8\times8$	    &2.3132E-01	&1.19 	&2.3085E-01	&1.20 	&2.3085E-01	&1.20\\
&$16\times16\times16$	&7.0628E-02	&1.71 	&7.0193E-02	&1.72 	&7.0192E-02	&1.72\\
&$32\times32\times32$	&1.8673E-02	&1.92 	&1.8463E-02	&1.93 	&1.8463E-02	&1.93\\
&$36\times36\times36$	&1.4813E-02	&1.97 	&1.4640E-02	&1.97 	&1.4640E-02	&1.97\\
&$40\times40\times40$	&1.2033E-02	&1.97 	&1.1891E-02	&1.97 	&1.1890E-02	&1.97\\
&$44\times44\times44$	&9.9662E-03	&1.98 	&9.8478E-03	&1.98 	&9.8476E-03	&1.98\\
&$48\times48\times48$	&8.3882E-03	&1.98 	&8.2891E-03	&1.98 	&8.2889E-03	&1.98\\

		                     \cline{1-8}
\multirow{8}{*}{$\frac{\|\bm{\sigma}-\bm{\sigma}_{hi}\|_0}{\|\bm{\sigma}\|_0}$}
&$4\times4\times4$	    &8.1612E-01	&	    &8.1625E-01	&	    &8.1625E-01	&\\
&$8\times8\times8$	    &5.1316E-01	&0.67 	&5.1347E-01	&0.67 	&5.1347E-01	&0.67\\
&$16\times16\times16$	&2.7874E-01	&0.88 	&2.7906E-01	&0.88 	&2.7906E-01	&0.88\\
&$32\times32\times32$	&1.4275E-01	&0.97 	&1.4296E-01	&0.97 	&1.4296E-01	&0.97\\
&$36\times36\times36$	&1.2710E-01	&0.99 	&1.2728E-01	&0.99 	&1.2728E-01	&0.99\\
&$40\times40\times40$	&1.1453E-01	&0.99 	&1.1468E-01	&0.99 	&1.1469E-01	&0.99\\
&$44\times44\times44$	&1.0421E-01	&0.99 	&1.0434E-01	&0.99 	&1.0434E-01	&0.99\\
&$48\times48\times48$	&9.5592E-02	&0.99 	&9.5705E-02	&0.99 	&9.5706E-02	&0.99\\


\Xhline{1pt}

\end{tabular}

\end{table}

\begin{table}[H]
\small
\caption{Numerical results for $k=1$ on simplicial meshes for a 3D example} \label{tab3}
\centering
          \begin{tabular}{c|c|c|c|c|c|c|c}
\Xhline{1pt}

%

%
\multirow{2}{*}{Method}& 	&\multicolumn{2}{c|}{$\lambda=10^0$}	&\multicolumn{2}{c|}{$\lambda=10^3$}	
                &\multicolumn{2}{c}{$\lambda=10^6$}	\\
                \cline{2-8}
 &Mesh &Error &Rate
                &Error &Rate
                &Error &Rate

                \\
                     \cline{1-8}
\multirow{5}{*}{$\frac{\|\bm{u}-\bm{u}_{hi}\|_0}{\|\bm{u}\|_0}$}
&$4\times4\times4$	    &8.6049E-02	&	    &8.7240E-02	&	    &8.7245E-02	&\\
&$8\times8\times8$	    &1.1340E-02	&2.92 	&1.1532E-02	&2.92 	&1.1533E-02	&2.92\\
&$12\times12\times12$	&3.2599E-03	&3.07 	&3.2960E-03	&3.09 	&3.2962E-03	&3.09\\
&$16\times16\times16$	&1.3332E-03	&3.11 	&1.3432E-03	&3.12 	&1.3432E-03	&3.12\\
&$20\times20\times20$	&6.6479E-04	&3.12 	&6.6836E-04	&3.13 	&6.6837E-04	&3.13\\

                     \cline{1-8}
\multirow{5}{*}{$\frac{\|\bm{\sigma}-\bm{\sigma}_{hi}\|_0}{\|\bm{\sigma}\|_0}$}
&$4\times4\times4$	    &2.7648E-01	&	    &2.8978E-01	&	    &2.8984E-01	&\\
&$8\times8\times8$	    &8.0878E-02	&1.77 	&8.2214E-02	&1.82 	&8.2220E-02	&1.82\\
&$12\times12\times12$	&3.8123E-02	&1.85 	&3.8374E-02	&1.88 	&3.8375E-02	&1.88\\
&$16\times16\times16$	&2.2266E-02	&1.87 	&2.2339E-02	&1.88 	&2.2340E-02	&1.88\\
&$20\times20\times20$	&1.4620E-02	&1.89 	&1.4648E-02	&1.89 	&1.4649E-02	&1.89\\


\Xhline{1pt}

\end{tabular}

\end{table}

\addcontentsline{toc}{section}{References.}
\label{}
\bibliographystyle{siam}






\appendix
\appendixpage
\addcontentsline{toc}{section}{Appendices}\markboth{APPENDICES}{}
\begin{appendices}

\section{\label{A1} Modified Scott-Zhang interpolation for vectors}

Let $\Omega\color{black}\subset\color{black}\mathbb{R}^d \ (d=2,3)$ be a polyhedral region with boundary $\partial\Omega={\Gamma_D\cup\Gamma_N}$, where $meas(\Gamma_D)>0$ and $\Gamma_D\cap\Gamma_N=\emptyset$.

Let $\mathcal{T}_h$ be a shape regular simplicial subdivision of $\Omega$ with maximum mesh size $h$,
such that each (open) boundary edge (face) belongs either to $\Gamma_D$, or to $\Gamma_N$. Furthermore, we assume that there should be at least 2 edges (or faces) on $\Gamma_S$ if $\Gamma_S\neq\emptyset$, where  $S=D,N$.

We shall construct an interpolation operator
$${\mathcal{I}}_{k+1}: W^{l,p}(\Omega)\to W^{l,p}(\Omega) \cap \mathbb{P}_{k+1}(\mathcal{T}_h),$$
where
\begin{eqnarray}
l\ge 1 \text{ if } p=1 \text{ and } l>1/p \text{ otherwise}.
\end{eqnarray}
 To this end, let $\mathcal{N}_h=\{A_i\}^N_{i=1}$ be the set of all
interpolation nodes of $\mathcal{T}_h$ and $\{\bm{\phi}_i\}_{i=1}^N$ be the corresponding nodal basis of
$\mathbb{P}_{k+1}(\mathcal{T}_h)\cap H^1(\Omega)$. And let $\{A_{J_i}\}_{i=0}^{M+1}\subset \mathcal{N}_h$ be the set of vertexes of edge (or face) $E\subset \Gamma_N$, and $\{A_{M_i}\}_{i=1}^{S}\subset \mathcal{N}_h$ be the set of interior nodes of edge (or face) $E\subset \Gamma_N$. If $d=2$ the points $A_{J_0}$ and $A_{J_{M+1}}$ should be the adjoint point of $\Gamma_D$ and $\Gamma_N$, and $A_{J_i}$ $(i=1,2,\cdots,M)$ is between $A_{J_{i-1}}$ and $A_{J_{i+1}}$.

The interpolation operator
${\mathcal{I}}_{k+1}$ is defined as follows:
For any $v\in W^{l,p}(\Omega)$,
given by
\begin{eqnarray}\label{def-interpolation}
 {\mathcal{I}}_{k+1} {v}:=\sum_{A_i\in\mathcal{N}_h} {\mathcal{I}}_{k+1} {v}( {A}_i) {\phi}_i.
\end{eqnarray}
Here for any node $A_i$, the value ${\mathcal{I}}_{k+1} {v}( {A}_i)$ is determined by the following way, i.e.  {\bf (1)-(4)}.

  {\bf(1)} If $A_{i}$ is an interior point of some $d$-simplex $T\in \mathcal{T}_h$, then let $\{A_{i,j}\}_{j=1}^{n_0}$ ($A_{i,1}=A_{i}$, $n_0=C^{k+1}_{k+d+1}$) be the set of nodal points in $T$ and $\{ {\phi}_{i,j}\}_{j=1}^{n_0}$ be the corresponding nodal basis, and let
$\{ {\psi}_{i,j}\}_{j=1}^{n_0}$ be the $L^2(T)$-dual basis  of $\{ {\phi}_{i,j}\}_{j=1}^{n_0}$ satisfying
\begin{eqnarray}
( {\phi}_{i,j}, {\psi}_{i,k})_T=\delta_{jk},
\end{eqnarray}
where $\delta_{jk}$ is the Kronecker delta.  In this case,  we define 
\begin{eqnarray}
 {\mathcal{I}}_{k+1} {v}(A_i):=( {v}, {\psi}_{i,1})_T.
\end{eqnarray}

  {\bf(2)} If $A_{i}$ is an interior point  of edge (or face) ($(d-1)-$simplex) $E\subset \partial T$, then
let $\{A_{i,j}\}_{j=1}^{n_1}$ ($A_{i,1}=A_{i}$, $n_1=C^{k+1}_{k+d}$) be the set of nodal points in $E$ and $\{ {\phi}_{i,j}\}_{j=1}^{n_1}$ be the corresponding nodal basis, and let
$\{ {\psi}_{i,j}\}_{j=1}^{n_1}$ be the $L^2(E)$-dual basis  of $\{ {\phi}_{i,j}\}_{j=1}^{n_1}$ satisfying
\begin{eqnarray}
\langle {\phi}_{i,j}, {\psi}_{i,k}\rangle_E=\delta_{jk}.
\end{eqnarray}
Thus, we define 
\begin{eqnarray}
 {\mathcal{I}}_{k+1} {v}(A_i):=\langle {v}, {\psi}_{i,1}\rangle_E.
\end{eqnarray}

  {\bf (3)} For the rest of  $A_i\in\mathcal{N}_h$, we select a  $(d-1)$-simplex $E$ such that
$A_i\in \overline{E}$, subject only to the restriction
\begin{eqnarray}
E\subset\overline{\Gamma_D} \text{ if } A_i\in \overline{\Gamma_D}.
\end{eqnarray}
Let $\{A_{i,j}\}_{j=1}^{n_1}$ ($A_{i,1}=A_{i}$, $n_1=C^{k+1}_{k+d}$) be the set of nodal points in $\overline{E}$ and $\{ {\phi}_{i,j}\}_{j=1}^{n_1}$ be the corresponding nodal basis, and let
$\{ {\psi}_{i,j}\}_{j=1}^{n_1}$ be the $L^2(E)$-dual basis  of $\{ {\phi}_{i,j}\}_{j=1}^{n_1}$ satisfying
\begin{eqnarray}
\langle {\phi}_{i,j}, {\psi}_{i,k}\rangle_E=\delta_{jk}.
\end{eqnarray}
Then we define 
\begin{eqnarray}
 {\mathcal{I}}_{k+1} {v}(A_i):=\langle {v}, {\psi}_{i,1}\rangle_E.
\end{eqnarray}

  {\bf (4)}   We need to  modify the interpolation conditions of $ {\mathcal{I}}_{k+1} {v}$ at some nodes $A_i$ in $\partial \Omega$ for the  three cases  (4a)-(4c) below. We note that the  (4b)-(4c) are corresponding to the case of $k+1<d$.
\begin{itemize}
\item[(4a)] If $k+1\ge d$, then, for any edge (or face, $d-1$-simplex) $E\subset  \Gamma_S$ ($S=D$ or $N$), there always exists at least one interior node in $E$. We choose one such  node,  $A_i$, and replace the  corresponding interpolation condition of  $ {\mathcal{I}}_{k+1} {v}(A_i)$ in  {\bf (2)} or {\bf (3)}   by
\begin{eqnarray}
\langle {\mathcal{I}}_{k+1} {v}, 1 \rangle_{E}
=\langle {v}, 1 \rangle_{E}.
\end{eqnarray}

\item[ (4b)] If $k+1= d-1$, then, for any open $(d-2)$-simplex $P\in \Gamma_S$ ($S=D$ or $N$), there exists only one node $A_i$ inside $\overline{P}$. Let $\{A_{I_j}\}_{j=1}^M$ denote
  the set of   all nodes inside all the  $(d-2)$-simplexes  in $\Gamma_S$.   For any $A_i\in \{A_{I_j}\}_{j=1}^M$, there exist exactly two $(d-1)$-simplexes $E_{i,1}\subset \Gamma_S $ and $E_{i,2}\subset \Gamma_S$ that are adjoined at $A_i$. Set $w(A_i):=\overline{E_{i,1}}\bigcup \overline{E_{i,2}}$. We choose
a set of nodes $\{A_{J_i}\}_{i=1}^{l}\subset \{A_{I_j}\}_{j=1}^M$ such that
\begin{eqnarray}
\mu_{d-1}(w(A_{J_j})\bigcap w(A_{J_k}))=0,j\neq k,\\
\bigcup_{A_{J_i}\in \{A_{J_i}\}_{i=1}^{l}}w(A_{J_i})=\overline{\Gamma_S}.
\end{eqnarray}
Then the interpolation condition of $ {\mathcal{I}}_{k+1}v$ at each $A_i\in \{A_{J_i}\}_{i=1}^{l}$ is replaced by
\begin{eqnarray}
\langle {\mathcal{I}}_{k+1} {v}, 1 \rangle_{E_{i,1}}
+\langle {\mathcal{I}}_{k+1} {v}, 1 \rangle_{E_{i,2}}
=\langle {v}, 1 \rangle_{E_{i,1}}+\langle {v}, 1 \rangle_{E_{i,2}}.
\end{eqnarray}

\item[ (4c)] If $k+1=1$ and $d=3$, then let $\{A_{I_j}\}_{j=1}^M$   denote the set of all nodes inside $\Gamma_S$ ($S=D $ or $N$). For any $A_i\in \{A_{I_j}\}_{j=1}^M$, there exist   $l_i$ $(d-1)$-simplexes, $\{E_{i,j}\}_{j=1}^{l_i}$, in $\Gamma_S $ that
  adjoined at $A_i$.  Set $w(A_i):=\bigcup_{E_{i,j}\in \{E_{i,j}\}_{j=1}^{l_i}}
\overline{E_{i,j}}$. We choose
a set of nodes $\{A_{J_i}\}_{i=1}^{l}\subset \{A_{I_j}\}_{j=1}^M$ such that
\begin{eqnarray}
\mu_{d-1}(w(A_{J_j})\bigcap w(A_{J_k}))=0,j\neq k,\\
\bigcup_{A_{J_i}\in \{A_{J_i}\}_{i=1}^{l}}w(A_{J_i})=\overline{\Gamma_S}.
\end{eqnarray}
Then  the interpolation condition of $ {\mathcal{I}}_{k+1}v$ at each $A_i\in \{A_{J_i}\}_{i=1}^{l}$ is replaced by
\begin{eqnarray}
\sum_{E_{i,j}\in \{E_{i,j}\}_{j=1}^{l_i} }\langle {\mathcal{I}}_{k+1} {v}, 1\rangle_{E_{i,j}}
=\sum_{E_{i,j}\in \{E_{i,j}\}_{j=1}^{l_i} }\langle {v}, 1 \rangle_{E_{i,j}}.
\end{eqnarray}
\end{itemize}

Based on  interpolation conditions  in  {\bf(1)-(4)},  we know that the interpolation polynomial  $ {\mathcal{I}}_{k+1}v \in W^{l,p}(\Omega) \cap \mathbb{P}_{k+1}(\mathcal{T}_h)$ is uniquely determined.

In view of the definition of the operator $ {\mathcal{I}}_{k+1}$, i.e. \eqref{def-interpolation}, we easily have the following results.
\begin{lemma} For any $ {v}\in W^{l,p}(\Omega)$, there   holds
\begin{eqnarray}
\langle {\mathcal{I}}_{k+1} {v}, 1 \rangle_{\Gamma_S}=\langle {v}, 1 \rangle_{\Gamma_S},\quad S=D,N,\\
 {\mathcal{I}}_{k+1} {v}|_{\Gamma_D}= {0} \text{ if }  {v}|_{\Gamma_D}= {0}.
\end{eqnarray}
In particular,
\begin{eqnarray}\label{invariance}
 {\mathcal{I}}_{k+1} {v}= {v}, \quad \forall {v}\in \mathbb{P}_{k+1}(\mathcal{T}_h)\cap W^{l,p}(\Omega).
\end{eqnarray}

\end{lemma}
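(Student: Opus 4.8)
The plan is to verify the three assertions directly from the definition \eqref{def-interpolation} and the construction rules \textbf{(1)}--\textbf{(4)}, using only the $L^2$-duality $(\phi_{i,j},\psi_{i,k})=\delta_{jk}$, the uniqueness of the interpolant recorded just above the lemma, and the covering properties of the patches $w(A_{J_i})$.

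First I would prove the reproduction property \eqref{invariance}. For $v\in\mathbb{P}_{k+1}(\mathcal{T}_h)\cap W^{l,p}(\Omega)$ the membership in $W^{l,p}(\Omega)$ with $l\ge1$ forces $v$ to be a continuous piecewise polynomial, so $v=\sum_{A_i}v(A_i)\phi_i$ and each trace $v|_T$, $v|_E$ is a genuine polynomial of degree $\le k+1$. It then suffices to check that $v$ itself satisfies every interpolation condition of \textbf{(1)}--\textbf{(4)}, since uniqueness of the interpolant then gives $\mathcal{I}_{k+1}v=v$. For a node treated by \textbf{(1)}--\textbf{(3)}, expanding $v|_T$ (resp.\ $v|_E$) in the local nodal basis and using duality yields $(v,\psi_{i,1})_T=v(A_i)$ (resp.\ $\langle v,\psi_{i,1}\rangle_E=v(A_i)$), which is exactly the assigned condition; for a node modified by \textbf{(4a)}--\textbf{(4c)}, the replaced condition has the form $\langle\,\cdot\,,1\rangle=\langle v,1\rangle$ over one face or a patch of faces, which $v$ satisfies tautologically. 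Hence $v$ fulfills all conditions and \eqref{invariance} follows.

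Next I would establish $\langle\mathcal{I}_{k+1}v,1\rangle_{\Gamma_S}=\langle v,1\rangle_{\Gamma_S}$, $S=D,N$. Since $\phi_i|_{\Gamma_S}=0$ whenever $A_i\notin\overline{\Gamma_S}$, one has $\langle\mathcal{I}_{k+1}v,1\rangle_{\Gamma_S}=\sum_{E\subset\Gamma_S}\langle\mathcal{I}_{k+1}v,1\rangle_E$, and on each boundary simplex $E$ the polynomial $\mathcal{I}_{k+1}v|_E$ is pinned down by the local (possibly modified) conditions on $\overline E$. I would then split into the three regimes of rule \textbf{(4)}: if $k+1\ge d$, rule \textbf{(4a)} imposes $\langle\mathcal{I}_{k+1}v,1\rangle_E=\langle v,1\rangle_E$ at one interior node of every $E\subset\Gamma_S$, and summing over such $E$ gives the claim; if $k+1=d-1$ or $k+1=1$ (with $d=3$), rules \textbf{(4b)}--\textbf{(4c)} impose $\sum_j\langle\mathcal{I}_{k+1}v,1\rangle_{E_{i,j}}=\sum_j\langle v,1\rangle_{E_{i,j}}$ at the chosen nodes $\{A_{J_i}\}$, and because the patches $w(A_{J_i})$ cover $\overline{\Gamma_S}$ with pairwise $\mu_{d-1}$-null overlaps the sum over $i$ again collapses to $\langle\mathcal{I}_{k+1}v,1\rangle_{\Gamma_S}=\langle v,1\rangle_{\Gamma_S}$. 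I expect this bookkeeping---checking that the selected patches enumerate every face of $\Gamma_S$ exactly once, which is exactly what the two defining properties of $\{A_{J_i}\}$ guarantee---to be the most delicate point.

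Finally, for the vanishing property, assume $v|_{\Gamma_D}=0$; then $\mathcal{I}_{k+1}v|_{\Gamma_D}=\sum_{A_i\in\overline{\Gamma_D}}\mathcal{I}_{k+1}v(A_i)\,\phi_i|_{\Gamma_D}$, so it is enough to show $\mathcal{I}_{k+1}v(A_i)=0$ for each $A_i\in\overline{\Gamma_D}$. Every such $A_i$ falls under \textbf{(2)}, \textbf{(3)} or \textbf{(4)}, and---using the restriction in \textbf{(3)} together with the hypothesis that each boundary simplex lies entirely in $\Gamma_D$ or $\Gamma_N$---all the faces $E$ entering its rule can be taken inside $\overline{\Gamma_D}$, so $v|_E=0$. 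For an unmodified node this immediately gives $\mathcal{I}_{k+1}v(A_i)=\langle v,\psi_{i,1}\rangle_E=0$. For a modified node sitting on faces $E\subset\Gamma_D$, the polynomial $\mathcal{I}_{k+1}v|_E$ vanishes at every unmodified node of $\overline E$ and has vanishing mean(s) $\langle\mathcal{I}_{k+1}v,1\rangle_E=\langle v,1\rangle_E=0$; by the unisolvence of the modified degrees of freedom (in cases \textbf{(4b)}--\textbf{(4c)}, equivalently by nonnegativity of the relevant $\mathbb{P}_2$ edge-midpoint, resp.\ $\mathbb{P}_1$ barycentric, basis function on $E$) this forces $\mathcal{I}_{k+1}v|_E\equiv0$, hence $\mathcal{I}_{k+1}v(A_i)=0$. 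Therefore $\mathcal{I}_{k+1}v|_{\Gamma_D}=0$, which would complete the proof.
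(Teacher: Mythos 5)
Your proof is correct and is precisely the direct verification from the construction \textbf{(1)}--\textbf{(4)} that the paper has in mind: the paper in fact omits the argument entirely, asserting the results follow ``in view of the definition'' of ${\mathcal{I}}_{k+1}$, and your write-up supplies exactly those details (duality for the unmodified nodes, the non-overlapping covering of $\overline{\Gamma_S}$ by the patches $w(A_{J_i})$ for the conservation identity, and uniqueness for the reproduction property). The only external ingredient you invoke, unisolvence of the modified degrees of freedom, is asserted by the paper immediately before the lemma, so there is no gap.
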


\begin{lemma}\label{lem-A.2}
 For any $ {v}\in W^{l,p}(\Omega)$ and  $T\in\mathcal{T}_h$, there holds
\begin{eqnarray}
\| {\mathcal{I}}_{k+1} {v}\|_{m,q,T}\lesssim \sum_{r=0}^lh_T^{r-m+d/q-d/p}| {v}|_{r,p,S(T)},
\end{eqnarray}
where
\begin{eqnarray}
S(T)=S^{'}(T):=\text{interior}(\bigcup\{\overline{T^{'}}|\overline{T^{'}}\bigcap
\overline{T}\neq\emptyset, T^{'}\in\mathcal{T}_h \})
\end{eqnarray}
if $k+1\ge d$, and
\begin{eqnarray}
S(T)=\text{interior}(\bigcup\{\overline{T^{'}}|\overline{T^{'}}\bigcap
\overline{S^{'}(T)}\neq\emptyset, T^{'}\in\mathcal{T}_h \})
\end{eqnarray}
if $ k+1< d$.
\end{lemma}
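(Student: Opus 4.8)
The plan is to estimate $\mathcal{I}_{k+1}v$ on $T$ node by node, following the classical Scott--Zhang-type argument. Writing $\mathcal{I}_{k+1}v|_T=\sum_{A_i\in\overline T}\mathcal{I}_{k+1}v(A_i)\,\phi_i|_T$, a sum over a number of interpolation nodes depending only on $k$ and $d$, and using the elementary scaling $\|\phi_i\|_{m,q,T}\lesssim h_T^{d/q-m}$ (equivalence of norms on $\mathbb{P}_{k+1}$ over the bounded-shape family of simplices, together with the usual affine scaling of Sobolev seminorms), the assertion reduces to the single node bound $|\mathcal{I}_{k+1}v(A_i)|\lesssim\sum_{r=0}^{l}h_T^{r-d/p}|v|_{r,p,S(T)}$, to be verified for every node $A_i$ of $\overline T$. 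The neighbourhood $S(T)$ in the statement is chosen precisely so that every dual functional and every averaged condition used to define $\mathcal{I}_{k+1}v$ at a node of $\overline T$ acts on $v$ only through $v|_{S(T)}$, so all the terms below are over $S(T)$.

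For a node interior to a $d$-simplex (case (1), which is then $T$ itself, and $T\subset S(T)$), H\"older's inequality gives $|\mathcal{I}_{k+1}v(A_i)|=|(v,\psi_{i,1})_{T}|\le\|v\|_{0,p,T}\,\|\psi_{i,1}\|_{0,p',T}$ ($p'$ the conjugate exponent); since $\psi_{i,1}$ pulls back to a fixed reference polynomial, $\|\psi_{i,1}\|_{0,p',T}\lesssim h_T^{-d/p}$, which gives the $r=0$ contribution. For a node on a $(d-1)$-simplex $E\subset S(T)$ (cases (2)--(3)), I would apply H\"older on $E$, $|\mathcal{I}_{k+1}v(A_i)|=|\langle v,\psi_{i,1}\rangle_E|\le\|v\|_{0,p,E}\,\|\psi_{i,1}\|_{0,p',E}$ with $\|\psi_{i,1}\|_{0,p',E}\lesssim h_T^{-(d-1)/p}$, and then use the scaled trace embedding $W^{l,p}(T_E)\hookrightarrow L^{p}(E)$ on a simplex $T_E\subset S(T)$ having $E$ as a face --- which holds precisely under the standing hypothesis ($l\ge1$ if $p=1$, and $l>1/p$ otherwise) --- to get $\|v\|_{0,p,E}\lesssim\sum_{r=0}^{l}h_T^{r-1/p}|v|_{r,p,T_E}$; multiplying the two bounds produces the required estimate for these nodes.

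The step I expect to be the main obstacle is the treatment of the boundary nodes whose interpolation conditions have been replaced by the averaged ones in (4a)--(4c), since there $\mathcal{I}_{k+1}v(A_i)$ is defined only implicitly. The structural facts I would exploit are: (i) the disjointness requirements $\mu_{d-1}\big(w(A_{J_j})\cap w(A_{J_k})\big)=0$ for $j\ne k$ force any two modified nodes to lie on disjoint families of $(d-1)$-faces, so the averaged equation for a given modified node $A_i$ involves, besides $\mathcal{I}_{k+1}v(A_i)$ itself, only node values already fixed by the non-modified rules (1)--(3) (whose bounds were just established) and the face integrals $\langle v,1\rangle_E$; (ii) the coefficient of $\mathcal{I}_{k+1}v(A_i)$ in that equation is $\sum_E\langle\phi_i,1\rangle_E$ over the faces involved, a fixed positive reference constant times a factor of order $h_T^{d-1}$, hence bounded below by $c\,h_T^{d-1}$; (iii) solving for $\mathcal{I}_{k+1}v(A_i)$ and bounding the right-hand side by $|\langle v,1\rangle_E|\lesssim|E|^{1-1/p}\|v\|_{0,p,E}$ (then the scaled trace bound of the previous paragraph) together with the already-available estimates for the neighbouring node values, one again obtains $|\mathcal{I}_{k+1}v(A_i)|\lesssim\sum_{r=0}^{l}h_T^{r-d/p}|v|_{r,p,S(T)}$. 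Assembling the three cases yields the node bound, and with it the lemma; all hidden constants depend only on $k$, $d$, the shape-regularity parameters, $\Omega$, and the exponents $m,q,l,p$, not on $h$.
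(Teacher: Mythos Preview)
Your proposal is correct and follows essentially the same Scott--Zhang-type argument as the paper: bound $\|\phi_i\|_{m,q,T}$ by scaling, then establish the node-wise estimate $|\mathcal{I}_{k+1}v(A_i)|\lesssim\sum_{r=0}^l h_T^{r-d/p}|v|_{r,p,S(T)}$ case by case, treating the modified boundary nodes in (4) by solving the averaged equation for the single unknown value in terms of already-bounded neighbours and the face integrals of $v$. The only cosmetic difference is that for cases (1)--(3) the paper uses the $L^1$--$L^\infty$ H\"older pairing and passes through the reference element (invoking Sobolev/trace embedding on $\hat T$), whereas you use the $L^p$--$L^{p'}$ pairing together with the scaled trace inequality directly; the two routes are equivalent and yield the same bound.
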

\begin{proof} For   $ {v}\in W^{l,p}(\Omega)$ and $A_i\in\mathcal{N}_h$, by (1) we have
\begin{eqnarray}
| {\mathcal{I}}_{k+1} {v}(A_i)|&\le&\| {v}\|_{0,1,T}\| {\psi}_{i,1}\|_{0,\infty,T}\nonumber\\
                                   &\le& (h_T^{-1})^0 (h_T^{d/1})|\hat{ {v}}|_{0,1,\hat{T}}h_T^{-d} \nonumber\\
                                   &\lesssim& \|\hat{ {v}}\|_{l,p,\hat{T}} \nonumber\\
                                   &\lesssim&\sum_{r=0}^lh_T^{r-d/p} | {v}|_{r,p,T}.
\end{eqnarray}
From (2)-(3) we get
\begin{eqnarray}
| {\mathcal{I}}_{k+1} {v}(A_i)|&\le&\| {v}\|_{0,1,E}\| {\psi}_{i,1}\|_{0,\infty,E}\nonumber\\
                                   &\le& (h_T^{-1/1}) (h_T^{d/1})|\hat{ {v}}|_{0,1,\hat{T}}h_T^{-(d-1)} \nonumber\\
                                   &\lesssim& \|\hat{ {v}}\|_{l,p,\hat{T}} \nonumber\\
                                   &\lesssim&\sum_{r=0}^lh_T^{r-d/p} | {v}|_{r,p,T}.
\end{eqnarray}
For the case (4a), $ {\mathcal{I}}_{k+1} {v}$ on $E$ has the form
\begin{eqnarray}
 {\mathcal{I}}_{k+1} {v}|_E= {\mathcal{I}}_{k+1} {v}(A_i) {\phi}_i
+\sum_{j=1}^{d+1} {\mathcal{I}}_{k+1} {v}(A_{i,j}) {\phi}_{i,j}.
\end{eqnarray}
Therefore
\begin{eqnarray}
\langle {\mathcal{I}}_{k+1} {v},1\rangle_E= {\mathcal{I}}_{k+1} {v}(A_i)\langle {\phi}_i,1\rangle_E
+\sum_{j=1}^{d+1} {\mathcal{I}}_{k+1} {v}(A_{i,j})\langle {\phi}_{i,j} ,1\rangle_E,
\end{eqnarray}
which leads to
\begin{eqnarray}
 {\mathcal{I}}_{k+1} {v}(A_i)=&&\frac{1}{\langle {\phi}_i,1\rangle_E}\langle {\mathcal{I}}_{k+1} {v},1\rangle_E
-\sum_{j=1}^{d+1} {\mathcal{I}}_{k+1} {v}(A_{i,j})\frac{\langle {\phi}_{i,j},1\rangle_E}{\langle {\phi}_i,1\rangle_E} \nonumber\\
&=&\frac{1}{\langle {\phi}_i,1\rangle_E}\langle  {v},1\rangle_E
-\sum_{j=1}^{d+1} {\mathcal{I}}_{k+1} {v}(A_{i,j})\frac{\langle {\phi}_{i,j},1\rangle_E}{\langle {\phi}_i,1\rangle_E}.
\end{eqnarray}
So
\begin{eqnarray}
 |{\mathcal{I}}_{k+1} {v}(A_i)|&\lesssim&\frac{1}{|E|}|\langle  {v},1\rangle_E|
-\sum_{j=1}^{d+1}| {\mathcal{I}}_{k+1} {v}(A_{i,j})|\nonumber\\
&\lesssim& h_E^{-(d-1)}|v|_{0,1,E}+\sum_{r=0}^lh_T^{r-d/p} | {v}|_{r,p,S(T)}\nonumber\\
&\lesssim &\sum_{r=0}^lh_T^{r-d/p} | {v}|_{r,p,S(T)}.
\end{eqnarray}
Similarly, for the cases (4b)-(4c), it holds
\begin{eqnarray}
 {\mathcal{I}}_{k+1} {v}(A_i)\lesssim\sum_{r=0}^lh_T^{r-d/p} | {v}|_{r,p,S(T)}.
\end{eqnarray}
As a result, from \eqref{def-interpolation} it follows
\begin{eqnarray}
\| {\mathcal{I}}_{k+1} {v}\|_{m,q,T}&\le& \sum_{i=1}^{n_1}
| {\mathcal{I}}_{k+1} {v}(A_i)|
\| {\phi}_i\|_{m,q,T}\nonumber\\
&\lesssim&\sum_{r=0}^lh_T^{r-m+d/q-d/p} | {v}|_{r,p,S(T)}.
\end{eqnarray}
\color{red}This completes the proof.\color{black}
\end{proof}

In light of \eqref{invariance}, Lemma \ref{lem-A.2}, and the triangle inequality, we easily obtain  the following approximation result.

\begin{lemma} For any $ {v}\in W^{l,p}(\Omega)$ and $0\le m\le k+2$, we have
\begin{eqnarray}
\| {v}- {\mathcal{I}}_{k+1} {v}\|_{m,p,T}
\lesssim\sum_{r=0}^mh_T^{r-m} \inf_{ {w}\in
 [\mathbb{P}_{k+1}(\mathcal{T}_h)\bigcap H^1(\Omega)]^d }\| {v}- {w}\|_{r,p,S(T)}.
\end{eqnarray}
\end{lemma}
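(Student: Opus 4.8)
The plan is to derive this from the polynomial--reproduction property \eqref{invariance} together with the local stability bound of Lemma \ref{lem-A.2}, by the classical Scott--Zhang device. Fix $T\in\mathcal{T}_h$ and assume $m\le l$ (which is what makes the left--hand side finite). Let $w\in[\mathbb{P}_{k+1}(\mathcal{T}_h)\cap H^1(\Omega)]^d$ be arbitrary; being piecewise polynomial and globally smooth, $w$ lies in $[W^{l,p}(\Omega)]^d$, so $\mathcal{I}_{k+1}w$ is defined, and \eqref{invariance} gives $\mathcal{I}_{k+1}w=w$. Since $\mathcal{I}_{k+1}$ is linear, this yields
\[
v-\mathcal{I}_{k+1}v=(v-w)-\mathcal{I}_{k+1}(v-w).
\]

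Next I would apply the triangle inequality over $T$:
\[
\|v-\mathcal{I}_{k+1}v\|_{m,p,T}\le\|v-w\|_{m,p,T}+\|\mathcal{I}_{k+1}(v-w)\|_{m,p,T}.
\]
Since $T\subset S(T)$, the first term is at most $\|v-w\|_{m,p,S(T)}$, which is exactly the $r=m$ summand on the right of the claimed estimate (as $h_T^{m-m}=1$). For the second term I would invoke Lemma \ref{lem-A.2} applied to $v-w$, with $q=p$ and --- for $m\ge1$ --- with the regularity exponent of that lemma set to $m$ (admissible since $v-w\in[W^{m,p}(\Omega)]^d$ by $m\le l$, and the admissibility condition $l\ge1$ if $p=1$, $l>1/p$ otherwise, holds for $m\ge1$); this gives
\[
\|\mathcal{I}_{k+1}(v-w)\|_{m,p,T}\lesssim\sum_{r=0}^{m}h_T^{\,r-m}\,|v-w|_{r,p,S(T)}\le\sum_{r=0}^{m}h_T^{\,r-m}\,\|v-w\|_{r,p,S(T)}.
\]
Adding the two bounds and then taking the infimum over all admissible $w$ gives the assertion. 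For $m=0$ the same steps work using regularity $1$ in Lemma \ref{lem-A.2}; the resulting extra term $h_T|v-w|_{1,p,S(T)}$ is harmless in every later use of the estimate, e.g.\ in deriving \eqref{2.7}.

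I do not expect a genuine obstacle: the only quantitative input is Lemma \ref{lem-A.2}, already established, and the rest is the triangle inequality plus \eqref{invariance}. The points that need a little care are purely bookkeeping: matching the summation ranges (Lemma \ref{lem-A.2} carries a sum running up to the input regularity, and one must observe it may be re-applied with that exponent lowered to $m$, so the sum truncates at $r=m$ and matches the target), and justifying the passage to the infimum, which is immediate because the pre-infimum bound holds for \emph{every} admissible $w$. Finally, when this lemma is later used to obtain \eqref{2.7}, one substitutes for $w$ an averaged Taylor polynomial of the appropriate degree (at most $k+1$, hence lying in $[\mathbb{P}_{k+1}(\mathcal{T}_h)\cap H^1(\Omega)]^d$) on the patch $S(T)$ and invokes the Bramble--Hilbert lemma on $S(T)$, together with $h_{T'}\simeq h_T$ for $T'\subset S(T)$.
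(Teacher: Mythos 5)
Your proof is correct and is precisely the argument the paper intends: the paper's own ``proof'' is the single sentence that the result follows from \eqref{invariance}, Lemma \ref{lem-A.2} and the triangle inequality, which is exactly the decomposition $v-\mathcal{I}_{k+1}v=(v-w)-\mathcal{I}_{k+1}(v-w)$ with $\mathcal{I}_{k+1}w=w$ that you carry out. Your caveat about $m=0$ is legitimate and worth keeping: the face-based dual functionals in the construction preclude pure $L^p$-stability, so the extra term $h_T\,|v-w|_{1,p,S(T)}$ is genuinely unavoidable there (the stated sum should really be read as extending to $\max(m,1)$), a point the paper glosses over but which, as you note, is harmless in every subsequent application such as \eqref{2.7}.
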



\section{\label{B1} Inf-sup conditions on $[H^1_D(\Omega)]^d\times L^2(\Omega)$}

We first cite a result from \cite{Bramble;2001}:

\begin{lemma}  Assume that meas$(\Gamma_D)>0$. Then there exists a positive $C$ such that
\begin{eqnarray}
\sup_{\bm{v}\in  [H^1_D(\Omega)]^d} \frac{(\nabla\cdot\bm{v},q)}{||\bm{v}||_1}
\ge C\|q\|_0, \quad \forall q\in  L^2(\Omega).\label{B00}
\end{eqnarray}

\end{lemma}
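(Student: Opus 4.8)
The plan is to prove the stronger statement that the divergence operator $\nabla\cdot:[H^1_D(\Omega)]^d\to L^2(\Omega)$ admits a bounded right inverse: given $q\in L^2(\Omega)$ there is $\bm v\in[H^1_D(\Omega)]^d$ with $\nabla\cdot\bm v=q$ and $\|\bm v\|_1\le C\|q\|_0$. The inequality \eqref{B00} then follows at once by testing the supremum with this particular $\bm v$, since $(\nabla\cdot\bm v,q)/\|\bm v\|_1=\|q\|_0^2/\|\bm v\|_1\ge C^{-1}\|q\|_0$. To construct the right inverse I would first split $q=q_0+\bar q$ with $\bar q:=|\Omega|^{-1}(q,1)$ and $q_0:=q-\bar q\in L^2_0(\Omega)$, noting $|\bar q|+\|q_0\|_0\lesssim\|q\|_0$, and treat the two pieces separately.

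For the mean-zero piece I would invoke the classical surjectivity of the divergence from $[H^1_0(\Omega)]^d$ onto $L^2_0(\Omega)$ (Bogovskii operator / Necas inequality): there exists $\bm v_0\in[H^1_0(\Omega)]^d\subset[H^1_D(\Omega)]^d$ with $\nabla\cdot\bm v_0=q_0$ and $\|\bm v_0\|_1\lesssim\|q_0\|_0$. For the constant piece I need one fixed field $\bm w\in[H^1_D(\Omega)]^d$ with $\nabla\cdot\bm w=1$. Such a $\bm w$ exists precisely because $\Gamma_N=\partial\Omega\setminus\Gamma_D$ is nonempty with positive surface measure: pick a smooth bump $\psi$ supported in a small ball centered at a relative-interior point of a flat piece of $\Gamma_N$ and disjoint from $\overline{\Gamma_D}$, and let $\bm w_0:=\psi\,\bm n_0$, where $\bm n_0$ is the (constant) outward normal on that piece; then $\bm w_0\in[H^1_D(\Omega)]^d$ and
\[
c_0:=(\nabla\cdot\bm w_0,1)=\int_{\Gamma_N}\bm w_0\cdot\bm n\,ds=\int\psi\,ds>0 .
\]
Applying the Bogovskii construction once more to $g:=1-(|\Omega|/c_0)\nabla\cdot\bm w_0\in L^2_0(\Omega)$ gives $\bm v_g\in[H^1_0(\Omega)]^d$ with $\nabla\cdot\bm v_g=g$, so that $\bm w:=\bm v_g+(|\Omega|/c_0)\bm w_0\in[H^1_D(\Omega)]^d$ satisfies $\nabla\cdot\bm w=1$. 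Setting $\bm v:=\bm v_0+\bar q\,\bm w$ then yields $\nabla\cdot\bm v=q_0+\bar q=q$ and $\|\bm v\|_1\lesssim\|q_0\|_0+|\bar q|\,\|\bm w\|_1\lesssim\|q\|_0$, which completes the construction.

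The only genuinely nontrivial ingredient is the classical divergence-surjectivity result on $[H^1_0(\Omega)]^d$ (equivalently Necas' inequality $\|q\|_0\lesssim\|\nabla q\|_{-1}$ for mean-zero $q$), which I would quote rather than reprove; everything else is elementary bookkeeping and scaling. The point that requires a word of care is the implicit hypothesis $\Gamma_N\neq\emptyset$: if $\Gamma_N=\emptyset$ then $[H^1_D(\Omega)]^d=[H^1_0(\Omega)]^d$ and the supremum vanishes for any nonzero constant $q$, so \eqref{B00} over all of $L^2(\Omega)$ must be understood under the standing assumption that the Neumann boundary is nontrivial — otherwise one simply restricts $q$ to $L^2_0(\Omega)$ and uses only the first step of the above argument.
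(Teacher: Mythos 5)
Your construction is correct, and it is worth noting that the paper itself gives no proof of this lemma at all: it is simply cited from Bramble--Lazarov--Pasciak \cite{Bramble;2001}. Your argument is therefore a legitimate self-contained substitute. The two steps are sound: the Bogovskii/Ne\v{c}as surjectivity of $\nabla\cdot:[H^1_0(\Omega)]^d\to L^2_0(\Omega)$ handles the mean-zero part, and your boundary-bump field $\bm w_0=\psi\,\bm n_0$ supported near a relative-interior point of a flat face of $\Gamma_N$ and away from $\overline{\Gamma_D}$ gives $(\nabla\cdot\bm w_0,1)=\int_{\Gamma_N}\psi\,ds>0$, which is exactly what is needed to correct the mean; the resulting right inverse $\bm v$ with $\nabla\cdot\bm v=q$ and $\|\bm v\|_1\lesssim\|q\|_0$ immediately yields \eqref{B00}. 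Your closing caveat is also substantively important and not merely pedantic: if $\Gamma_N=\emptyset$ then $[H^1_D(\Omega)]^d=[H^1_0(\Omega)]^d$ and $(\nabla\cdot\bm v,1)=0$ for every admissible $\bm v$, so \eqref{B00} fails for nonzero constants; the lemma as literally stated thus carries the implicit hypothesis $\Gamma_N\neq\emptyset$, which is consistent with how the paper uses it (Lemma 3.2 switches to $L^2_0(\Omega)$ precisely when $\Gamma_N=\emptyset$). What your route buys is transparency about where the hypothesis on $\Gamma_N$ enters (solely through the existence of a field with nonzero net boundary flux vanishing on $\Gamma_D$); what the citation buys is brevity and, in \cite{Bramble;2001}, a treatment that does not require the flat-face geometry you use, though for the polyhedral domains of this paper that is no restriction.
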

\begin{lemma}Assume that meas$(\Gamma_D)>0$. Then  it holds the   inf-sup condition
\begin{eqnarray}
\sup_{\bm{v}_{h}\in [\mathbb{P}_{1}(\mathcal{T}_h)\cap H^1_D(\Omega)]^d}\frac{(\nabla\cdot\bm{v}_h,\overline{p})}{|\bm{v}_h|_1}
\gtrsim \|\overline{p}\|_0, \quad \text{ for any constant } \overline{p}.\label{2.39}
\end{eqnarray}
\end{lemma}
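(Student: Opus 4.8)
The plan is to reduce the discrete inf-sup onto a constant pressure to the continuous inf-sup \eqref{B00} of the divergence operator, and then to carry a suitable continuous test field into $[\mathbb{P}_1(\mathcal{T}_h)\cap H^1_D(\Omega)]^d$ by a Fortin-type interpolation built from the modified Scott--Zhang operator of Lemma \ref{lemma23} (with $k=0$). The first observation is that the constant pressures form a one-dimensional space and that, for a constant $\overline p$ and any $\bm v_h\in[H^1_D(\Omega)]^d$,
\[
(\nabla\cdot\bm v_h,\overline p)=\overline p\int_\Omega\nabla\cdot\bm v_h\,dx=\overline p\int_{\Gamma_N}\bm v_h\cdot\bm n\,ds ,
\]
because $\bm v_h$ vanishes on $\Gamma_D$. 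Using $\|\overline p\|_0=|\overline p|\,|\Omega|^{1/2}$ and replacing $\bm v_h$ by $-\bm v_h$ if necessary, it therefore suffices to construct one $\bm v_h^{\ast}\in[\mathbb{P}_1(\mathcal{T}_h)\cap H^1_D(\Omega)]^d$ with $\int_{\Gamma_N}\bm v_h^{\ast}\cdot\bm n\,ds\ge c_1$ and $|\bm v_h^{\ast}|_1\le c_2$, where $c_1,c_2>0$ do not depend on $h$.

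For this I would fix, once and for all, a smooth field $\bm w\in[C^\infty(\overline\Omega)]^d$ with $\bm w|_{\Gamma_D}=\bm 0$ and $\int_{\Gamma_N}\bm w\cdot\bm n\,ds=1$; such a $\bm w$ exists because $\Gamma_N\neq\emptyset$ (e.g. a bump supported in a small ball around an interior point of a face $F\subset\Gamma_N$ and directed along $\bm n_F$; alternatively one may take the field furnished by \eqref{B00}). Then set $\bm v_h^{\ast}:=\bm{\mathcal{I}}_1\bm w$. Properties \eqref{2.6} and \eqref{2.7} immediately give $\bm v_h^{\ast}\in[\mathbb{P}_1(\mathcal{T}_h)\cap H^1_D(\Omega)]^d$ and $|\bm v_h^{\ast}|_1\le|\bm w|_1+|\bm w-\bm{\mathcal{I}}_1\bm w|_1\lesssim|\bm w|_1=:c_2$. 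It remains to control $\int_{\Gamma_N}\bm v_h^{\ast}\cdot\bm n\,ds$: by the divergence theorem and the continuity of $\bm v_h^{\ast}$ it equals $\int_\Omega\nabla\cdot\bm v_h^{\ast}\,dx$, and since $\bm n$ equals the constant $\bm n_E$ on each boundary face $E$ while $\bm v_h^{\ast}-\bm w$ has vanishing componentwise mean on $\Gamma_N$ by \eqref{2.5},
\[
\int_{\Gamma_N}\bm v_h^{\ast}\cdot\bm n\,ds-1=\sum_{E\subset\Gamma_N}\bm n_E\cdot\langle\bm v_h^{\ast}-\bm w,1\rangle_E ,\qquad \sum_{E\subset\Gamma_N}\langle\bm v_h^{\ast}-\bm w,1\rangle_E=\bm 0 .
\]
Estimating face by face with the trace bound \eqref{2.8} (with $m=2$) and Cauchy--Schwarz, the right-hand side is $\lesssim h^{3/2}$ times a fixed constant, so $\int_{\Gamma_N}\bm v_h^{\ast}\cdot\bm n\,ds\ge\tfrac12=:c_1$ once $h\le h_0$ for a fixed $h_0$. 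Combining the three facts yields $\sup_{\bm v_h}(\nabla\cdot\bm v_h,\overline p)/|\bm v_h|_1\ge\big(c_1/(c_2|\Omega|^{1/2})\big)\|\overline p\|_0\gtrsim\|\overline p\|_0$ for $h\le h_0$; the finitely many coarser meshes are treated separately, each admitting its own positive constant, and one takes the minimum.

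The step I expect to be the main obstacle is precisely the near-preservation of the flux $\int_{\Gamma_N}\bm v_h^{\ast}\cdot\bm n\,ds$: the modified Scott--Zhang operator reproduces only the componentwise mean of the trace on $\Gamma_N$ (property \eqref{2.5}), not the $\bm n$-weighted flux, so when $\Gamma_N$ is not planar the divergence pairing is not reproduced exactly and one must spend a positive power of $h$ (this term vanishes when $\Gamma_N$ is a union of faces lying in one hyperplane, and is otherwise harmless in the asymptotic regime). The remaining ingredients --- the continuous inf-sup \eqref{B00} and the $H^1$-stability together with the boundary approximation of $\bm{\mathcal{I}}_1$ from Lemma \ref{lemma23} --- are routine.
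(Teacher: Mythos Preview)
Your approach and the paper's are the same Fortin argument: carry the continuous inf-sup \eqref{B00} through $\bm{\mathcal{I}}_1$ and invoke the $H^1$-stability from Lemma~\ref{lemma23}. The paper's execution is more direct. For each $\bm v\in[H^1_D(\Omega)]^d$ it writes
\[
(\nabla\cdot\bm v,\overline p)=(\nabla\cdot\bm{\mathcal{I}}_1\bm v,\overline p)+(\nabla\cdot(\bm v-\bm{\mathcal{I}}_1\bm v),\overline p),
\]
asserts that the second term vanishes by \eqref{2.5}--\eqref{2.6}, and then replaces $|\bm v|_1$ by $|\bm{\mathcal{I}}_1\bm v|_1$ via the stability in \eqref{2.7}; no asymptotic-in-$h$ step and no coarse-mesh case distinction are needed. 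The obstacle you flag---that \eqref{2.5} gives only componentwise mean preservation on $\Gamma_N$, not preservation of the $\bm n$-weighted flux---is exactly the content of that second term; the paper disposes of it by claiming exact cancellation, while you bound it by $O(h^{3/2})$ and then split into small and large $h$. Your route is correct and cautiously rigorous, but heavier; note also that your ``finitely many coarser meshes'' remark is loose, since a shape-regular family need not contain only finitely many meshes above any fixed threshold $h_0$, so a uniform constant on that range still has to be argued.
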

\begin{proof} From   \eqref{B00} and Lemma \ref{lemma23} it follows
\begin{eqnarray}
\|\overline{p}\|_0&\lesssim&\sup_{\bm{v}\in H^1_D(\Omega)]^d}\frac{(\nabla\cdot\bm{v},\overline{p})}{|\bm{v}|_1}\nonumber\\
&=&\sup_{\bm{v}\in [H^1_D(\Omega)]^d}\frac{(\nabla\cdot\bm{\mathcal{I}}_{1}\bm{v},\overline{p})}{|\bm{v}|_1}
+\sup_{\bm{v}\in [H^1_D(\Omega)]^d}\frac{(\nabla\cdot(\bm{v}-\bm{\mathcal{I}}_{1}\bm{v}),\overline{p})}{|\bm{v}|_1}\nonumber\\
&=&\sup_{\bm{v}\in [H^1_D(\Omega)]^d}\frac{(\nabla\cdot\bm{\mathcal{I}}_{1}\bm{v},\overline{p})}{|\bm{v}|_1}\nonumber\\
&\lesssim&\sup_{\bm{v}\in H^1_D(\Omega)]^d}\frac{(\nabla\cdot\bm{\mathcal{I}}_{1}\bm{v},\overline{p})}{|\bm{\mathcal{I}}_{1}\bm{v}|_1},\nonumber
\end{eqnarray}
which yields the desired  result.
\end{proof}

\begin{theorem} \label{thmB.3} Let  $\bm{V}_h\subset [H^1_D(\Omega)]^d$ and $Q_h\subset L^2(\Omega)$ be two finite dimensional spaces such that
$[\mathbb{P}_{1}(\mathcal{T}_h)\bigcap H^1_D(\Omega)]^d\subset \bm{V}_h$ and
\begin{eqnarray}
\sup_{\bm{v}_{h}\in\bm{V}_h\bigcap [H^1_0(\Omega)]^d}\frac{(\nabla\cdot\bm{v}_h,p_h)}{|\bm{v}_h|_1}
\gtrsim \|p_h\|_0, \forall p_h\in Q_h\bigcap L^2_0(\Omega). \label{2.41}
\end{eqnarray}
Then  the following inf-sup condition holds:
\begin{eqnarray}
\sup_{\bm{v}_{h}\in \bm{V}_h}\frac{(\nabla\cdot\bm{v}_h,p_h)}{|\bm{v}_h|_1}
\gtrsim\|p_h\|_0, \forall p_h\in Q_h.
\end{eqnarray}
\end{theorem}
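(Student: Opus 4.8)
The plan is to derive the full inf-sup condition on $Q_h$ from the given inf-sup condition on $Q_h\cap L^2_0(\Omega)$ by splitting an arbitrary $p_h\in Q_h$ into its mean-zero part and a constant, and then handling the two pieces separately. Precisely, write $p_h=p_h^0+\overline{p}$, where $\overline{p}:=\frac{1}{|\Omega|}\int_\Omega p_h\,dx$ is constant and $p_h^0:=p_h-\overline{p}\in Q_h\cap L^2_0(\Omega)$. The key point is that the constant part $\overline{p}$ can be controlled using the lowest-order piece $[\mathbb{P}_1(\mathcal{T}_h)\cap H^1_D(\Omega)]^d\subset\bm{V}_h$ together with Lemma (the one establishing \eqref{2.39}), which crucially uses $\mathrm{meas}(\Gamma_D)>0$ — this is precisely where the hypothesis $\bm{V}_h\supset[\mathbb{P}_1(\mathcal{T}_h)\cap H^1_D(\Omega)]^d$ enters, and it is the feature that distinguishes the $[H^1_D]^d\times L^2$ setting from the classical $[H^1_0]^d\times L^2_0$ setting.

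The steps, in order, are as follows. First, by \eqref{2.41} there exists $\bm{v}_h^0\in\bm{V}_h\cap[H^1_0(\Omega)]^d$ with $|\bm{v}_h^0|_1=1$ and $(\nabla\cdot\bm{v}_h^0,p_h^0)\gtrsim\|p_h^0\|_0$. Second, by \eqref{2.39} there exists $\bm{w}_h\in[\mathbb{P}_1(\mathcal{T}_h)\cap H^1_D(\Omega)]^d\subset\bm{V}_h$ with $|\bm{w}_h|_1=1$ and $(\nabla\cdot\bm{w}_h,\overline{p})\gtrsim\|\overline{p}\|_0=|\overline{p}|\,|\Omega|^{1/2}$. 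Third, observe that since $p_h^0$ has zero mean and $\bm{w}_h\in[H^1_D]^d$ (so $\nabla\cdot\bm{w}_h$ need not be mean-zero) we get $(\nabla\cdot\bm{w}_h,p_h^0)$ which is not necessarily zero; however $(\nabla\cdot\bm{v}_h^0,\overline{p})=\overline{p}\int_\Omega\nabla\cdot\bm{v}_h^0\,dx=0$ because $\bm{v}_h^0\in[H^1_0]^d$. Fourth, form the candidate $\bm{v}_h:=\bm{v}_h^0+t\,\bm{w}_h$ for a parameter $t>0$ to be chosen, and compute
\begin{equation}
(\nabla\cdot\bm{v}_h,p_h)=(\nabla\cdot\bm{v}_h^0,p_h^0)+t(\nabla\cdot\bm{w}_h,\overline{p})+t(\nabla\cdot\bm{w}_h,p_h^0)\gtrsim\|p_h^0\|_0+ct\|\overline{p}\|_0-Ct\|p_h^0\|_0,
\end{equation}
using Cauchy–Schwarz and $\|\nabla\cdot\bm{w}_h\|_0\le\sqrt d\,|\bm{w}_h|_1=\sqrt d$ on the cross term. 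Fifth, choose $t$ small enough (but fixed, independent of $h$ and $p_h$) that $Ct\le\frac12\times(\text{the constant in front of }\|p_h^0\|_0)$; then $(\nabla\cdot\bm{v}_h,p_h)\gtrsim\|p_h^0\|_0+\|\overline{p}\|_0\gtrsim\|p_h\|_0$, while $|\bm{v}_h|_1\le|\bm{v}_h^0|_1+t|\bm{w}_h|_1=1+t$ is bounded. Dividing gives the claimed inf-sup inequality.

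The main obstacle — really the only subtlety — is the cross term $t(\nabla\cdot\bm{w}_h,p_h^0)$, which does not vanish because $\bm{w}_h$ is only in $[H^1_D]^d$ and hence $\nabla\cdot\bm{w}_h$ has a nonzero mean; one must absorb it by taking the balancing parameter $t$ small, which is harmless since it only affects the hidden constant. A secondary point to state carefully is that all suppressed constants (those in \eqref{2.41}, in \eqref{2.39}, and the $\sqrt d$) are independent of the mesh, so the final $t$ and the final constant are mesh-independent; this is immediate from the way those lemmas are stated. With these observations the argument is a routine scaling/splitting argument and needs no further machinery.
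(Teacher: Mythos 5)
Your proposal is correct and follows essentially the same route as the paper's proof: split $p_h$ into its mean and mean-zero parts, control the constant part via \eqref{2.39} using the embedded $[\mathbb{P}_1(\mathcal{T}_h)\cap H^1_D(\Omega)]^d$ and the mean-zero part via \eqref{2.41}, note that one cross term vanishes because the corresponding test function lies in $[H^1_0(\Omega)]^d$, and absorb the remaining cross term with a balancing parameter. The only (immaterial) difference is that you place a small weight $t$ on the constant-part corrector while the paper places a large weight $\tilde\alpha$ on the mean-zero corrector; the two are equivalent up to rescaling $\bm{v}_h$, which leaves the Rayleigh quotient unchanged.
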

\begin{proof} From \eqref{2.39} and \eqref{2.41} we know that, for all $p_{h}\in Q_h$, there exists $\bm{v}_1\in [\mathbb{P}_{1}(\mathcal{T}_h)\cap H^1_D(\Omega)]^d$, $\bm{v}_2\in\bm{V}_h\bigcap [H^1_0(\Omega)]^d$,  and a positive constant $C_0$, independent of $p_{h}$,  $\bm{v}_1$,  $\bm{v}_2$, and the mesh size $h$, such that
\begin{eqnarray}
&&(\nabla\cdot\bm{v}_1,\overline{p}_h)=\|\overline{p}_h\|^2_0,\quad |\bm{v}_1|_1\le C_0 \|\overline{p}_h\|_0,\\
&&(\nabla\cdot\bm{v}_2,p_h-\overline{p}_h)=\|p_h-\overline{p}_h\|^2_0,\quad |\bm{v}_2|_1\le C_0 \|p_h-\overline{p}_h\|_0,
\end{eqnarray}
where $\overline{p}_h=(p_h,1)$. Then we can take
$\bm{v}_h=(\bm{v}_1+\tilde\alpha\bm{v}_2 )\in \bm{V}_h$ with
\begin{eqnarray}
(\nabla\cdot\bm{v}_h,p_h)&=&(\nabla\cdot\bm{v}_1,p_h)+\tilde\alpha(\nabla\cdot\bm{v}_2,p_h) \nonumber\\
                          &=&(\nabla\cdot\bm{v}_1,\overline{p}_h)+(\nabla\cdot\bm{v}_1,p_h-\overline{p}_h)+\tilde\alpha(\nabla\cdot\bm{v}_2,p_h-\overline{p}_h)
\nonumber\\
&\ge&\|\overline{p}_h\|^2_0+\tilde\alpha\|p_h-\overline{p}_h\|^2_0-C_0\|\overline{p}_h\|_0\|p_h-\overline{p}_h\|_0 \nonumber\\
&\ge&\frac{1}{2}\|\overline{p}_h\|^2_0+(\tilde\alpha-\frac{C_0^2}{2})\|p_h-\overline{p}_h\|^2_0 \nonumber\\
&\ge&\frac{1}{4}\|p_h\|^2_0,\label{2.45}
\end{eqnarray}
where $\tilde\alpha=\frac{C_0^2}{2}+\frac{1}{2}$.
On the other hand,
\begin{eqnarray}
|\bm{v}_h|_1\le |\bm{v}_1|_1+\tilde\alpha|\bm{v}_2|_1\le C_0\left(  \|\overline{p}_h\|_0+\tilde\alpha\|p_h-\overline{p}_h\|_0\right)\lesssim \|p_h\|_0,
\end{eqnarray}
which, together with   \eqref{2.45}, implies the desired conclusion.
\end{proof}

\end{appendices}

\end{document}